\documentclass[11pt,reqno]{amsart}
\usepackage{amsmath,amsfonts,amssymb,amsthm,amscd,comment,euscript}
\usepackage[all]{xy}
\usepackage{graphicx}
\usepackage{mathptmx}
\usepackage{enumerate} 
\usepackage[colorlinks=true]{hyperref} 
\usepackage[usenames,dvipsnames]{xcolor}
\usepackage{enumitem}

\xymatrixcolsep{1.9pc}                          
\xymatrixrowsep{1.9pc}
\newdir{ >}{{}*!/-5\pt/\dir{>}}                  

 \calclayout

\swapnumbers

\theoremstyle{plain}
\newtheorem{lem}{Lemma}[section]
\newtheorem{cor}[lem]{Corollary}
\newtheorem{prop}[lem]{Proposition}
\newtheorem{thm}[lem]{Theorem}

\theoremstyle{definition}

\newtheorem{exs}[lem]{Examples}
\newtheorem{rem}[lem]{Remark}
\newtheorem{dfn}[lem]{Definition}

\renewcommand{\phi}{\varphi}
\renewcommand{\leq}{\leqslant}
\renewcommand{\geq}{\geqslant}
\renewcommand{\epsilon}{\varepsilon}

\renewcommand{\kappa}{\varkappa}

\DeclareMathOperator{\spec}{Spec}\DeclareMathOperator{\tor}{tor}
 
\DeclareMathOperator{\Gr}{Gr} 
\DeclareMathOperator{\diag}{diag} \DeclareMathOperator{\charr}{char}
\DeclareMathOperator{\hocolim}{hocolim}

 \DeclareMathOperator{\mot}{mot}

\DeclareMathOperator{\Hom}{Hom} 
 \DeclareMathOperator{\id}{id}
\DeclareMathOperator{\Fr}{Fr} \DeclareMathOperator{\Tot}{Tot}
 
\DeclareMathOperator{\colim}{colim}
 \DeclareMathOperator{\Ab}{Ab}
 
 \DeclareMathOperator{\kr}{Ker}
 \DeclareMathOperator{\im}{Im}
 \DeclareMathOperator{\coker}{Coker}
\DeclareMathOperator{\nis}{\mathsf{nis}}
 \DeclareMathOperator{\Tor}{Tor}
 
 \DeclareMathOperator{\Mod}{Mod}
\DeclareMathOperator{\modd}{mod}

\newcommand{\Shv}{\mathbf{Shv}}
\newcommand{\Sm}{\mathbf{Sm}}

\newcommand{\lra}[1]{\bl{#1}\longrightarrow\relax}
\newcommand{\bl}[1]{\buildrel #1\over}
\newcommand{\cc}{\mathcal}
\newcommand{\bb}{\mathbb}

\newcommand{\op}{{\textrm{\rm op}}}

\newcommand{\wt}{\widetilde}

\newcommand{\gmp}{\bb G}
\newcommand{\gmpn}{\bb G^{\wedge n}}
\newcommand{\uhom}{\underline{\Hom}}

\newcommand{\M}{\mathcal{M}}

\newcommand{\eff}{\mathsf{eff}}

\newcommand{\pt}{\mathsf{pt}}

\newcommand{\PP}{\mathbb{P}}
\newcommand{\w}{\wedge}

\begin{document}

\footskip30pt


\title{Computing framed motives}
\author{Grigory Garkusha}
\address{Department of Mathematics, Swansea University, Fabian Way, Swansea SA1 8EN, UK}
\email{g.garkusha@swansea.ac.uk}

\urladdr{https://ggarkusha.github.io}

\begin{abstract}
We develop methods for computing framed motives associated with motivic Thom spectra. 
Our main tool is a motivic Atiyah--Hirzebruch 
spectral sequence relating framed motives to framed motivic cohomology. As a consequence, after inverting a finite set of primes, 
the bigraded homotopy sheaves of motivic Thom spectra are computed in terms of framed motivic cohomology. We further analyze 
the symmetric-group actions inherent in framed correspondences and introduce a theory of torsion framed motivic cohomology
that yields new computational descriptions of framed motivic cohomology groups. 
These constructions lead to a category of permutation-free framed correspondences from which 
we reconstruct rational stable motivic homotopy theory.
\end{abstract}


\keywords{Framed motives, framed motivic cohomology, motivic Thom spectra, stable motivic homotopy theory}
\subjclass[2010]{14F42, 55P42}

\maketitle
\thispagestyle{empty}
\pagestyle{plain}

\tableofcontents

\section{Introduction}\label{introduction}

In stable motivic homotopy theory, the introduction of framed correspondences by Voevodsky~\cite{Voe2}
has fundamentally transformed our understanding of motivic spectra. Building on Voevodsky’s foundational notes~\cite{Voe2},
framed motives of algebraic varieties were introduced and studied in~\cite{GP1} in order to suggest a new approach to stable
motivic homotopy theory envisioned by Voevodsky in the 2000s.
Framed motives have emerged as one of the most powerful computational and conceptual tools in stable motivic homotopy theory. 
In particular, they furnish explicit fibrant models for motivic spectra.

Though the machinery of framed motives leads to explicit geometric models of stable motivic homotopy theory, effective methods 
for computing them have remained limited. The purpose of this paper is to develop new computational tools for framed motives.
Our first main result is the construction of a motivic Atiyah–Hirzebruch spectral sequence for framed motives (see Theorem~\ref{ovechkin}). For a motivic Thom 
spectrum $E$ such as the motivic sphere spectrum $S_T$ or the algebraic cobordism spectrum $MGL$, 
this spectral sequence relates the homotopy groups of $E$-framed motives to $E$-framed motivic cohomology. 

Among the most fundamental invariants in motivic homotopy theory are the stable motivic homotopy groups
$\pi_{*,\star}^{\bb A^1}(S_T)$ of the motivic 
sphere spectrum $S_T$. Their computation is at least as challenging as that of the 
stable homotopy groups of the classical sphere spectrum $\bb S$ (see, e.g.,~\cite{Lev}), 
while simultaneously encoding rich arithmetic information unavailable in the topological setting.
In Theorem~\ref{mathssthom} we construct a motivic Atiyah--Hirzebruch spectral sequence for  a motivic Thom spectrum $E$.
In particular, this spectral sequence provides a precise link between the stable motivic homotopy groups of $S_T$, 
the stable homotopy groups of $\bb S$, and framed motivic cohomology,
thereby relating topological and motivic invariants through a common computational framework.

At present, our understanding of the stable motivic homotopy groups $\pi_{*,\star}^{\bb A^1}(S_T)$ 
remains rather limited; we refer the reader to~\cite{RS} for a survey of the most recent developments.  
These groups encode highly subtle arithmetic and geometric information, much of which continues to elude existing computational techniques.
One of the principal motivations for developing a motivic Atiyah--Hirzebruch spectral sequence is to obtain new tools for studying such phenomena.
As an application of Theorems~\ref{ovechkin} and~\ref{mathssthom}, we show that, after inverting a finite 
set of primes determined by the stable homotopy groups of spheres, 
the bigraded stable motivic homotopy sheaves of a motivic Thom spectrum $E$ can be identified with 
$E$-framed motivic cohomology groups.

The motivic Atiyah--Hirzebruch spectral sequence, constructed in Section~\ref{mathssmass}, establishes its 
relationship with framed motivic cohomology. It is therefore important to compute the corresponding cohomology groups. 
To this end, we develop in Section~\ref{nikishin} additional 
tools for such computations by exploiting the symmetric-group actions that are intrinsic to the theory of framed correspondences.

Namely, the presence of permutation symmetries in framed correspondences suggests the existence of a richer algebraic 
structure than is visible at the level of ordinary framed motivic cohomology. We show that these symmetries give rise 
to torsion phenomena controlled by symmetric groups and lead naturally to a theory of torsion framed motivic 
cohomology introduced in Section~\ref{nikishin}. This viewpoint allows us to isolate the 
contribution of permutation actions, obtain new descriptions of 
framed motivic cohomology groups (see Theorem~\ref{rakushka}).
By Theorem~\ref{shell} permutations are rationally invisible --- they contribute only torsion, captured precisely by the
$\Tor_p$-groups of Theorem~\ref{rakushka}.


The results of Section~\ref{nikishin} also reveal that a substantial part of the homotopy-theoretic information carried by 
framed correspondences is encoded in structures that remain visible after eliminating permutation phenomena. 
In particular, the theory of permutation-free framed correspondences developed in Section~\ref{nikishin} suggests 
that symmetric-group actions play a secondary role from the perspective of rational stable motivic homotopy theory. 
Namely, our final result (see Theorem~\ref{reconstr}) shows that permutation-free framed correspondences suffice to recover rational stable motivic homotopy 
theory. More precisely, we use the category of rational permutation-free framed correspondences and investigate 
the associated category of motives. We prove that the resulting motivic category is equivalent to the rational stable 
motivic homotopy category. This provides a new model for $SH(k)_{\bb Q}$ in which we can sacrifice the 
combinatorial complexity arising from permutation actions, while the essential homotopical information 
is preserved. The reconstruction theorem may therefore be viewed as a conceptual explanation for the computational results obtained 
in the previous sections and as evidence that permutation-free framed correspondences capture the core 
structure of rational motivic phenomena.
It is worth mentioning that the new model for $SH(k)_{\bb Q}$ is independent of~\cite{GG19}, where $SH(k)_{\bb Q}$ was recovered out of 
Milnor--Witt correspondences in the sense of~\cite{CF}. Also, we do not assume here that the base field
$k$ is of characteristic different from two in contrast with the reconstruction theorem of~\cite{GG19}.

\subsubsection*{Notation.}
Throughout the paper we employ the following notation.\label{ntn}
\vspace{0.08in}

\begin{tabular}{l|l}
$k$ and $\pt$ & a field and $\mathrm{Spec}(k)$, respectively\\
$\Sm_k$ & the category of smooth separated $k$-schemes of finite type \\
$\Fr_0(k)$  & the category of framed correspondences of level zero\\
$\Shv_{\bullet}(\Sm_k)$ or $\Shv_{\bullet}$ & the closed symmetric monoidal category of pointed Nisnevich sheaves \\
$\M=\Delta^{\op}\Shv_{\bullet}$ & the category of pointed motivic spaces,\\
{}& a.k.a. the category of~pointed simplicial  Nisnevich sheaves \\
$Sp_{S^1}$ & the category of $S^1$-spectra of pointed simplicial sets\\
$Sp_{S^1}(k)$ & the category of $S^1$-spectra of pointed motivic spaces
\end{tabular}

We also denote by $\Sm'_k$ the category of essentially smooth $k$-schemes. Following~\cite{GrD},
an essentially smooth $k$-scheme is a Noetherian $k$-scheme
$X$ that can be expressed as the inverse limit of a filtered inverse system
$(X_i)_{i\in I}$, where each transition morphism $X_i \to X_j$ being
an \'{e}tale affine morphism between smooth $k$-schemes.
 
We shall freely use the main result of~\cite{GP4} --- complemented by~\cite{DP} in
characteristic 2 and by~\cite[Theorem~B.32]{DKO} over finite fields --- which asserts that if $k$ is perfect then 
for any $\bb A^1$-invariant quasi-stable $\bb ZF_*(k)$-presheaf of Abelian
groups $\mathcal F$, the associated Nisnevich sheaf $\mathcal
F_{\nis}$ is strictly $\bb A^1$-invariant. 
If $\mathcal C$ is a category
cotensored over the category of pointed motivic spaces $\mathcal M$, we write
$\uhom(A,C)\in\mathcal C$ for the cotensor object associated with $A\in\mathcal M$ and $C\in\mathcal C$, unless stated otherwise.
For $\cc X\in\cc M$, the Suslin complex is the motivic space $C_*(\cc X):=\diag(\uhom_{\cc M}(-\times\Delta_k^\bullet,\cc X))$, where
$\Delta^n_k=\spec(k[x_0,\ldots,x_n]/(x_0+\cdots+x_n-1))$ denotes the algebraic $n$-simplex.
Throughout the paper, we work with the flasque local and motivic model structures on motivic spaces, and on 
$S^1$- and $T$-spectra of motivic spaces, in the sense of~\cite{Is}.


\section{Framed correspondences and their $\Gamma$-spaces}\label{hutson}



The main objective of this section is to analyze the homotopy-theoretic structure of Segal $\Gamma$-spaces
associated with framed correspondences. They will be necessary
for the construction of the motivic Atiyah--Hirzebruch spectral sequence. We study $S^1$-spectra arising from 
framed correspondences and obtain explicit descriptions of their stable homotopy types. These results constitute 
the technical input for the spectral sequence constructions of Section~\ref{mathssmass}.

The starting point is Voevodsky’s description of morphisms between certain Nisnevich sheaves in terms 
of geometric data. This fundamental identification allows one to pass freely between sheaf-theoretic 
and geometric formulations and serves as the principal computational tool in the machinery of framed motives~\cite{GP1}.

The proof of Voevodsky's Lemma can be found in~\cite[Section~3]{GP1}.

\begin{lem}[Voevodsky's Lemma]\label{Voevlemma}
For $X,Y\in\Sm_k$ and a closed subset $X'$ of $X$ and open subset $V$ of $Y$ the set
   \[\Hom_{\Shv_{\bullet}}(X/X',Y/V)\]
is in a natural bijection with the set of equivalence classes of
triples $\Phi=(U,Z,\phi)$, where $Z$ is a closed subset of $X$ disjoint
with $X'$, $U$ is an \'etale neighborhood of $Z$ in $X$ and
$\phi\colon U\to Y$ is a regular map such that $\phi^{-1}(Y-V)=Z$.
By definition, two triples $\Phi=(U,Z,\phi)$ and $\Phi'=(U',Z',\phi')$ are {\it
equivalent\/} if $Z=Z'$ and $\phi,\phi'$ coincide on some common
etale neighbourhood of $Z$ in $X$. 
\end{lem}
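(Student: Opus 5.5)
The plan is to compute both sides as colimits and to reduce everything to the description of the sections of $Y/V$ over henselian local schemes. Here $X/X'$ and $Y/V$ denote Nisnevich sheaf quotients of representables: $X/X'$ is the quotient of $X$ by the subsheaf $X'$, taken with its reduced structure, or equivalently by the image of $X'$, and $Y/V$ is the quotient of $Y$ by its open subscheme $V$. A morphism $X/X'\to Y/V$ is the same as a section $f\in (Y/V)(X)$ whose restriction to $X'$ is the basepoint. So the first step is to understand $(Y/V)(X)$.

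Write $F$ for the presheaf quotient $Y/V$, so that $F(S)=\Hom(S,Y)/\Hom(S,V)$ with all maps into $V$ collapsed to the basepoint. Then $Y/V$ is the Nisnevich sheafification $F_{\nis}$.

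I would first show that the following assignment is already a Nisnevich sheaf. To $S\in\Sm_k$ it assigns the set $G(S)$ of equivalence classes of triples $(U,Z,\phi)$, where $Z\subset S$ is closed, $U\to S$ is an étale neighbourhood of $Z$, and $\phi\colon U\to Y$ satisfies $\phi^{-1}(Y-V)=Z$. Two triples are equivalent if they have the same $Z$ and agree on a smaller étale neighbourhood; the basepoint corresponds to $Z=\emptyset$.

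Functoriality is by pullback: for $g\colon S'\to S$ we take $g^{-1}Z$, the neighbourhood $U\times_S S'$, and $\phi\circ\mathrm{pr}$.

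There is an evident map $F\to G$ sending $\phi\colon S\to Y$ to the triple $(S,\phi^{-1}(Y-V),\phi)$. I would check that this map is an isomorphism on stalks at henselian local schemes $S=\spec\cc O^h$.
\begin{itemize}
\item If $Z$ is empty, both sides give the basepoint.
\item If $Z\neq\emptyset$, then $Z$ contains the closed point. Since a henselian local scheme admits a section of any étale neighbourhood of its closed point, $\phi$ restricts to a map $S\to Y$ with $\phi^{-1}(Y-V)=Z$. This gives surjectivity.
\item Injectivity follows because two maps $S\to Y$ agreeing on an étale neighbourhood of the closed point agree on $S$, by the same sectioning argument.
\end{itemize}
Hence $F_{\nis}\cong G$, provided $G$ is a Nisnevich sheaf.

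The sheaf property of $G$ is the main obstacle. Given a Nisnevich cover $\{S_i\to S\}$ and compatible triples $(U_i,Z_i,\phi_i)$, the closed sets $Z_i$ descend to a closed set $Z\subset S$, since closed subsets satisfy étale descent. The germs of $\phi_i$ along $Z_i$ must then be glued into one étale neighbourhood of $Z$ in $S$. I would do this by working with the henselization-type pro-object, i.e. germs along $Z$, and using that étale neighbourhoods of $Z$ over the cover, together with their agreement on overlaps, define an étale sheaf over $S$ near $Z$. Concretely, one takes the algebraic space obtained by gluing the $U_i$ along the common refinements on which the $\phi_i$ agree. This space is étale over $S$, and an isomorphism over $Z$ is forced by the Nisnevich condition that points lift with trivial residue extension. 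Since the space is only needed near $Z$, one can replace it by a scheme étale neighbourhood, and $\phi$ glues. This is exactly where the Nisnevich (rather than étale) topology and Zariski's main theorem enter; Mayer–Vietoris for elementary distinguished squares suffices, which simplifies the gluing to two pieces.

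Finally, the basepoint condition on $X'$ says that $Z\cap X'=\emptyset$. Naturality in $X$, $X'$, $Y$, $V$ is clear from the construction, which gives the bijection claimed in Lemma~\ref{Voevlemma}.
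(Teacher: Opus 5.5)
Your overall strategy is a sound way to prove the lemma: compare the presheaf quotient $F$ with the presheaf $G$ of triples on henselian local schemes, and show that $G$ is already a Nisnevich sheaf. The paper gives no argument of its own and only refers to~\cite[Section~3]{GP1}. Your stalk computation is fine.

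\textbf{The gap is the sheaf property of $G$.} This is the real content of the lemma, and your sketch does not establish it.

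First, ``gluing the $U_i$ along the common refinements'' cannot be a Zariski gluing. A common refinement maps to the two pulled-back neighbourhoods by \'etale maps that are not open immersions near $Z$, and no shrinking makes them so. For example, take $\charr k\neq 2$ and $U'=\{y^2=1+t\}\smallsetminus\{(0,-1),(-1,0)\}\to\bb A^1_t$. It refines $\bb A^1_t$ as an \'etale neighbourhood of $t=0$, but every non-empty open of $U'$ maps generically two-to-one.

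Read correctly, your construction is a quotient. Shrink each $U_i$ so that $U_i\to S_i$ is an isomorphism over $Z_i$, put $\mathcal W=\bigsqcup_i U_i$, and let $R^\flat\subset\mathcal W\times_S\mathcal W$ be the locus where the two pullbacks of the $\phi_i$ agree. Compatibility of the triples gives agreement near every point lying over $Z$, and $R^\flat$ is an \'etale equivalence relation.

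The decisive point is then that $\mathcal W/R^\flat$ is a \emph{scheme} \'etale neighbourhood of $Z$, and this is exactly what you wave through. ``It is only needed near $Z$, so replace it by a scheme'' is not an argument: an \'etale atlas gives sections over $Z$ only locally on $Z$, never a closed embedding of all of $Z$. Passing to the henselization $S^h_Z$ is not a shortcut either. Henselization along $Z$ does not commute with \'etale, or even Zariski-open, base change in general, so germs along $Z_i$ on $S_i$ need not come from $S_i\times_S S^h_Z$.

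\textbf{The missing idea is separatedness, and it comes from smoothness.}
\begin{itemize}
\item Every \'etale $S$-scheme is smooth over $k$, so its connected components are irreducible.
\item Since $Y$ is separated, two maps to $Y$ that agree on a non-empty open of such a component agree on all of it.
\item Hence $R^\flat$ is a union of connected components of $\mathcal W\times_S\mathcal W$, i.e.\ open \emph{and closed}. The same irreducibility argument, applied on the triple product, gives transitivity.
\item So $\mathcal Q=\mathcal W/R^\flat$ is an algebraic space \'etale and separated over $S$. By Knutson's theorem (a separated, locally quasi-finite algebraic space over a scheme is a scheme), $\mathcal Q$ is a scheme. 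This, and not anything about neighbourhoods of $Z$, is where Zariski's main theorem enters.
\item Since $R^\flat$ contains all of $(\mathcal W\times_S\mathcal W)\times_S Z$, descent along $\mathcal W\times_S Z\to Z$ gives $\mathcal Q\times_S Z\cong Z$. So the isomorphism over $Z$ comes from the compatibility of the triples, not from the Nisnevich condition.
\item Therefore $Z$ embeds as a closed section of $\mathcal Q$, and $\phi$ descends to $\mathcal Q$ with $\phi^{-1}(Y-V)=Z$.
\end{itemize}

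\textbf{A smaller point.} Translating ``trivial on $X'$'' into $Z\cap X'=\emptyset$ needs every point of $X'$ to lie in the image of some smooth scheme mapping to $X'$. This holds, for instance, if $k$ is perfect, and deserves a sentence.
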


Under the bijection of Lemma~\ref{Voevlemma} we shall often identify elements in $\Hom_{\Shv_{\bullet}}(X/X',Y/V)$ with 
associated triples $(U,Z,\phi)$. Denote by $\hom_{\Shv_{\bullet}}(X/X',Y/V)$ the subset of $\Hom_{\Shv_{\bullet}}(X/X',Y/V)$
of those triples $(U,Z,\phi)$  such that $Z$ is connected. It is pointed at the triple $0:=(U,Z,\phi)$ with $U=\emptyset$.

Given a pointed set $(K,*)$, the motivic Thom space $Y/V\wedge K$ equals the motivic Thom space $(\sqcup_{K\setminus *}Y)/(\sqcup_{K\setminus *}V)$.
The association
   $$K\in\Gamma^{\op}\longmapsto\Hom_{\Shv_{\bullet}}(X/X',Y/V\wedge K)$$
defines a $\Gamma$-space denoted by $\Gamma\Hom_{\Shv_{\bullet}}(X/X',Y/V)$. The $\Gamma$-space
$\Gamma\hom_{\Shv_{\bullet}}(X/X',Y/V)$ is defined in a similar fashion. 

Similarly to~\cite[Appendix~B]{GNP} one has
   $$\Gamma\hom_{\Shv_{\bullet}}(X/X',Y/V)=\bigvee_{\Phi\in\hom_{\Shv_{\bullet}}(X/X',Y/V)\setminus 0}\Gamma\hom_{\Shv_{\bullet}}(X/X',Y/V)_{\Phi}.$$
Here $\Gamma\hom_{\Shv_{\bullet}}(X/X',Y/V)_{\Phi}$  is the $\Gamma$-subspace
$K\in\Gamma^{\op}\mapsto\{(U,Z,\phi,g)\mid\Phi=(U,Z,\phi),g:U\to K\setminus*\}\sqcup\{0\}$. Note that it is isomorphic to $\Gamma(1_+,-)$.

Thus the following fact is true.

\begin{lem}\label{dorofeev}
There is a natural isomorphism of $\Gamma$-spaces
   $$\Gamma\hom_{\Shv_{\bullet}}(X/X',Y/V)\cong\bigvee_{\Phi\in\hom_{\Shv_{\bullet}}(X/X',Y/V)\setminus 0}\Gamma(1_+,-).$$
\end{lem}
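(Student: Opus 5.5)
The plan is to compute the $\Gamma$-space levelwise via Voevodsky's Lemma, check that the resulting pointed sets split naturally in $K$ into pieces indexed by $\Phi$, and then identify each piece with $\Gamma(1_+,-)$.

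Fix a finite pointed set $(K,*)$. By Lemma~\ref{Voevlemma}, an element of $\Hom_{\Shv_{\bullet}}(X/X',Y/V\wedge K)$ is an equivalence class of triples $(U,Z,\psi)$, with $\psi\colon U\to\sqcup_{K\setminus *}Y$ and $\psi^{-1}(\sqcup_{K\setminus*}(Y-V))=Z$. Since $\sqcup_{K\setminus*}Y$ is a disjoint union, $\psi$ amounts to two pieces of data: a decomposition $U=\sqcup_{a\in K\setminus *}U_a$ (equivalently a locally constant map $g\colon U\to K\setminus *$), and maps $\phi_a\colon U_a\to Y$. Write $\phi=\sqcup\phi_a\colon U\to Y$, so that $\psi=(\phi,g)$.

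Next restrict to the subsets $\hom$, i.e. require $Z$ connected. After shrinking the étale neighbourhood $U$ (which is allowed by the equivalence relation), we may assume every connected component of $U$ meets $Z$. Since $Z$ is connected, $g$ is constant on $Z$ and hence constant on $U$. Forgetting $g$ therefore gives a triple $\Phi=(U,Z,\phi)\in\hom_{\Shv_{\bullet}}(X/X',Y/V)\setminus 0$ together with a point $g\in K\setminus *$. I expect this step to be the main obstacle. One must show that this assignment is well defined on equivalence classes: equivalent triples coincide on a common étale neighbourhood of $Z$, so they have the same $Z$, the same $\phi$-class and the same label $g$. One must also show it is bijective; the inverse sends $(\Phi,a)$ to the triple $(U,Z,\iota_a\circ\phi)$, where $\iota_a$ is the inclusion of the $a$-th copy of $Y$.

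This produces a bijection
$$\hom_{\Shv_{\bullet}}(X/X',Y/V\wedge K)\cong \{0\}\sqcup\bigl((\hom_{\Shv_{\bullet}}(X/X',Y/V)\setminus 0)\times (K\setminus *)\bigr)=\bigvee_{\Phi}(K\setminus *)_+ ,$$
with $0$ corresponding to the basepoint (the triple with $U=\emptyset$). Each wedge summand is exactly the $\Gamma$-subspace $\Gamma\hom_{\Shv_{\bullet}}(X/X',Y/V)_\Phi$. Moreover $(K\setminus *)_+ = K \cong \Gamma(1_+,K)$, naturally in $K$.

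It remains to check naturality in $K$. A pointed map $f\colon K\to L$ acts on a triple by postcomposing $\psi$ with $\sqcup_{K\setminus*}Y\to\sqcup_{L\setminus *}Y$. Under the identification above, this sends $(\Phi,a)$ to $(\Phi,f(a))$ when $f(a)\neq *$. When $f(a)=*$ it sends $(\Phi,a)$ to $0$, because the preimage of $Y-V$ then becomes empty and we may take $U=\emptyset$. This is precisely the functoriality of $\Gamma(1_+,-)$ applied summandwise. Hence the levelwise bijections assemble into the claimed natural isomorphism of $\Gamma$-spaces.
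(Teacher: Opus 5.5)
Your proposal is correct and follows the same route as the paper: levelwise, Voevodsky's Lemma turns an element of $\hom_{\Shv_{\bullet}}(X/X',Y/V\wedge K)$ into a triple $\Phi=(U,Z,\phi)$ together with a label $g\colon U\to K\setminus *$, the $\Gamma$-space splits as a wedge of the subfunctors $\Gamma\hom_{\Shv_{\bullet}}(X/X',Y/V)_\Phi$, and each summand is identified with $\Gamma(1_+,-)$. The paper only sketches this by reference to the appendix of Garkusha--Neshitov--Panin. You make explicit the point the paper leaves implicit: connectedness of $Z$, after shrinking $U$ to the component containing $Z$, forces $g$ to be constant. You also check naturality in $K$.
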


In turn --- similarly to~\cite[Appendix~B]{GNP},
  $$\Gamma\Hom_{\Shv_{\bullet}}(X/X',Y/V)=\bigcup_{\Psi\in\Hom_{\Shv_{\bullet}}(X/X',Y/V)\setminus 0}\Gamma\Hom_{\Shv_{\bullet}}(X/X',Y/V)_{\Psi}.$$
Here $\Gamma\Hom_{\Shv_{\bullet}}(X/X',Y/V)_{\Psi}$ is defined as follows.
Given $\Phi=(U,Z,\phi)$ and $\Phi'=(U',Z',\phi')$ in $\Hom_{\Shv_{\bullet}}(X/X',Y/V)$, we write $\Phi'\leq \Phi$ if there is a closed subset $Z''$
in $X$ such that $Z=Z'\sqcup Z''$ and
   $$(U',Z';\phi')=(U-Z'',Z';\phi|_{U-Z''}) \in\Hom_{\Shv_{\bullet}}(X/X',Y/V).$$
By definition,
   $$\Gamma\Hom_{\Shv_{\bullet}}(X/X',Y/V)_{\Psi}(K):=\{(U,Z,\phi,g:U\to K\setminus*)\in\Hom_{\Shv_{\bullet}}(X/X',Y/V\wedge K)\mid(U,Z,\phi)\leq\Psi\}.$$
   
We set, 
   $$\Gamma'\Hom_{\Shv_{\bullet}}(X/X',Y/V)_{\Psi}:=\Gamma\Hom_{\Shv_{\bullet}}(X/X',Y/V)_{\Psi}\cap \Gamma\hom_{\Shv_{\bullet}}(X/X',Y/V).$$
If $\Psi=(U,Z,\psi)$ then the $\Gamma$-space $\Gamma\Hom_{\Shv_{\bullet}}(X/X',Y/V)_{\Psi}$
is isomorphic to $\Gamma(cc(Z)_+,-)$, where $cc(Z)$ is the finite set of connected components of $Z$. 
By construction, $\Gamma'\Hom_{\Shv_{\bullet}}(X/X',Y/V)_{\Psi}$
is isomorphic to the $\Gamma$-space $\bigvee_{cc(Z)}\Gamma(1_+,-)$.

Recall from~\cite[Section~4]{BF} that any $\Gamma$-space $A$ can be extended to a functor $A:Sp_{S^1}\to Sp_{S^1}$, which is defined by the rule
   $$A(M)=\int^{n_+\in\Gamma^{\op}}M^{\times n}\wedge A(n_+),\quad M\in Sp_{S^1}.$$
Note that $A$ preserves filtered colimits of spectra. Also, $(\colim_IA_i)(M)=\colim_IA_i(M)$ for any filtered set of indices $I$. 

Let $\Hom_{\Shv_{\bullet}}^{M}(X/X',Y/V))$ and 
$\hom_{\Shv_{\bullet}}^{M}(X/X',Y/V))$ denote the corresponding evaluations of the $\Gamma$-spaces
$\Gamma\Hom_{\Shv_{\bullet}}(X/X',Y/V))$ and $\Gamma\hom_{\Shv_{\bullet}}(X/X',Y/V))$ at $M\in Sp_{S^1}$.

\begin{thm}\label{protas}
The inclusion of $\Gamma$-spaces
   $$\Gamma\hom_{\Shv_{\bullet}}(X/X',Y/V)\hookrightarrow\Gamma\Hom_{\Shv_{\bullet}}(X/X',Y/V)$$
induces a stable equivalence of spectra 
   $$\hom_{\Shv_{\bullet}}^{M}(X/X',Y/V)\xrightarrow{\sim}\Hom_{\Shv_{\bullet}}^{M}(X/X',Y/V)$$
for any $M\in Sp_{S^1}$. Moreover, for any $n\in\bb Z$ the $n$-th stable homotopy group 
$\pi_n(\Hom_{\Shv_{\bullet}}^{M}(X/X',Y/V))$ of the $S^1$-spectrum
$\Hom_{\Shv_{\bullet}}^{M}(X/X',Y/V)$ is isomorphic to $\bb Z\hom_{\Shv_{\bullet}}(X/X',Y/V)\otimes\pi_n(M)$, where
$\pi_n(M)$ is the $n$-th stable homotopy group of $M$.
\end{thm}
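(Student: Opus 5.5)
The plan is to reduce everything to a filtered colimit over the elements $\Psi\in\Hom_{\Shv_{\bullet}}(X/X',Y/V)\setminus 0$ and to check the claim on each piece, where the $\Gamma$-spaces are explicit. The relation $\leq$ makes $\Hom_{\Shv_{\bullet}}(X/X',Y/V)$ a directed set. Given $\Psi_1=(U_1,Z_1,\psi_1)$ and $\Psi_2=(U_2,Z_2,\psi_2)$, take the union of the supports and glue the étale neighbourhoods where they agree. If the supports intersect but the germs differ, directedness fails as stated. So I would instead index by finite subsets $S$ of $\hom_{\Shv_{\bullet}}(X/X',Y/V)\setminus 0$ for which $\Psi_S$ exists, namely those with pairwise disjoint supports. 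This is still filtered after intersecting with the union description, and any $\Psi$ is uniquely $\Psi_{cc(Z)}$.

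Concretely, I would write
\[
\Gamma\Hom_{\Shv_{\bullet}}(X/X',Y/V)=\colim_{\Psi}\Gamma\Hom_{\Shv_{\bullet}}(X/X',Y/V)_{\Psi},\qquad
\Gamma\hom_{\Shv_{\bullet}}(X/X',Y/V)=\colim_{\Psi}\Gamma'\Hom_{\Shv_{\bullet}}(X/X',Y/V)_{\Psi},
\]
with the colimits being unions of subfunctors. Since evaluation at $M$ commutes with filtered colimits, as noted after \cite{BF}, and stable homotopy groups commute with filtered colimits of spectra, it suffices to prove the statement levelwise in $\Psi$, naturally in $\Psi$. For fixed $\Psi=(U,Z,\psi)$ with $n=|cc(Z)|$, the inclusion is the map
\[
\bigvee_{n}\Gamma(1_+,-)\hookrightarrow\Gamma(n_+,-),
\]
which is induced by the $n$ inclusions $1_+\to n_+$.

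Evaluating at $M$ gives $\Gamma(n_+,-)(M)=M^{\times n}$ by the coend Yoneda lemma. The map then becomes the canonical inclusion $\bigvee_n M\to\prod_n M$, which is a stable equivalence of spectra because finite wedges and finite products agree stably. This yields the stable equivalence at each stage, and after the colimit
\[
\pi_n\bigl(\Hom^M_{\Shv_{\bullet}}(X/X',Y/V)\bigr)\cong\pi_n\Bigl(\bigvee_{\Phi\in\hom\setminus0}M\Bigr)\cong\bigoplus_{\hom\setminus 0}\pi_n(M)=\bb Z\hom_{\Shv_{\bullet}}(X/X',Y/V)\otimes\pi_n(M),
\]
using Lemma~\ref{dorofeev} together with the fact that stable homotopy commutes with arbitrary wedges. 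Here $\bb Z\hom$ means the reduced free abelian group, i.e. the one with the base point $0$ killed.

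The main obstacle is the bookkeeping in the colimit. One must check that the transition maps for $\Psi'\leq\Psi$ correspond to the evident inclusions $\Gamma(cc(Z')_+,-)\to\Gamma(cc(Z)_+,-)$ and are compatible with the wedge inclusions. One must also check that the index category is genuinely filtered, or replace it by the poset of finite families of connected-support triples with disjoint supports. Once this is set up, the argument is the formal reduction to $\bigvee\to\prod$.
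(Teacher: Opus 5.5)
Your treatment of each piece matches the paper's proof. For $\Psi=(U,Z,\psi)$ the inclusion $\Gamma'\Hom_{\Shv_{\bullet}}(X/X',Y/V)_{\Psi}\hookrightarrow\Gamma\Hom_{\Shv_{\bullet}}(X/X',Y/V)_{\Psi}$ is $\bigvee_{cc(Z)}\Gamma(1_+,-)\hookrightarrow\Gamma(cc(Z)_+,-)$. This evaluates to the stable equivalence $\bigvee_{cc(Z)}M\to M^{\times cc(Z)}$, and the homotopy groups are then read off from the wedge of Lemma~\ref{dorofeev}.

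The gap is in passing from the pieces to the union. You justify it by saying the union is a filtered colimit, and that is false. Your replacement index, finite families of connected-support triples with pairwise disjoint supports, is order-isomorphic to $(\Hom_{\Shv_{\bullet}}(X/X',Y/V),\leq)$ via $\Psi\mapsto\{\text{components of }\Psi\}$. Your own remark that $\Psi=\Psi_{cc(Z)}$ shows this, so the new index has the same defect.

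For example, take $X=Y=\mathbb A^1$, $X'=\emptyset$, $V=\mathbb A^1-0$. The triples $(\mathbb A^1,\{0\},t)$ and $(\mathbb A^1,\{0\},t^2)$ are distinct elements of $\hom_{\Shv_{\bullet}}(X/X',Y/V)\setminus 0$. No $\Psi$ dominates both, since each would have to be the restriction of $\Psi$ to the same component $\{0\}$ of $Z_\Psi$.

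So the union is a colimit over a non-filtered poset, glued along intersections of pieces. An ordinary colimit of objectwise stable equivalences over such a diagram need not be a stable equivalence, so ``it suffices to prove the statement levelwise in $\Psi$'' is not justified. If you instead index by all finite subsets of $\hom_{\Shv_{\bullet}}(X/X',Y/V)\setminus 0$, the system is filtered. But then the pieces are no longer of the form $\Gamma(n_+,-)$; they are unions of products glued along smaller products, and the same problem reappears inside each piece. The paper's proof makes the same reduction and just writes ``It follows that'' here, but an argument is needed.

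A clean repair is to filter by the number of components rather than by $\Psi$. Let $F_r\subseteq\Gamma\Hom_{\Shv_{\bullet}}(X/X',Y/V)$ consist of the triples whose support has at most $r$ connected components.
\begin{itemize}
\item $F_r$ is a $\Gamma$-subspace, because a pointed map $K\to L$ can only discard components.
\item $F_1=\Gamma\hom_{\Shv_{\bullet}}(X/X',Y/V)$, and $\Gamma\Hom_{\Shv_{\bullet}}(X/X',Y/V)=\bigcup_rF_r$ is a genuine sequential union.
\item Evaluated at $M$, each $F_{r-1}(M)\hookrightarrow F_r(M)$ is a levelwise monomorphism with quotient $\bigvee_{\Psi,\,|cc(Z_\Psi)|=r}M^{\wedge r}$. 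Here $(M^{\wedge r})_n=M_n^{\wedge r}$, and the structure maps factor through the diagonal $S^1\to(S^1)^{\wedge r}$.
\item For $r\geq 2$ this diagonal is null-homotopic. Hence the transition maps in $\colim_n\pi_{n+k}(M_n^{\wedge r})$ vanish, and $M^{\wedge r}$ is stably contractible.
\item The long exact sequences of these levelwise cofibrations show that every $F_1(M)\to F_r(M)$ is a stable equivalence.
\end{itemize}
Passing to the sequential colimit gives the stable equivalence $\hom^M_{\Shv_{\bullet}}(X/X',Y/V)\to\Hom^M_{\Shv_{\bullet}}(X/X',Y/V)$. Your computation of $\pi_n$ then goes through unchanged, and this route does not even need the $\bigvee\to\prod$ comparison.

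Alternatively, after a cofibrant replacement of $M$, note that $\Psi\mapsto M^{\times cc(Z_\Psi)}$ is a diagram on a direct poset whose latching maps are the fat-wedge inclusions, which are cofibrations. Its colimit is therefore a homotopy colimit, and homotopy colimits do preserve objectwise stable equivalences.
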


\begin{proof}
By construction, one has 
   \begin{multline*}
    \Gamma\hom_{\Shv_{\bullet}}(X/X',Y/V)=\bigvee_{\Phi\in\hom_{\Shv_{\bullet}}(X/X',Y/V)\setminus 0}\Gamma\hom_{\Shv_{\bullet}}(X/X',Y/V)_{\Phi}=\\
    \bigcup_{\Psi\in\Hom_{\Shv_{\bullet}}(X/X',Y/V)\setminus 0}\Gamma'\Hom_{\Shv_{\bullet}}(X/X',Y/V)_{\Psi}\hookrightarrow
    \bigcup_{\Psi\in\Hom_{\Shv_{\bullet}}(X/X',Y/V)\setminus 0}\Gamma\Hom_{\Shv_{\bullet}}(X/X',Y/V)_{\Psi}\\
    =\Gamma\Hom_{\Shv_{\bullet}}(X/X',Y/V).
   \end{multline*}
Each inclusion $\Gamma'\Hom_{\Shv_{\bullet}}(X/X',Y/V)_{\Psi}\hookrightarrow\Gamma\Hom_{\Shv_{\bullet}}(X/X',Y/V)_{\Psi}$,
$\Psi=(U,Z,\psi)$, is isomorphic to the inclusion of $\Gamma$-spaces
   $$\vee_{cc(Z)}\Gamma(1_+,-)\hookrightarrow\Gamma(cc(Z)_+,-).$$
Its value at $M\in Sp_{S^1}$ equals the stable equivalence of spectra $\vee_{cc(Z)}M\hookrightarrow M^{\times cc(Z)}$.
It follows that 
   $$\hom_{\Shv_{\bullet}}^{M}(X/X',Y/V))\xrightarrow{}\Hom_{\Shv_{\bullet}}^{M}(X/X',Y/V))$$
is a stable equivalence of spectra. In particular,
   $$\pi_n(\hom_{\Shv_{\bullet}}^{M}(X/X',Y/V))\cong\pi_n(\Hom_{\Shv_{\bullet}}^{M}(X/X',Y/V)),\quad n\in\bb Z.$$
On the other hand,
     \begin{multline*}
      \pi_n(\hom_{\Shv_{\bullet}}^{M}(X/X',Y/V))=
      \bigoplus_{\Phi\in\hom_{\Shv_{\bullet}}(X/X',Y/V)\setminus 0}\pi_n(\Gamma\hom_{\Shv_{\bullet}}(X/X',Y/V)_{\Phi}(M))\cong\\
      \bigoplus_{\Phi\in\hom_{\Shv_{\bullet}}(X/X',Y/V)\setminus 0}\pi_n(\Gamma(1_+,-)(M))=
      \bigoplus_{\Phi\in\hom_{\Shv_{\bullet}}(X/X',Y/V)\setminus 0}\pi_n(M)=\\
      =\bb Z\hom_{\Shv_{\bullet}}(X/X',Y/V)\otimes\pi_n(M),
     \end{multline*}
as was to be shown.
\end{proof}

The proof of the preceding theorem also implies the following result.

\begin{cor}\label{lindgren}
For every $M\in Sp_{S^1}$ the spectrum $\Hom_{\Shv_{\bullet}}^{M}(X/X',Y/V))$ has the stable homotopy type of
$\bigvee_{\Phi\in\hom_{\Shv_{\bullet}}(X/X',Y/V)\setminus 0} M=\hom_{\Shv_{\bullet}}(X/X',Y/V)\wedge M$.
\end{cor}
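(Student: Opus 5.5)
The plan is to read the statement off the proof of Theorem~\ref{protas}: the only new point is to identify the stable homotopy type, not just the homotopy groups. First I will use Theorem~\ref{protas} to replace $\Hom_{\Shv_{\bullet}}^{M}(X/X',Y/V)$ by $\hom_{\Shv_{\bullet}}^{M}(X/X',Y/V)$, since the inclusion of $\Gamma$-spaces induces a stable equivalence between these two spectra. It then suffices to compute the evaluation of $\Gamma\hom_{\Shv_{\bullet}}(X/X',Y/V)$ at $M$ up to isomorphism, not just up to stable equivalence.

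For this I will invoke Lemma~\ref{dorofeev}, which gives $\Gamma\hom_{\Shv_{\bullet}}(X/X',Y/V)\cong\bigvee_{\Phi\neq 0}\Gamma(1_+,-)$. The extension functor $A\mapsto A(M)=\int^{n_+}M^{\times n}\wedge A(n_+)$ is a coend and is computed levelwise in the $\Gamma$-space variable. It therefore commutes with colimits of $\Gamma$-spaces, in particular with arbitrary wedges. Since $\Gamma(1_+,-)$ is representable, the co-Yoneda lemma gives $\Gamma(1_+,-)(M)\cong M^{\times 1}=M$. Combining these,
\[
\hom_{\Shv_{\bullet}}^{M}(X/X',Y/V)\cong\bigvee_{\Phi\in\hom_{\Shv_{\bullet}}(X/X',Y/V)\setminus 0}M .
\]
This is precisely the smash product of the pointed set $\hom_{\Shv_{\bullet}}(X/X',Y/V)$ with $M$, where the base point $0$ is collapsed. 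Composing with the stable equivalence of Theorem~\ref{protas} gives the claimed stable homotopy type.

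The only step needing care is the commutation of the coend with infinite wedges. I would settle it by noting that, as a functor of $A$, $A(M)$ is a left adjoint (a weighted colimit), so it preserves all colimits. Alternatively, one can write the infinite wedge as the filtered colimit of its finite subwedges, use the remark preceding Theorem~\ref{protas} that the extension commutes with filtered colimits, and check the finite case directly. As a consistency check, the homotopy groups of the wedge recover $\bb Z\hom_{\Shv_{\bullet}}(X/X',Y/V)\otimes\pi_n(M)$, as in Theorem~\ref{protas}.
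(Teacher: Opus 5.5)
Your proposal is correct and is essentially the paper's argument: the corollary is read off from the proof of Theorem~\ref{protas}, combining the stable equivalence $\hom^{M}_{\Shv_{\bullet}}(X/X',Y/V)\to\Hom^{M}_{\Shv_{\bullet}}(X/X',Y/V)$ with the identification $\hom^{M}_{\Shv_{\bullet}}(X/X',Y/V)\cong\bigvee_{\Phi}\Gamma(1_+,-)(M)=\bigvee_{\Phi}M$ from Lemma~\ref{dorofeev}. Your extra justifications, that evaluation at $M$ preserves wedges (it is a left adjoint in the $\Gamma$-space variable) and that $\Gamma(1_+,-)(M)\cong M$ by co-Yoneda, only make explicit what the paper uses implicitly.
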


\begin{cor}\label{wilson}
The functors
   $$M\in Sp_{S^1}\mapsto\Hom_{\Shv_{\bullet}}^{M}(X/X',Y/V))\in Sp_{S^1}$$
and
   $$M\in Sp_{S^1}\mapsto\hom_{\Shv_{\bullet}}^{M}(X/X',Y/V))\in Sp_{S^1}$$
respect stable equivalences and stable homotopy (co)fiber sequences.
\end{cor}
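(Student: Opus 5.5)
The plan is to reduce both functors to the functor $M\mapsto W\wedge M$, where $W:=\hom_{\Shv_{\bullet}}(X/X',Y/V)$ is regarded as a pointed set, i.e.\ a discrete pointed simplicial set. First I treat $\hom^M_{\Shv_{\bullet}}(X/X',Y/V)$. By Lemma~\ref{dorofeev}, $\Gamma\hom_{\Shv_{\bullet}}(X/X',Y/V)$ is isomorphic to $\bigvee_{W\setminus 0}\Gamma(1_+,-)$. Evaluation of $\Gamma$-spaces on spectra commutes with wedges, since the coend is computed levelwise and the smash product distributes over wedges. Moreover $\Gamma(1_+,-)(M)=M$ by the co-Yoneda lemma. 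Hence $\hom^M_{\Shv_{\bullet}}(X/X',Y/V)\cong W\wedge M$, and this identification is natural in $M$.

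Next, $W\wedge M=\bigvee_{W\setminus 0}M$ preserves stable equivalences. An arbitrary wedge of stable equivalences is a stable equivalence, because stable homotopy groups commute with wedges (filtered colimits of finite wedges, where finite wedges agree with finite products). The functor also preserves stable (co)fiber sequences. Smashing a levelwise cofibration with a pointed set is again a cofibration, and it carries cofibre sequences to cofibre sequences, since $W\wedge-$ commutes with pushouts. In $Sp_{S^1}$ cofiber and fiber sequences coincide up to stable equivalence. A cleaner route is to note that $\pi_n(W\wedge M)\cong\bb ZW\otimes\pi_n(M)$, as in Theorem~\ref{protas}. Applied to a fiber sequence $F\to E\to B$, this gives the long exact sequence of $\pi_*$ tensored with the free abelian group $\bb ZW$, which stays exact because $\bb ZW$ is flat. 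Hence $W\wedge F\to W\wedge E\to W\wedge B$ is again a homotopy (co)fiber sequence.

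For $\Hom^M_{\Shv_{\bullet}}(X/X',Y/V)$ I use Theorem~\ref{protas}. The inclusion induces a natural stable equivalence $\hom^M\to\Hom^M$, natural in $M$ because it is induced by a map of $\Gamma$-spaces. Given a stable equivalence $M\to M'$, the naturality square together with 2-out-of-3 shows that $\Hom^M\to\Hom^{M'}$ is a stable equivalence. Given a homotopy (co)fiber sequence, the natural equivalence identifies the image under $\Hom^{(-)}$ with the image under $\hom^{(-)}$, which is a homotopy (co)fiber sequence by the previous step. Being a homotopy fiber sequence is invariant under levelwise stable equivalence of diagrams, so the image under $\Hom^{(-)}$ is one as well.

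The main obstacle is making precise what "respects homotopy (co)fiber sequences" means, namely that the composite is null compatibly with the comparison map to the homotopy fiber. I would handle this by working with strict maps: apply the functor to the canonical sequence $\mathrm{hofib}(f)\to E\to B$ and check that the induced map to $\mathrm{hofib}$ of the image is a $\pi_*$-isomorphism. For $W\wedge-$, this map is the canonical comparison $W\wedge\mathrm{hofib}(f)\to\mathrm{hofib}(W\wedge f)$. It is a $\pi_*$-iso by the five lemma applied to the flat-tensored long exact sequences above.
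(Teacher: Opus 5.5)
Your proposal is correct and follows the paper's route. The paper deduces the corollary in one line from Corollary~\ref{lindgren}, i.e.\ from the identification (via Lemma~\ref{dorofeev} and Theorem~\ref{protas}) of both functors with $M\mapsto\hom_{\Shv_{\bullet}}(X/X',Y/V)\wedge M$, a wedge of copies of $M$, which visibly preserves stable equivalences and (co)fiber sequences. Your version also makes explicit the naturality in $M$ of the isomorphism $\hom^{M}_{\Shv_{\bullet}}(X/X',Y/V)\cong W\wedge M$ and of the stable equivalence $\hom^{M}_{\Shv_{\bullet}}(X/X',Y/V)\to\Hom^{M}_{\Shv_{\bullet}}(X/X',Y/V)$; this is exactly what the argument needs and what the paper leaves implicit.
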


\begin{proof}
This immediately follows from Corollary~\ref{lindgren}.
\end{proof}

\begin{dfn}\label{geomconn}
Given $A,B\in\Sm_k$, a closed subset $A'$ of $A$ and open subset $C$ of $B$, we say that a morphism
$\tau:A/A'\to B/C$ of pointed Nisnevich sheaves is {\it geometrically connected\/} if 
the support $Z$ of the associated triple $\Phi=(U,Z,\phi)$ (see Lemma~\ref{Voevlemma}) is geometrically connected over $k$.
For any $n\geq 1$ let $A_n\subset A^{\times n}$ be a closed subset which is the union of all subsets of the form
$A\times\cdots\times A'\times\cdots\times A$. Set $A_0=\emptyset$. Likewise, let $C_n\subset B^{\times n}$ be an open 
subset which is the union of all subsets of the form
$B\times\cdots\times C\times\cdots\times B$. Set $C_0=\emptyset$.

Given $n\geq 0$, $X,Y\in\Sm_k$, a closed subset $X'$ of $X$ and open subset $V$ of $Y$, a {\it $\tau$-framed correspondence
of level $n$ from $X/X'$ to $Y/V$\/} is a morphism of pointed Nisnevich sheaves
   $$X\times A^{\times n}/(X'\times A^{\times n}\cup X\times A_n)\longrightarrow Y\times B^{\times n}/(V\times B^{\times n}\cup Y\times C_n).$$
Denote the set of $\tau$-framed correspondences of level $n$ from $X/X'$ to $Y/V$ by
$\Fr_n^\tau(X/X',Y/V)$. It is pointed at the empty correspondence. 
If $\tau$ is given by the canonical geometrically connected morphism $\sigma:\bb P^{\wedge 1}\to T$,
where $\bb P^{\wedge 1}=\bb P^1/\infty$, $T=\bb A^1/\bb G_m$, then we drop $\tau$ from notation and just write
$\Fr_n(X/X',Y/V)$. Recall that $\sigma$ is associated to the triple $(\{0\},\bb A^1, t)$ if we use Lemma~\ref{Voevlemma}.

Denote by $F_n^\tau(X/X',Y/V)$ the pointed set of those $\tau$-framed correspondences $\Phi=(U,Z,\phi)\in\Fr_n^\tau(X/X',Y/V)$ 
for which $Z$ is connected. There is a natural map
   $$-\wedge\tau:\Fr_n^\tau(X/X',Y/V)\to \Fr_{n+1}^\tau(X/X',Y/V).$$
Since $\tau$ is geometrically connected, it follows from~\cite[Lemma~33.7.4]{Stack} that the restriction of this map 
to $F_n^\tau(X/X',Y/V)$ lands in $F_{n+1}^\tau(X/X',Y/V)$.

The set of {\it stable $\tau$-framed correspondences from $X/X'$ to $Y/V$\/} is the pointed set
   $$\Fr^\tau(X/X',Y/V)=\colim(\Fr_0^\tau(X/X',Y/V)\xrightarrow{-\wedge\tau}\Fr_{1}^\tau(X/X',Y/V)\xrightarrow{-\wedge\tau}\cdots).$$
The set $F^\tau(X/X',Y/V)$ is defined in a similar fashion.

Finally, we define {\it linear $\tau$-framed correspondences of level $n$ from $X/X'$ to $Y/V$\/} 
as the reduced free Abelian group, denoted by $\bb ZF_n^\tau(X/X',Y/V)$,
associated to the pointed set $F_n^\tau(X/X',Y/V)$. In turn,
{\it linear stable $\tau$-framed correspondences from $X/X'$ to $Y/V$\/} are the elements of the Abelian group
$\bb ZF^\tau(X/X',Y/V)=\colim_n\bb ZF_n^\tau(X/X',Y/V)$.
\end{dfn}

For any finite pointed set $(K,*)$, we will write $\Fr^\tau(X/X',Y/V\otimes K)$ to denote the set of stable $\tau$-framed
correspondences from $X/X'$ to $(\sqcup_{K\setminus *}Y)/(\sqcup_{K\setminus *}V)$.
As above, we have two $\Gamma$-spaces
   $$\Gamma\Fr^\tau(X/X',Y/V):K\in\Gamma^{\op}\longmapsto\Fr^\tau(X/X',Y/V\otimes K)$$
and
   $$\Gamma F^\tau(X/X',Y/V):K\in\Gamma^{\op}\longmapsto F^\tau(X/X',Y/V\otimes K)$$
We define $\Gamma$-spaces $\Gamma\Fr^\tau_n(X/X',Y/V)$ and $\Gamma F^\tau_n(X/X',Y/V)$ in a similar fashion.
By construction, 
   $$\Gamma\Fr^\tau(X/X',Y/V)=\colim_n\Gamma\Fr_n^\tau(X/X',Y/V)\quad\textrm{and}\quad
       \Gamma F^\tau(X/X',Y/V)=\colim_n\Gamma F^\tau_n(X/X',Y/V).$$
   
If $M\in Sp_{S^1}$ we will write $\Fr^\tau(X/X',Y/V\otimes M)$ and $F^\tau(X/X',Y/V\otimes M)$ to denote the values of
the $\Gamma$-spaces $\Gamma\Fr^\tau(X/X',Y/V)$ and $\Gamma F^\tau(X/X',Y/V)$ at $M$.

\begin{thm}\label{carbery}
The inclusion of $\Gamma$-spaces
   $$\Gamma F^\tau(X/X',Y/V)\hookrightarrow\Gamma\Fr^\tau(X/X',Y/V)$$
induces a stable equivalence of spectra 
   $$F^\tau(X/X',Y/V\otimes M)\xrightarrow{\sim}\Fr^\tau(X/X',Y/V\otimes M)$$
for any $M\in Sp_{S^1}$. Moreover, for any $n\in\bb Z$ the $n$-th stable homotopy group 
$\pi_n(\Fr^\tau(X/X',Y/V\otimes M))$ of the $S^1$-spectrum
$\Fr^\tau(X/X',Y/V\otimes M)$ is isomorphic to $\bb ZF^\tau(X/X',Y/V)\otimes\pi_n(M)$, where
$\pi_n(M)$ is the $n$-th stable homotopy group of $M$.
\end{thm}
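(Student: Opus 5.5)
The plan is to reduce to Theorem~\ref{protas} level by level and then pass to the colimit over $n$. For fixed $n$, a $\tau$-framed correspondence of level $n$ is by definition a morphism of pointed Nisnevich sheaves
$$X\times A^{\times n}/(X'\times A^{\times n}\cup X\times A_n)\longrightarrow Y\times B^{\times n}/(V\times B^{\times n}\cup Y\times C_n),$$
and the source and target are of the form considered in Lemma~\ref{Voevlemma}. Here $X'\times A^{\times n}\cup X\times A_n$ is closed in $X\times A^{\times n}$, and $V\times B^{\times n}\cup Y\times C_n$ is open in $Y\times B^{\times n}$. Replacing $Y$ by $\sqcup_{K\setminus *}Y$ amounts to smashing the target with $K$. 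So there are identifications of $\Gamma$-spaces
$$\Gamma\Fr_n^\tau(X/X',Y/V)=\Gamma\Hom_{\Shv_{\bullet}}(\cdots),\qquad \Gamma F_n^\tau(X/X',Y/V)=\Gamma\hom_{\Shv_{\bullet}}(\cdots).$$
These identifications need the observation that the supports are connected in the same sense. Theorem~\ref{protas} then gives, for every $n$, a stable equivalence $F_n^\tau(X/X',Y/V\otimes M)\to \Fr_n^\tau(X/X',Y/V\otimes M)$. It also gives $\pi_i(\Fr_n^\tau(X/X',Y/V\otimes M))\cong \bb ZF_n^\tau(X/X',Y/V)\otimes\pi_i(M)$.

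Next I will check that these equivalences are compatible with the structure maps $-\wedge\tau$. The map $-\wedge\tau$ is a morphism of $\Gamma$-spaces $\Gamma\Fr_n^\tau\to\Gamma\Fr_{n+1}^\tau$: it is applied componentwise and commutes with the maps $g:U\to K\setminus *$. Because $\tau$ is geometrically connected, it restricts to $\Gamma F_n^\tau\to\Gamma F_{n+1}^\tau$ (this is the remark in Definition~\ref{geomconn}). Hence the inclusions form a map of $\bb N$-indexed diagrams of $\Gamma$-spaces. Evaluating at $M$ commutes with filtered colimits, as recalled before Theorem~\ref{protas}. So $F^\tau(X/X',Y/V\otimes M)\to\Fr^\tau(X/X',Y/V\otimes M)$ is the colimit of the levelwise stable equivalences. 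A sequential colimit of stable equivalences of spectra is a stable equivalence, since stable homotopy groups commute with sequential colimits. This proves the first claim.

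For the homotopy groups, I will write
$$\pi_i(\Fr^\tau(X/X',Y/V\otimes M))=\colim_n\pi_i(F_n^\tau(X/X',Y/V\otimes M))=\colim_n\bigl(\bb ZF_n^\tau(X/X',Y/V)\otimes\pi_i(M)\bigr).$$
Since tensor product commutes with colimits, this equals $\bb ZF^\tau(X/X',Y/V)\otimes\pi_i(M)$.

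The main point to verify is naturality: under the wedge decomposition of Lemma~\ref{dorofeev}, the map $-\wedge\tau$ must send the summand indexed by $\Phi$ to the summand indexed by $\Phi\wedge\tau$. This is what makes the induced map on $\pi_i$ equal to the map $\bb ZF_n^\tau\to\bb ZF_{n+1}^\tau$ tensored with $\pi_i(M)$. I expect this to follow directly from the formula $(U,Z,\phi)\wedge\tau=(U\times U_\tau,Z\times Z_\tau,\phi\times\phi_\tau)$. Together with connectedness of $Z\times Z_\tau$, this formula shows the summand-wise map is the identity of $\Gamma(1_+,-)$.
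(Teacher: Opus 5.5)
Your proposal is correct and follows the paper's proof: apply Theorem~\ref{protas} at each level $n$, then pass to the sequential colimit over $-\wedge\tau$, using that filtered colimits preserve stable equivalences and commute with stable homotopy groups. Your extra check that $-\wedge\tau$ sends the $\Phi$-summand of the decomposition in Lemma~\ref{dorofeev} to the $\Phi\wedge\tau$-summand spells out a naturality point the paper leaves implicit. That check is what makes the levelwise isomorphisms $\pi_i\cong\bb ZF_n^\tau\otimes\pi_i(M)$ compatible with the transition maps, which the paper's phrase ``colimits of isomorphisms are isomorphisms'' silently requires.
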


\begin{proof}
By Theorem~\ref{protas} the map of spectra
  $$F^\tau_n(X/X',Y/V\otimes M)\xrightarrow{}\Fr^\tau_n(X/X',Y/V\otimes M)$$
is a stable equivalence for any $M\in Sp_{S^1}$ and any $n\geq 0$. As filtered colimits of stable equivalences
are stable equivalences, so is the map of spectra
  $$F^\tau(X/X',Y/V\otimes M)\xrightarrow{}\Fr^\tau(X/X',Y/V\otimes M).$$
  
It also follows from Theorem~\ref{protas} that $\pi_n(\Fr^\tau_m(X/X',Y/V\otimes M))$ 
is isomorphic to the group $\bb ZF^\tau_m(X/X',Y/V)\otimes\pi_n(M)$ for any $m\geq 0$ and $n\in\bb Z$.
As colimits of isomorphisms are isomorphisms, we see that
$\pi_n(\Fr^\tau(X/X',Y/V\otimes M))$ 
is isomorphic to $\bb ZF^\tau(X/X',Y/V)\otimes\pi_n(M)$.
\end{proof}

The proof of the preceding theorem together with Corollary~\ref{lindgren} also implies the following result.

\begin{cor}\label{strome}
For every $M\in Sp_{S^1}$ the spectrum $\Fr^\tau(X/X',Y/V\otimes M)$ has stable homotopy type of
$\bigvee_{\Phi\in F^\tau(X/X',Y/V)\setminus 0} M=F^\tau(X/X',Y/V)\wedge M$.
\end{cor}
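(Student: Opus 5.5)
We want to show that $\Fr^\tau(X/X',Y/V\otimes M)$ is stably equivalent to $F^\tau(X/X',Y/V)\wedge M$. The plan is to work level by level and then pass to the colimit along $-\wedge\tau$.

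\emph{Level $n$.} By Definition~\ref{geomconn}, $\Gamma\Fr^\tau_n(X/X',Y/V)$ is literally the $\Gamma$-space $\Gamma\Hom_{\Shv_\bullet}(\cc X_n,\cc Y_n)$ for suitable smooth pairs $\cc X_n$ (built from $X\times A^{\times n}$) and $\cc Y_n$ (built from $Y\times B^{\times n}$). Under this identification $\Gamma F^\tau_n$ is $\Gamma\hom_{\Shv_\bullet}(\cc X_n,\cc Y_n)$. Lemma~\ref{dorofeev} gives a natural isomorphism
$$\Gamma F_n^\tau(X/X',Y/V)\cong\bigvee_{\Phi\in F^\tau_n\setminus 0}\Gamma(1_+,-).$$
Evaluating at $M$ therefore gives an isomorphism of spectra $F^\tau_n(X/X',Y/V\otimes M)\cong F^\tau_n(X/X',Y/V)\wedge M$. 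Theorem~\ref{protas} (equivalently Corollary~\ref{lindgren}) says the inclusion into $\Fr^\tau_n(X/X',Y/V\otimes M)$ is a stable equivalence.

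\emph{Passage to the colimit.} The maps $-\wedge\tau$ restrict to maps $F^\tau_n\to F^\tau_{n+1}$, since $\tau$ is geometrically connected. So we get a filtered system of $\Gamma$-spaces $\Gamma F^\tau_n\to\Gamma F^\tau_{n+1}$. Under the wedge decomposition above, these maps send the summand indexed by $\Phi$ identically onto the summand indexed by $\Phi\wedge\tau$. Hence they are induced by the pointed maps of indexing sets $F^\tau_n\to F^\tau_{n+1}$, and the isomorphisms $F^\tau_n(X/X',Y/V\otimes M)\cong F^\tau_n\wedge M$ are compatible with the transition maps. Since evaluation commutes with filtered colimits, as recalled after the definition of $A(M)$, we obtain
$$F^\tau(X/X',Y/V\otimes M)=\colim_n F^\tau_n(X/X',Y/V)\wedge M\cong F^\tau(X/X',Y/V)\wedge M.$$

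\emph{Conclusion.} Theorem~\ref{carbery} shows that $F^\tau(X/X',Y/V\otimes M)\to\Fr^\tau(X/X',Y/V\otimes M)$ is a stable equivalence. Combining this with the isomorphism above gives the claim.

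\emph{Main obstacle.} The key point is checking that the transition maps are really wedge-summand inclusions indexed by the set map $\Phi\mapsto\Phi\wedge\tau$. The risk is that they might mix summands, for example if $-\wedge\tau$ failed to preserve connectedness of supports or failed to be compatible with the extra datum $g\colon U\to K\setminus *$. To settle this I will use that $(U,Z,\phi,g)\wedge\tau$ has support $Z\times Z_\tau$, which is connected by geometric connectedness, and that its map to $K\setminus *$ is $g\circ\mathrm{pr}$. This makes the summand-wise description evident and natural in $K$.
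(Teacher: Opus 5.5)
Your proposal is correct and follows essentially the paper's argument: it combines the level-$n$ wedge decomposition from Lemma~\ref{dorofeev} (as in Corollary~\ref{lindgren}) with the passage to the filtered colimit from the proof of Theorem~\ref{carbery}. You also make explicit a point the paper leaves implicit: the stabilization maps $-\wedge\tau$ carry the summand indexed by $\Phi$ identically onto the summand indexed by $\Phi\wedge\tau$, so the identification with $F^\tau(X/X',Y/V)\wedge M$ survives the colimit.
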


\begin{cor}\label{roy}
The functors
   $$M\in Sp_{S^1}\mapsto\Fr^\tau(X/X',Y/V\otimes M)\in Sp_{S^1}$$
and
   $$M\in Sp_{S^1}\mapsto F^\tau(X/X',Y/V\otimes M)\in Sp_{S^1}$$
respect stable equivalences and stable homotopy (co)fiber sequences.
Moreover, if $M$ is a connective spectrum, so are $\Fr^\tau(X/X',Y/V\otimes M)$
and $F^\tau(X/X',Y/V\otimes M)$.
\end{cor}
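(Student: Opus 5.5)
The plan is to read everything off Corollary~\ref{strome}, just as Corollary~\ref{wilson} was read off Corollary~\ref{lindgren}. By Corollary~\ref{strome} and the first part of Theorem~\ref{carbery}, for every $M\in Sp_{S^1}$ there are stable equivalences
$$F^\tau(X/X',Y/V\otimes M)\simeq\Fr^\tau(X/X',Y/V\otimes M)\simeq\bigvee_{\Phi\in F^\tau(X/X',Y/V)\setminus 0}M.$$
These equivalences are natural in $M$. They come from the colimit over levels $n$ of the natural inclusions $\vee_{cc(Z)}\Gamma(1_+,-)\hookrightarrow\Gamma(cc(Z)_+,-)$ from the proof of Theorem~\ref{protas}, followed by the inclusion $\Gamma F^\tau\hookrightarrow\Gamma\Fr^\tau$. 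Here the $\Gamma$-space $\Gamma F^\tau(X/X',Y/V)$ is identified with a filtered colimit of wedges $\bigvee\Gamma(1_+,-)$, by Lemma~\ref{dorofeev} applied level by level. Its value at $M$ is therefore literally the wedge $F^\tau(X/X',Y/V)\wedge M$, functorially in $M$, because evaluation commutes with filtered colimits.

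Once this is in place, the two functors are, up to natural stable equivalence, the functor $M\mapsto W\wedge M$ with $W=F^\tau(X/X',Y/V)$ a pointed set, i.e.\ a discrete pointed simplicial set. I would then check three points.

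\textbf{Stable equivalences.} A wedge of stable equivalences is a stable equivalence, since $\pi_n$ of a wedge is the direct sum of the $\pi_n$. Equivalently, Theorem~\ref{carbery} computes $\pi_n$ as $\bb ZF^\tau(X/X',Y/V)\otimes\pi_n(M)$, naturally in $M$, so the functors preserve $\pi_*$-isomorphisms.

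\textbf{(Co)fiber sequences.} In $Sp_{S^1}$ a stable cofiber sequence $M'\to M\to M''$ is also a fiber sequence. Wedging with $W$ preserves homotopy cofiber sequences, because the wedge is a colimit and commutes with homotopy pushouts. Alternatively, the long exact sequence of $\pi_*$ is carried to its direct sum over $W\setminus 0$, which stays exact. For fiber sequences one uses that stable fiber and cofiber sequences coincide.

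\textbf{Connectivity.} If $\pi_n(M)=0$ for $n<0$, then $\pi_n$ of the value is $\bb ZF^\tau(X/X',Y/V)\otimes\pi_n(M)=0$ for $n<0$.

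The main obstacle is the naturality in $M$ of the identification in Corollary~\ref{strome}. The argument needs a natural transformation of functors, not just an objectwise equivalence. For $\Fr^\tau$ this holds because the comparison map is induced by the map of $\Gamma$-spaces $\Gamma F^\tau\hookrightarrow\Gamma\Fr^\tau$, and evaluation is functorial in both variables. If this were doubtful, I would bypass it using only the natural $\pi_*$-formula of Theorem~\ref{carbery}, which already suffices for all three assertions.
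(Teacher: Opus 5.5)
Your proposal is correct and is essentially the paper's proof, which says only that the statement follows immediately from Corollary~\ref{strome}. You also supply the naturality the paper leaves implicit: $\Gamma F^\tau(X/X',Y/V)$ is the filtered colimit of the $\Gamma$-spaces $K\mapsto F^\tau_n(X/X',Y/V)\wedge K$, so its value at $M$ is literally $F^\tau(X/X',Y/V)\wedge M$, and the comparison with $\Fr^\tau(X/X',Y/V\otimes M)$ is natural in $M$.

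The only soft spot is your fallback. Naturality of the $\pi_*$-formula alone does not give the (co)fiber statement without a compatibility with connecting maps. Your main argument via $W\wedge M$ does not need that fallback.
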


\begin{proof}
This immediately follows from Corollary~\ref{strome}.
\end{proof}

Let $SH$ be the stable homotopy category of $S^1$-spectra. An {\it Adams resolution\/} of a spectrum $M\in SH$ is a sequence in $SH$
   \begin{equation}\label{adams}
    \xymatrix{\cdots\ar[r]&M^2\ar[d]\ar[r]&M^1\ar[d]\ar[r]&M^0=M\ar[d]\\
                                                           &L^2&L^1&L^0}
   \end{equation}                                                        
such that:
\begin{enumerate}
\item for each $i\geq 0$, $M^{i+1}\to M^i\to L^i\lra{+}$ is a triangle in $SH$;
\item each $L^i$ is an $S^1$-spectrum whose spaces are simplicial Abelian groups.
\end{enumerate}

\begin{exs}\label{adamsresex}
(1) Let $SH_{\geq 0}$ be the full subcategory of $SH$ of $(-1)$-connected spectra. 
Every $M\in SH_{\geq 0}$ fits in a canonical tower (see, e.g.,~\cite[Section~II.8]{Sch})
   \begin{equation}\label{tower}
    \cdots\to M^2\to M^1\to M^0=M
   \end{equation}
with $M^n$ being $(n-1)$-connected. Each layer $L^n=cofib(M^{n+1}\to M^n)$
is isomorphic to the $n$-shifted Eilenberg--MacLane spectrum $EM(\pi_n(M))[n]$ of the Abelian group $\pi_n(M)$.
We will also refer to this Adams resolution~\eqref{tower} as the {\it Postnikov tower for $M$}.

(2) A typical Adams resolution of a spectrum $M\in SH_{\geq 0}$ is defined as follows. Let $\bb Z(M)$ be the linearization
spectrum for $M$. In more detail, if $M=(M_0,M_1,\ldots)$ then $\bb Z(M):=(\bb Z(M_0),\bb Z(M_1),\ldots)$, where each
$\bb Z(M_i)$ is the reduced simplicial free Abelian group associated to $M_i$. Set $M_1:=fib(M\to\bb Z(M))$. In this way $M$
has the following Adams resolution:
   $$\xymatrix{\cdots\ar[r]&M^2\ar[d]\ar[r]&M^1\ar[d]\ar[r]&M^0=M\ar[d]\\
                                                           &\bb Z(M^2)&\bb Z(M^1)&\bb Z(M^0)}$$
Note that each $M^n$ is $(n-1)$-connected, and hence so is $\bb Z(M^n)$.                                                
\end{exs}

The following result is a general principle for producing spectral sequences associated to framed correspondences. 
It will be of great utility for computing framed motives of algebraic varieties.

\begin{thm}\label{oshie}
Suppose $M\in SH$ has an Adams resolution of the form~\eqref{adams}. Then 
the spectrum $\Fr^\tau(X/X',Y/V\otimes M)$ has an Adams resolution of the form
   $$ \xymatrix{\cdots\ar[r]&\Fr^\tau(X/X',Y/V\otimes M^2)\ar[d]\ar[r]&\Fr^\tau(X/X',Y/V\otimes M^1)\ar[d]\ar[r]&\Fr^\tau(X/X',Y/V\otimes M)\ar[d]\\
                                                           &\bb ZF^\tau(X/X',Y/V)\otimes L^2&\bb ZF^\tau(X/X',Y/V)\otimes L^1&\bb ZF^\tau(X/X',Y/V)\otimes L^0}$$
\end{thm}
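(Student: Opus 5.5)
The plan is to apply the functor $\Phi(-):=\Fr^\tau(X/X',Y/V\otimes -)\colon Sp_{S^1}\to Sp_{S^1}$ termwise to the given Adams resolution~\eqref{adams}, and then verify the two defining conditions. First, $\Phi$ has to be made to act on the resolution, which lives in $SH$. I would represent each map $M^{i+1}\to M^i$ by a map of cofibrant spectra, replacing the $M^i$ if necessary. By Corollary~\ref{roy}, $\Phi$ preserves stable equivalences, so it descends to a functor on $SH$. This yields the horizontal tower $\cdots\to\Phi(M^1)\to\Phi(M^0)=\Phi(M)$, and its homotopy type does not depend on the chosen models.

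Next comes condition (1). Each triangle $M^{i+1}\to M^i\to L^i\lra{+}$ comes from a homotopy cofiber sequence, and Corollary~\ref{roy} says $\Phi$ respects stable homotopy cofiber sequences. Hence
\[
\Phi(M^{i+1})\to\Phi(M^i)\to\Phi(L^i)\lra{+}
\]
is again a triangle in $SH$. The vertical maps are $\Phi(M^i)\to\Phi(L^i)$.

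Finally, for condition (2), I have to identify $\Phi(L^i)$ with $\bb ZF^\tau(X/X',Y/V)\otimes L^i$ and check that the latter has simplicial Abelian group spaces. By Corollary~\ref{strome}, $\Phi(L^i)\simeq F^\tau(X/X',Y/V)\wedge L^i=\bigvee_{F^\tau\setminus 0}L^i$. Since $L^i$ is levelwise a simplicial Abelian group, it is a generalized Eilenberg--MacLane (product of $H\bb Z$-module) spectrum. A finite wedge of such spectra agrees with the product, and for a filtered colimit one uses that a wedge of $H\bb Z$-modules is an $H\bb Z$-module. So the wedge $\bigvee_{F^\tau\setminus0}L^i$ is stably equivalent to $\bigoplus_{F^\tau\setminus 0}L^i=\bb ZF^\tau(X/X',Y/V)\otimes L^i$, where $\otimes$ means levelwise tensoring of the simplicial Abelian groups $L^i_n$ with the free group $\bb ZF^\tau$. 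The comparison map is the canonical map from the wedge to the levelwise direct sum. It is a stable equivalence because both sides have $\pi_n$ equal to $\bb ZF^\tau\otimes\pi_n(L^i)$: for $\Phi(L^i)$ this is Theorem~\ref{carbery}, and for the direct sum it follows because $\bb ZF^\tau$ is free, so homotopy commutes with $\bb ZF^\tau\otimes-$.

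The step I expect to be the main obstacle is producing a natural map $\Phi(L^i)\to\bb ZF^\tau\otimes L^i$ directly, rather than just matching homotopy groups. For this I would define the map on $\Gamma$-space evaluations by addition. A stable $\tau$-correspondence to $Y/V\otimes K$ decomposes along $cc(Z)$ into pieces in $F^\tau$ labelled by elements of $K$. When $K$ is replaced by an Abelian group $A=L^i_n$, I would send the data $(\Phi_j,a_j)_j$ to $\sum_j[\Phi_j]\otimes a_j$. Compatibility with $\Gamma^{\op}$-maps holds because $L^i$ is Abelian, and compatibility with the colimit in $\tau$-level comes from the maps $-\wedge\tau$ preserving $F^\tau$. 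On $\pi_*$ this map induces the isomorphism above, so it is a stable equivalence. This finishes the construction of the Adams resolution.
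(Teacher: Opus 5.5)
Your proposal is correct and follows essentially the same route as the paper. Like the paper, you lift $\Fr^\tau(X/X',Y/V\otimes -)$ to a triangulated endofunctor of $SH$ via Corollary~\ref{roy}, apply it to the tower, identify $\Fr^\tau(X/X',Y/V\otimes L^i)$ with $F^\tau(X/X',Y/V)\w L^i$ by Corollary~\ref{strome}, and then pass to $\bb ZF^\tau(X/X',Y/V)\otimes L^i$ through the natural wedge-to-sum map, which is a stable equivalence because $L^i$ is levelwise simplicial Abelian. Your extra direct ``addition'' map is a harmless refinement but is not needed, since the definition of an Adams resolution only asks for an isomorphism in $SH$.
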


\begin{proof}
It follows from Corollary~\ref{roy} that the functors
   $$M\in Sp_{S^1}\mapsto\Fr^\tau(X/X',Y/V\otimes M)\in Sp_{S^1}$$
is lifted to a triangulated functor
   $$M\in SH\mapsto\Fr^\tau(X/X',Y/V\otimes M)\in SH.$$
Consider the image of the tower~\eqref{adams} under this functor. By Corollary~\ref{strome} each
$\Fr^\tau(X/X',Y/V\otimes L^i)$ is isomorphic to $F^\tau(X/X',Y/V)\w L^i$ in $SH$. Since $L^i$ is a spectrum formed by 
simplicial Abelian groups, the natural map of spectra $F^\tau(X/X',Y/V)\w L^i\to\bb ZF^\tau(X/X',Y/V)\otimes L^i$
is a stable equivalence. We see that $\Fr^\tau(X/X',Y/V\otimes L^i)$ is isomorphic to $\bb ZF^\tau(X/X',Y/V)\otimes L^i$ in $SH$.
\end{proof}

\section{MATHSS and MASS for framed motives}\label{mathssmass}

In the previous section, we established the homotopical properties of spectra arising from framed correspondences 
and obtained explicit descriptions of their stable homotopy types. We now apply these results to develop computational 
tools for framed motives. The principal objective of this section is the construction of a motivic Atiyah--Hirzebruch spectral sequence 
that relates framed motives associated with motivic Thom spectra to framed motivic cohomology. This spectral sequence 
provides a systematic mechanism for extracting homotopy-theoretic information from cohomological invariants
and serves as one of the main computational devices for framed motives.

The results of Section~\ref{hutson} 
supply the necessary descriptions of the associated homotopy types, allowing us to identify the $E^2$-page explicitly 
and establish convergence properties. As an application, we obtain spectral sequences computing the bigraded homotopy 
sheaves of broad classes of motivic Thom spectra (including the motivic sphere spectrum $S_T$ or the algebraic cobordism spectrum $MGL$)
and derive explicit cohomological descriptions of their 
homotopy invariants after suitable localization. It is worth mentioning that our spectral 
sequences are different from any kind of slice spectral sequences
as we work in the $S^1$-direction rather that in the $\gmp$-direction. More precisely, we apply
framed motives to towers of $S^1$-spectra.

Let us start with preparations.
Let $T^n=\bb A^n/\bb A^1-0$. For any $\mathcal X\in\cc M$ and any
$T$-spectrum $E$ denote by $\Fr_n^E(\mathcal X)=\uhom(\PP^{\w
n},\mathcal X\w E_n)$, $n\geqslant 0$. Also, set $\Fr^E(\mathcal
X)=\colim_n(\uhom(\PP^{\w n},\mathcal X\w E_n))$. If $\mathcal X=X_+$, $X\in
\Sm_k$, then we shall write $\Fr^E(X)$ dropping $+$ from notation.
We also drop $E$ from notation if $E$ is the motivic sphere spectrum $S_T=(S^0,T,T^2,\ldots)$.

By~\cite[Lemma~9.1]{GN} $\Fr^E(\mathcal X)$ is functorial in $\mathcal X$ and $E$. If $E$ is
a directed colimit of $T$-spectra $\colim_kE_k$, then
$\Fr^E(\mathcal X)=\colim_k\Fr^{E_k}(\mathcal X)$. In particular,
$\Fr^E(\mathcal X)=\colim_k\Fr^{L_kE}(\mathcal X)$, where $L_kE=(E_0,\ldots,E_{k-1},E_k,E_k\wedge T,E_k\w T^2,\ldots)$ is
the $k$-th layer of $E$.

\begin{dfn}
(1) Given a $T$-spectrum $E$, the assignment 
   $$C_*\Fr^E(\mathcal X):X\in\Sm_k,K\in\Gamma^{\op}\mapsto C_*\Fr^E(X,\mathcal X\w K),$$ 
is plainly a $\Gamma$-space. If $N\in Sp_{S^1}$ the {\it $E$-framed motive
$M_E(\mathcal X,N)$ of $\mathcal X$ with coefficients in $N$\/} is the
value of the $\Gamma$-space $C_*\Fr^E(\mathcal X)$ at $N$. By definition, $M_E(\mathcal X,N)\in Sp_{S^1}(k)$.
If $E=S_T$ and $N=\bb S=(S^0,S^1,\ldots)$, then
$M_E(\mathcal X,N)$ is the framed motive $M_{fr}(\mathcal X)$ of
$\mathcal X$ in the sense of~\cite{GP1}.

(2) Following~\cite{GN} a $T$-spectrum $E$ is called a {\it Thom spectrum}
if every space $E_n$ has the form
\[E_n=\colim_i E_{n,i}, \text{  }E_{n,i}=V_{n,i}/(V_{n,i}-Z_{n,i}),\]
where $V_{n,i}\to V_{n,i+1}$ is a directed sequence of smooth
varieties, $Z_{n,i}\to Z_{n,i+1}$ is a directed system of smooth
closed subschemes in $V_{n,i}.$ We say that a Thom spectrum $E$ {\it
has the bounding constant $d$\/} if $d$ is the minimal integer such
that codimension of $Z_{n,i}$ in $V_{n,i}$ is strictly greater than
$n-d$ for all $i,n$. If $E$ is also symmetric then it is said to be
a {\it spectrum with contractible alternating group action}, if for
any $n$ and any even permutation $\tau\in \Sigma_n$ there is an
$\bb A^1$-homotopy $E_n\to \uhom(\bb A^1,E_n)$ between the action of
$\tau$ and the identity map. In other words, $E$ neglects the action
of even permutations up to $\bb A^1$-homotopy. The most interesting
examples of such symmetric Thom spectra, all of which have the
bounding constant $d=1$, are given by the spectra $MGL$, $MSL$ or
$MSp$ (the latter two are regarded as $T^2$-symmetric spectra for
which the above definitions remain the same).

(3) Given a Thom $T$-spectrum $E$, $U\in \Sm_k$ and $Y=X/(X-Z)$, where
$X\in \Sm_k$ and $Z$ is a closed subset in $X$, denote by
$\bb ZF_n^E(U,Y)$ the free Abelian group generated by the elements of
$\Fr_n^E(U,Y)=\uhom(\PP^{\w n},Y\w E_n)$ with connected support
(recall that the elements of $\Fr_n^E(U,Y)$ have an explicit
geometric description using Voevodsky's Lemma~\ref{Voevlemma}). We
also set $\bb ZF^E(U,Y):=\colim_n\bb ZF_n^E(U,Y)$, where the colimit maps
are defined in the same fashion as those of $\Fr^E(U,Y)$.

The assignment $K\mapsto C_*\bb ZF^E(U,Y\w K)$ is plainly a
$\Gamma$-space. Let $N\in Sp_{S^1}$. The {\it linear $E$-framed motive $LM_E(Y,N)$ of
$Y$ with coefficients in $N$\/} is the value of the $\Gamma$-space $C_*\bb ZF^E(Y)$ at $N$. 
If $E=S_T$ and $N=\bb S$ then $LM_E(Y,N)$ is the linear framed motive
$LM_{fr}(Y)$ of $Y$ in the sense of~\cite{GP1}. Note that the
presheaves of stable homotopy groups $\pi_*(LM_E(Y))$ are computed
as the presheaves of homology groups of the complex $C_*\bb ZF^E(Y)$
(we freely use the Dold--Kan correspondence here). It follows from Lemma~\ref{dorofeev} that $LM_E(Y,N)$
can be computed as $C_*\bb ZF^E(Y)\otimes\bb Z(N)$, where $\bb Z(N)$ is the 
$S^1$-spectrum obtained from $N$ by taking the reduced free Abelian group in each 
degree.

If $A$ is an Abelian group, we will write $LM_E(Y,A)$ and $\bb ZF^E(Y,A)$ to denote
$LM_E(Y)\otimes A$ and $\bb ZF^E(Y)\otimes A$ respectively.
\end{dfn}

\begin{lem}\label{lapierre}
The following statements are true:

$(1)$ the geometric realization of simplicial stable equivalences between $S^1$-spectra is a stable equivalence;

$(2)$ the geometric realization of simplicial degreewise connective $S^1$-spectra is connective;

$(3)$ if $F_\bullet \to E_\bullet \to B_\bullet$ is a sequence of simplicial $S^1$-spectra
such that in each degree $n$, the sequence
$F_n \to E_n \to B_n$ is a homotopy fibre sequence of spectra (i.e. the composition of the maps is trivial
and the associated map from $F_n$ to the homotopy fiber of $E_n\to B_n$ is a stable equivalence), then the geometric realization
$|F_\bullet| \to |E_\bullet| \to |B_\bullet|$ is a homotopy fibre sequence of spectra. 
\end{lem}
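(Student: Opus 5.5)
We prove the three statements by identifying the geometric realization of a simplicial $S^1$-spectrum with a homotopy colimit and reducing to the corresponding facts about simplicial spaces levelwise. Concretely, for a simplicial spectrum $E_\bullet$ we take $|E_\bullet|$ to be the spectrum whose $m$-th space is the diagonal (equivalently, the realization) of the bisimplicial set $n\mapsto (E_n)_m$. The structure maps are induced levelwise, since realization commutes with $S^1\wedge-$. A first harmless step is to replace $E_\bullet$ levelwise by an $\Omega$-spectrum replacement, or at least by a levelwise cofibrant one. This uses a functorial replacement such as $Q$ or $\mathrm{Ex}^\infty$ applied degreewise, which changes no realization up to stable equivalence once (1) is known for levelwise weak equivalences.

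For (1), note that a map of simplicial spectra that is degreewise a stable equivalence induces, after applying the functorial colimit $\colim_k\Omega^k\mathrm{Ex}^\infty(-)_{m+k}$, degreewise weak equivalences of simplicial spaces. The diagonal of a degreewise weak equivalence of bisimplicial sets is a weak equivalence (the standard realization lemma). So the only point is that realization commutes with this stabilization up to weak equivalence:
\begin{itemize}
\item filtered colimits commute with the diagonal;
\item $\Omega$ commutes with realization for degreewise connected simplicial spaces, by the Bousfield--Friedlander theorem / $\pi_*$-Kan condition.
\end{itemize}
It is cleaner to avoid $\Omega$ altogether and use the characterization $\pi_n(E)=\colim_k\pi_{n+k}(E_k)$. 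Equivalently, we use that stable homotopy groups are a homology theory on spectra. Then the skeletal filtration of $|E_\bullet|$ has filtration quotients $\Sigma^n (E_n/L_nE)$, with $L_n$ the latching object. This gives a spectral sequence $E^1_{p,q}=\pi_q(E_p)$, or $E^2_{p,q}=H_p(\pi_q(E_\bullet))$ after normalization. The spectral sequence is natural and strongly convergent because the filtration is exhaustive and increasing, and colimits of cofibrations commute with $\pi_*$. Note that $\pi_q$ preserves the degeneracy splitting, since degeneracies are split injections. A degreewise stable equivalence gives an $E^1$-isomorphism, hence (1).

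For (2), if each $E_n$ is connective, then $E^2_{p,q}=0$ for $q<0$ and $p<0$, so the abutment $\pi_{p+q}$ vanishes in negative degrees. For (3), since spectra are stable, a homotopy fibre sequence is the same as a homotopy cofibre sequence. Using (1), we replace $F_\bullet\to E_\bullet$ by a degreewise cofibration with cofibre $C_\bullet$; the comparison $C_n\to B_n$ is then a degreewise stable equivalence. Realization commutes with colimits, so $|F_\bullet|\to|E_\bullet|\to|C_\bullet|$ is a cofibre sequence, and $|C_\bullet|\simeq|B_\bullet|$ by (1). Finally, the null composite $F\to B$ matches this, giving the homotopy fibre statement.

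The main obstacle is making the functorial replacements compatible with simplicial structure, and justifying convergence of the skeletal spectral sequence for nonconnective spectra. I would handle the former with functorial cofibrant replacement applied degreewise. For the latter, I would argue directly with homotopy groups commuting with sequential colimits of skeleta, applying induction on skeleta via the five lemma on the cofibre sequences $\mathrm{sk}_{n-1}\to\mathrm{sk}_n\to\Sigma^n(E_n/L_nE)$.
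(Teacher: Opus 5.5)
Your proposal is correct. For (2) and (3) it is essentially the paper's argument. The paper also derives (2) from the spectral sequence $E^1_{p,q}=\pi_p(X_q)\Rightarrow\pi_{p+q}|X_\bullet|$ (quoted from Dugger), using that the simplicial degree is nonnegative. It proves (3) as you do: replace $F_\bullet\to E_\bullet$ by a degreewise cofibration, take the degreewise quotient, observe that the diagonal of this pushout is a pushout along a level cofibration, and compare the quotient with $B_\bullet$ via (1).

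The genuine difference is in (1).

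\textbf{The paper's route.} It takes a cofibrant replacement in the projective (degreewise level) model structure on simplicial spectra. It then cites Hirschhorn for two facts: the realization of such a simplicial object is levelwise equivalent to $\hocolim_{\Delta^{\op}}$, and homotopy colimits of objectwise cofibrant diagrams are invariant under objectwise stable equivalences.

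\textbf{Your route.} You work directly with the skeletal filtration of the diagonal, whose quotients are $S^n\wedge(E_n/L_nE)$. You induct on skeleta, using long exact sequences for level cofibrations and the fact that $\pi_*$ commutes with sequential colimits along level cofibrations. This needs no cofibrant replacement, since latching maps of bisimplicial sets are automatically injective. It also yields, in one stroke, the spectral sequence you reuse for (2), which the paper has to import.

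\textbf{The step that needs an argument.} You only gesture at it with ``degeneracies are split injections'': a degreewise stable equivalence $E_\bullet\to E'_\bullet$ must induce stable equivalences $E_n/L_nE\to E'_n/L_nE'$, i.e.\ $L_nE\to L_nE'$ must be a stable equivalence. This follows by induction on $n$. $L_nE$ is the colimit of a punctured-cube diagram built from the $E_k$ with $k<n$. That colimit is a homotopy colimit because all latching maps are levelwise injective, and the five lemma applied to $L_nE\hookrightarrow E_n\to E_n/L_nE$ finishes the step. Alternatively, you can avoid latching objects entirely by using the fat realization. Its skeletal quotients are $S^n\wedge E_n$ on the nose, and it maps to the diagonal by a levelwise weak equivalence.

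\textbf{Minor corrections.} With the skeletal filtration of the (non-fat) realization, the $E^1$-term is $\pi_q(E_p/L_pE)$, the normalized part, not $\pi_q(E_p)$ as you write. Your opening detours through $\Omega$-spectrum replacements and the Bousfield--Friedlander $\pi_*$-Kan condition play no role in the argument you actually give and can be dropped.
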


\begin{proof}
(1) Let $f_\bullet: X_\bullet\to Y_\bullet$ be a map of simplicial spectra such that each $f_n:X_n\to Y_n$, $n\geq 0$,
is a stable equivalence. Without loss of generality we may assume that $X_n,Y_n$ are cofibrant spectra. Indeed, consider the
level model structure on spectra and
consider the induced model structure on simplicial spectra, in which weak equivalences and fibrations are defined degreewise~\cite[Theorem~11.6.1]{Hir}.
If $\alpha:X^c\to X$ is a cofibrant replacement of the simplicial spectrum $X$ in this model structure, then $\alpha_n$
is a levelwise weak equivalence in each degree $n$ and $X^c_n$ is a cofibrant spectrum by~\cite[Proposition~11.6.3]{Hir}
(we tacitly use here the fact that cofibrant spectra in the level and stable model structures coincide). Applying
the cofibrant replacement functor to $f_\bullet$, we get a map of simplicial spectra
$f^c_\bullet: X^c_\bullet\to Y^c_\bullet$ with $X_n^c,Y_n^c$ cofibrant. As $X_n^c\to X_n$ and $Y_n^c\to Y_n$
are levelwise weak equivalences, the maps of spectra $|X^c_\bullet|\to |X_\bullet|$ and $|Y^c_\bullet|\to |Y_\bullet|$
are levelwise weak equivalences. Therefore $|f_\bullet|$ is a stable equivalence if and only if so is $|f_\bullet^c|$.

So we assume that $f_\bullet$ is a map between simplicial spectra which are cofibrant in each degree.
By~\cite[Corollary~18.7.5]{Hir} $|f_\bullet|: |X_\bullet|\to |Y_\bullet|$ is levelwise weakly equivalent to 
   $$\hocolim_{\Delta^\op} f_\bullet: \hocolim_{\Delta^\op} X_\bullet\to\hocolim_{\Delta^\op} Y_\bullet$$
The latter map is a stable equivalence by~\cite[Corollary~18.5.3]{Hir}.

(2) Let $X_\bullet$ be a simplicial degreewise connective spectrum. By~\cite[Proposition~18.15]{Dug} there is a spectral sequence
   $$E^1_{p,q}=\pi_p(X_q)\Longrightarrow\pi_{p+q}|X_\bullet|$$
where the differentials have the form $d^r:E^r_{p,q}\to E^r_{p+r-1,q-r}$. As $\pi_{<0}(X_q)=0$ for all $q$, our statement follows.
The statement can also be verified by inspecting negative stable homotopy groups for the tower that induces the spectral sequence
--- see the proof of~\cite[Proposition~18.15]{Dug}.

(3) The proof of the first statement shows that the composite map $F_\bullet^c\lra{\sim}F_\bullet\to E_\bullet$ can be factored
as $F_\bullet^c\hookrightarrow E_\bullet^c\lra{\sim}E_\bullet$, in which the left arrow is a cofibration between cofibrant
simplicial spectra. The left arrow is also a cofibration in each simplicial degree by~\cite[Proposition~11.6.3]{Hir}. 
Set $B_\bullet^c:=E^c_\bullet/F^c_\bullet$. As homotopy fibre sequences of spectra coincide with homotopy cofibre ones,
the induced map $B^c_\bullet\to B_\bullet$ is degreewise a stable equivalence due to our assumption on $B_\bullet$.
It follows from the first statement that $|F_\bullet| \to |E_\bullet| \to |B_\bullet|$ is a homotopy fibre sequence of spectra
if $|F^c_\bullet| \hookrightarrow |E^c_\bullet| \to |B^c_\bullet|$ is a homotopy cofibre sequence of spectra. But the latter sequence is
a homotopy cofibre sequence in the level model structure of spectra as $B_\bullet^c$ is a pushout of the cofibration
$F_\bullet^c\hookrightarrow E_\bullet^c$, and hence $|B_\bullet^c|=\diag(B_\bullet^c)$ is a pushout of the level
cofibration $\diag(F^c_\bullet)\hookrightarrow\diag(E^c_\bullet)$.
\end{proof}

One of the approaches to constructing the Atiyah--Hirzebruch spectral sequence (ATHSS) 
associated with a generalized (Eilenberg--Steenrod) cohomology theory $E$ is via the Postnikov
tower applied to $E$ --- see~\cite{Mau}. In the motivic world, we can apply the same strategy to
constructing the motivic Atiyah--Hirzebruch spectral sequence (MATHSS).

\begin{thm}[MATHSS for framed motives]\label{ovechkin}
Suppose $N$ is a connective $S^1$-spectrum and $E$ is a motivic Thom $T$-spectrum. 
Let $X,U\in\Sm_k$, $Z$ be closed in $X$ and $Y=X/X-Z$.

$(1)$ The Postnikov tower~\eqref{tower} yields a tower in $SH$ of the form
   \begin{equation}\label{mottower} 
    \xymatrix{\cdots\ar[r]&M_E(Y,N^2)(U)\ar[d]\ar[r]&M_E(Y,N^1)(U)\ar[d]\ar[r]&M_E(Y,N)(U)\ar[d]\\
                                     &LM_E(Y,\pi_2(N))(U)[2]&LM_E(Y,\pi_1(N))(U)[1]&LM_E(Y,\pi_0(N))(U)}
   \end{equation}
with each $M_E(Y,N^\ell)(U)$ being $(\ell-1)$-connected.

$(2)$ If the field $k$ is perfect 
the tower~\eqref{mottower} yields a tower of $\bb A^1$-local motivic $S^1$-spectra in $SH_{S^1}(k)$
   \begin{equation}\label{shtower} 
    \xymatrix{\cdots\ar[r]&M_E(Y,N^2)\ar[d]\ar[r]&M_E(Y,N^1)\ar[d]\ar[r]&M_E(Y,N)\ar[d]\\
                                     &LM_E(Y,\pi_2(N))_{\nis}[2]&LM_E(Y,\pi_1(N))_{\nis}[1]&LM_E(Y,\pi_0(N))_{\nis}}
   \end{equation}
with each $M_E(Y,N^\ell)$ being locally $(\ell-1)$-connected.

$(3)$ The tower~\eqref{shtower} yields a strongly convergent spectral sequence
   $$E^2_{p,q}=H^{-p}_{\nis}(U,C_*\bb ZF^E(Y,\pi_q(N))_{\nis})\Longrightarrow\pi_{p+q}(M_E(Y,N)^f(U)),$$
where $M_E(Y,N)^f$ is a stable local fibrant replacement of $M_E(Y,N)$.
\end{thm}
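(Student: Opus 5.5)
We build the tower levelwise from Theorem~\ref{oshie} and then globalize it.

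\emph{Part (1).} Fix $U$ and a simplicial degree $m$ of the Suslin complex. For each $n$, the $\Gamma$-space $K\mapsto\Fr_n^E(U\times\Delta^m_k,Y\w K)=\uhom(\PP^{\w n},Y\w K\w E_n)(U\times\Delta^m_k)$ is a $\Gamma$-space of the type $\Gamma\Hom_{\Shv_\bullet}(X/X',Y'/V)$ treated in Section~\ref{hutson}. Indeed, we write $E_n=\colim_iV_{n,i}/(V_{n,i}-Z_{n,i})$ and use that $Y\w E_{n,i}$ is again a Thom space of the form $(X\times V_{n,i})/(\text{open})$. Pass to the filtered colimit over $i$ and over $n$; $\Gamma$-space evaluation commutes with such colimits. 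Theorem~\ref{carbery} (or Theorem~\ref{protas} together with the colimit argument in its proof) then shows that the spectrum $\Fr^E(U\times\Delta^m_k,Y\otimes N)$ is stable equivalent to $F^E(U\times\Delta^m_k,Y)\w N$. Hence, by Corollary~\ref{roy}, it is an exact, connectivity-preserving functor of $N$. Applying it to the Postnikov tower~\eqref{tower} of $N$ gives, as in Theorem~\ref{oshie}, a tower in each simplicial degree $m$. Its layers are $\bb ZF^E(U\times\Delta^m_k,Y)\otimes EM(\pi_\ell N)[\ell]$, and the $\ell$-th stage is $(\ell-1)$-connected.

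Next we take the diagonal (geometric realization) over $m$. Lemma~\ref{lapierre}(3) keeps the cofibre sequences, Lemma~\ref{lapierre}(1) keeps the identifications of layers, and Lemma~\ref{lapierre}(2) keeps the connectivity, after shifting by $\ell$. The realized layers are $C_*\bb ZF^E(U,Y)\otimes\pi_\ell(N)[\ell]=LM_E(Y,\pi_\ell N)(U)[\ell]$. This uses the identification $LM_E(Y,N)=C_*\bb ZF^E(Y)\otimes\bb Z(N)$ and the fact that $\bb Z(EM(A))\simeq EM(A)$ stably. Naturality in $U$ makes~\eqref{mottower} a tower of presheaves of spectra.

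\emph{Part (2).} We Nisnevich-sheafify. Cofibre sequences and local connectivity are preserved, and the layers become $LM_E(Y,\pi_\ell N)_{\nis}[\ell]$. The key point, and the main obstacle, is $\bb A^1$-locality. Its homotopy presheaves are $H_*(C_*\bb ZF^E(Y,A))$, and these are $\bb A^1$-invariant and quasi-stable $\bb ZF_*(k)$-presheaves. Here quasi-stability for Thom spectra is the one established in~\cite{GN}, and I would cite that work for the framed structure on $\bb ZF^E$. By the strict $\bb A^1$-invariance theorem quoted in the Notation (with $k$ perfect), the associated sheaves are strictly $\bb A^1$-invariant, so each layer is $\bb A^1$-local. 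The stages $M_E(Y,N^\ell)$ are then $\bb A^1$-local by induction using the cofibre sequences. For the limit we use connectivity: $M_E(Y,N)\to M_E(Y,N/N^{\ell})$ is an isomorphism on $\pi_{<\ell}$, and a homotopy limit of $\bb A^1$-local objects is $\bb A^1$-local. The passage to the limit relies on the fact that a finite Nisnevich cohomological dimension bounds the effect on $\pi_j$.

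\emph{Part (3).} We apply sections over $U$ of stable local fibrant replacements to~\eqref{shtower}. This gives an exact couple with $E_1$ built from $H^{-p}_{\nis}(U,LM_E(Y,\pi_qN)_{\nis})$. Reindexing turns it into $E^2_{p,q}=H^{-p}_{\nis}(U,C_*\bb ZF^E(Y,\pi_q(N))_{\nis})$, by the hypercohomology spectral sequence of the layers. Strong convergence follows from two facts. First, each stage $M_E(Y,N^\ell)^f(U)$ is $(\ell-d-1)$-connected, where $d=\dim U$ is the Nisnevich cohomological dimension. Second, $E^2_{p,q}=0$ unless $-d\le p\le0$. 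Thus the filtration on each $\pi_{p+q}$ is finite, and the $\lim^1$ terms vanish.
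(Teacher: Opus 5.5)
Your proposal is correct in substance. Parts (1) and (3) follow the paper's strategy, with organizational differences:
\begin{enumerate}
\item[(1)] You apply Theorem~\ref{protas} directly to each unstable level $\Fr^E_n$ (a filtered colimit of $\Gamma$-spaces $\Gamma\Hom_{\Shv_\bullet}(X/X',Y'/V)$) and pass to the colimit over $i$ and $n$. The paper instead replaces $E$ by $Y\wedge E$, writes $M_E(\pt,N)=\colim_k\uhom(\PP^{\w k},M_{fr}(E_k,N))$ via [GN, Cor.~9.3, Lemma~9.4], and applies Theorem~\ref{oshie} to the $\sigma$-stabilized correspondences $\Fr(-\times\PP^{\w k},E_k)$. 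Both versions tacitly use that the structure maps of $E$ send connected-support correspondences to connected-support ones; this is already built into $\bb ZF^E=\colim_n\bb ZF^E_n$.
\item[(3)] You check strong convergence by hand, where the paper quotes [FS, Cor.~6.1.1].
\end{enumerate}

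The genuinely different step is (2). The paper gets $\bb A^1$-locality of all stages at once. It asserts that each $\uhom(\PP^{\w k},M_{fr}(E_k,N))$, hence $M_E(Y,N)$ (and likewise with $N^\ell$ in place of $N$), has strictly $\bb A^1$-invariant homotopy sheaves, and then invokes Morel's Theorem~6.2.7. You need strict $\bb A^1$-invariance only for the linear layers $C_*\bb ZF^E(Y,A)$, and then bootstrap:
\begin{itemize}
\item the truncations $M_E(Y,N/N^\ell)$ are finite extensions of layers, hence $\bb A^1$-local;
\item $M_E(Y,N)^f$ is their homotopy limit, because the fibres $M_E(Y,N^\ell)^f(U)$ are $(\ell-d-1)$-connected for $d=\dim U$;
\item $\bb A^1$-local objects are closed under homotopy limits;
\item the stages then follow from the fibre sequences.
\end{itemize}
This uses less input about nonlinear framed motives with coefficients. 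The cost is a convergence step, but it is the same estimate you need in (3) anyway.

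Two side remarks in your write-up are false, though neither is used essentially.

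\emph{First}, $\bb Z(EM(A))\not\simeq EM(A)$. Levelwise linearization of $HA$ is $H\bb Z\wedge HA$, whose homotopy is $H_*(HA;\bb Z)$; for example $\pi_2(H\bb Z\wedge H\bb Z)\cong\bb Z/2$. Nothing of this sort is needed. Theorem~\ref{oshie} produces layers $\bb ZF^E\otimes L^\ell$ using the simplicial abelian group structure of $L^\ell=EM(\pi_\ell N)[\ell]$ itself. Since the terms of $C_*\bb ZF^E(U,Y)$ are free abelian groups, the realization is the Eilenberg--MacLane spectrum of $C_*\bb ZF^E(U,Y)\otimes\pi_\ell N$ shifted by $\ell$. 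By the definition $LM_E(Y,A):=LM_E(Y)\otimes A$, this is exactly $LM_E(Y,\pi_\ell N)(U)[\ell]$.

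\emph{Second}, $E^2_{p,q}$ does not vanish for $p>0$. The complex $C_*\bb ZF^E(Y,A)_{\nis}$ has homology in positive degrees, so its hypercohomology in negative degrees is generally nonzero. For $U=\spec k$ one has $E^2_{p,q}=H_p(C_*\bb ZF^E(Y,\pi_qN)(k))$, which lives in $p\geq 0$ and is typically nonzero for $p>0$. Only $p\geq -d$ and $q\geq 0$ hold. Strong convergence is unaffected: in total degree $n$ only $0\leq q\leq n+d$ contribute. Moreover, the $(\ell-d-1)$-connectivity of the stages alone makes the filtration of $\pi_n$ finite and kills $\lim^1$.
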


\begin{proof}
(1) If $E=S_T$ then our statement follows from Theorem~\ref{oshie}, Lemma~\ref{lapierre} and Corollary~\ref{roy}.
If $E$ is a motivic Thom $T$-spectrum, so is the $T$-spectrum $Y\wedge E$. For this 
reason we may assume $Y=\pt$. It follows from~\cite[Corollary~9.3]{GN} that $M_E(\pt,N)=\colim_k M_{L_kE}(\pt,N)$, where $L_kE$ is the $k$-th layer
of $E$. Also, \cite[Lemma~9.4]{GN} implies $M_{L_kE}(\pt,N)=\uhom(\PP^{\w k},M_{fr}(E_k,N))$
for any $k\geqslant 0$. As $\PP^{\w k}$ is of the form $X/X'$, we can apply results of Section~\ref{hutson} to
$\uhom(\PP^{\w k},\Fr(E_k))=\Fr(-\times\PP^{\w k},E_k)$. Our statement for $M_{L_kE}(\pt,N)$
follows from Theorem~\ref{oshie}, Lemma~\ref{lapierre} and Corollary~\ref{roy}. Since filtered colimits respect stable weak
equivalences and (co)fiber sequences of spectra, our statement follows for $M_E(\pt,N)$ as well.

(2) As above $M_E(Y,N)=\colim_k\uhom(\PP^{\w k},M_{fr}(E_k,N))$.
As $k$ is perfect, each motivic spectrum $\uhom(\PP^{\w k},M_{fr}(E_k,N))$
has strictly $\bb A^1$-invariant sheaves of stable homotopy groups, and hence so does $M_E(Y,N)$.
By~\cite[Theorem~6.2.7]{Mor} its local fibrant replacement is motivically fibrant.
We see that the motivic spectra of the tower are $\bb A^1$-local.
Our statement now follows from the first one.

(3) By the second statement each $M_E(Y,N^q)$ is $\bb A^1$-local, hence
$M_E(Y,N^q)^f$ is motivically fibrant by~\cite[Theorem~6.2.7]{Mor}. If $U\in\Sm_k$ is of dimension $d$, then
$M_E(Y,N^q)^f(U)$ is $(q-d-1)$-connected
in $Sp_{S^1}$ by~\cite[Lemma~4.3.1]{Mor} as $M_E(Y,N^q)$ is locally $(q-1)$-connected,. By~\cite[Corollary~6.1.1]{FS} the tower~\eqref{shtower}
produces a strongly convergent spectral sequence
   $$E^2_{p,q}=\pi_{p+q}(LM_E(Y,\pi_q(N))_{\nis}^f(U)[q])\Longrightarrow\pi_{p+q}((M_E(Y,N)^f(U)).$$
It remains to observe that $\pi_{p+q}(LM_E(Y,\pi_q(N))_{\nis}^f(U)[q])=H^{-p}_{\nis}(U,C_*\bb ZF^E(Y,\pi_q(N))_{\nis})$.
\end{proof}

\begin{dfn}\label{haapsalu}
Let $E$ be a motivic $T$-spectrum. Let $\gmp$ denote the cone of the $1$-section 
$\mathrm{Spec}(k)\to\bb G_{m} $ in $\Delta^{\op}\Fr_0(k)$.
For every integer $q\geq 0$ the motivic complex $\bb Z^E(q)$ is defined
as the following (bounded above) cochain complex of sheaves with framed transfers:
   $$\bb Z^E(q)=C_*\bb ZF^E(\bb G^{\w q})_{\nis}[-q]$$
(the shift is cohomological).
If $A$ is any other Abelian group then $A^E(q) =C_*\bb ZF^E(\bb G^{\w q},A)_{\nis}[-q]$ is another cochain complex of
presheaves with framed transfers. If $q<0$ we set 
   $$A^E(q):=\uhom(\bb G^{\w |q|},A^E(0))[-q].$$
   
The {\it $E$-framed motivic cohomology groups $H^{p,q}_E(X,A)$ with coefficients in $A$} are defined to
be the hypercohomology of the motivic complexes $A^E(q)$ with respect to the Nisnevich
topology:
   $$H^{p,q}_E(X,A):=H_{\nis}^p(X,A^E(q)).$$
If $E=S_T$ we will write $A_{fr}(q)$ for $A^E(q)$ and $H^{p,q}_{fr}(X,A):=H_{\nis}^p(X,A_{fr}(q)).$
\end{dfn}

For $p,n\in\bb Z$ let $\pi^{\bb A^1}_{p,n}(E)$ be the Nisnevich sheaf on $\Sm_k$ associated to the presheaf
   \begin{equation}\label{bigraded}
    U\longmapsto \pi_{p,n}(E)(U)=SH(k)(U_{+}\wedge S^{p-n}\wedge\gmpn, E).
   \end{equation}
Recall that $E$ is connective if $\pi^{\bb A^1}_{p,n}(E)=0$ for all $p<n$.

The following theorem is an application of Theorem~\ref{ovechkin}.

\begin{thm}[MATHSS for motivic Thom spectra]\label{mathssthom}
Suppose $k$ is perfect and $E$ is a symmetric Thom $T$-spectrum with
the bounding constant $d\leq 1$ and contractible alternating group action.
There is a strongly convergent spectral sequence
   $$E^2_{p,q}=H^{-p-n,-n}_{E}(U,\pi_q^s(-n))\Longrightarrow\pi_{p+q+n,n}(E)(U),\quad n\in\bb Z,$$
where $\pi^s_q$ is the $q$-th stable homotopy group of the classical sphere spectrum $\bb S$.
In particular, there is a strongly convergent spectral sequence of Nisnevich sheaves
   $$E^2_{p,q}=H^{-p-n,-n}((\pi_q^s)^E(-n))\Longrightarrow\pi^{\bb A^1}_{p+q+n,n}(E),\quad n\in\bb Z.$$
\end{thm}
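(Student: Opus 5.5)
We apply Theorem~\ref{ovechkin} with $N=\bb S$ the classical sphere spectrum (connective, with $\pi_q(N)=\pi_q^s$), $Y=\pt$, and $\mathcal X=\bb G^{\w m}$ for suitable $m\geq 0$; then we identify the abutment with bigraded homotopy of $E$. The key input beyond the present paper is the main theorem of~\cite{GN}. For a symmetric Thom spectrum $E$ with bounding constant $d\leq 1$ and contractible alternating group action over a perfect field, it gives a computation of $E$ by framed motives. Concretely, the $S^1$-spectrum $M_E(\bb G^{\w m},\bb S)$ (after stable local fibrant replacement) is a motivically fibrant model for the $S^1$-spectrum $\Omega^\infty_{\gmp}(\Sigma^\infty_{\gmp}\,\bb G^{\w m}\w E)$ in $SH_{S^1}(k)$. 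Equivalently, the $T$-spectrum built from the $M_E(\bb G^{\w m},\bb S)$, $m\geq 0$, is a fibrant replacement of $E$ in $SH(k)$.

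For $n\leq 0$, set $m=-n=|n|$. Then
$$\pi_{s}(M_E(\bb G^{\w m},\bb S)^f(U))\cong SH(k)(U_+\w S^{s}, E\w\bb G^{\w m})\cong\pi_{s+n,n}(E)(U),$$
using that $S^{s}$ has weight $0$ and that smashing the target with $\gmp^{\w m}$ shifts the weight by $m$. Put $s=p+q$. Theorem~\ref{ovechkin}(3) gives a strongly convergent spectral sequence with
$$E^2_{p,q}=H^{-p}_{\nis}(U,C_*\bb ZF^E(\bb G^{\w m},\pi_q^s)_{\nis}).$$
By Definition~\ref{haapsalu}, $C_*\bb ZF^E(\bb G^{\w m},A)_{\nis}=A^E(m)[m]$, so
$$H^{-p}_{\nis}(U,A^E(m)[m])=H^{-p+m,m}_E(U,A)=H^{-p-n,-n}_E(U,A).$$
This is exactly the claimed $E^2$-term, and the abutment is $\pi_{p+q+n,n}(E)(U)$.

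For $n>0$, we use the convention $A^E(-n)=\uhom(\bb G^{\w n},A^E(0))[n]$. We apply Theorem~\ref{ovechkin} with $\mathcal X=\pt$ and evaluate on $U\times\bb G^{\w n}$. More precisely, since all towers consist of $\bb A^1$-local spectra with framed transfers, we pass to the cotensor $\uhom(\bb G^{\w n},-)$ applied to the tower~\eqref{shtower}. The functor $\uhom(\bb G^{\w n},-)$ preserves fibre sequences of motivically fibrant spectra, so the tower stays a tower with the same convergence properties. Strong convergence needs the bounded connectivity estimate: $\uhom(\bb G^{\w n},M_E(\pt,N^q))^f(U)$ is a retract of the value on $U\times\bb G_m^{n}$, hence $(q-\dim U-n-1)$-connected, so \cite[Corollary~6.1.1]{FS} still applies. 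Its layers give
$$E^2_{p,q}=H^{-p}_{\nis}\bigl(U,\uhom(\bb G^{\w n},\pi_q^s{}^E(0))\bigr)=H^{-p-n,-n}_E(U,\pi^s_q).$$
The abutment becomes $SH(k)(U_+\w\bb G^{\w n}\w S^{p+q},E)=\pi_{p+q+n,n}(E)(U)$.

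The sheaf version follows by Nisnevich sheafifying in $U$. Strongly convergent spectral sequences of presheaves give strongly convergent spectral sequences of sheaves, since sheafification is exact. This yields the second displayed spectral sequence.

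The main obstacle is the identification of the abutment, i.e.\ that $M_E(\bb G^{\w m},\bb S)^f$ computes $E\w\bb G^{\w m}$ in all weights. This is where the hypotheses on $E$ enter, via~\cite{GN}. We first check that the $E$-framed motive used here, the $\Gamma$-space evaluated at $\bb S$, agrees with the framed motive $M_E$ of~\cite{GN}; this is by definition. We then check that the cotensor with $\bb G^{\w n}$ for $n>0$ is compatible with the weight shifts in $SH(k)$, via the cancellation-type $\gmp$-loop statement of~\cite{GN}.
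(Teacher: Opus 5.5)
Your proposal is correct and follows the paper's route. The paper invokes \cite[Theorem~9.13]{GN} to get that the bispectrum $(M_E(\pt)^f,M_E(\gmp)^f,M_E(\gmp^{\w 2})^f,\ldots)$ is a motivically fibrant model of $E$, then reads off the spectral sequence from Theorem~\ref{ovechkin} with $N=\bb S$; your handling of weights $n>0$ via $\uhom(\gmp^{\w n},-)$ on the tower, with the connectivity bound coming from $U\times\bb G_m^{n}$, only makes explicit a step the paper leaves implicit.
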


\begin{proof}
By~\cite[Theorem~9.13]{GN} the $(S^1,\gmp)$-bispectrum
\[M_E^{\mathbb G}(\pt)^f:=(M_E(\pt)^f,M_E(\gmp)^f,M_E(\gmp^{\w 2})^f,\ldots)\]
is motivically fibrant and represents the $T$-spectrum $E$ in
the category of bispectra, where ``$f$'' refers to stable local
fibrant replacements of $S^1$-spectra. Our theorem now follows from Theorem~\ref{ovechkin}.
\end{proof}

It was shown in~\cite{RSO} that there is an exact sequence of Nisnevich sheaves
   $$0\to K^M_{2-n}/24\to\pi_{n+1,n}^{\bb A^1}(S_T)\to f_0(\mathbf{KQ}),\quad n\in\bb Z.$$
Here $f_0(\mathbf{KQ})$ is the eﬀective cover of hermitian $K$-theory arising in the slice
filtration of $SH(k)$. 

The preceding theorem yields another exact sequence for $\pi_{n+1,n}^{\bb A^1}(E)$. More precisely, the following statement is true.

\begin{cor}\label{sandin}
Under the assumptions of Theorem~\ref{mathssthom} there is an exact sequence 
   $$H^{-n}(\bb Z^E(-n))/2\to\pi_{n+1,n}^{\bb A^1}(E)\to H^{-1-n}(\bb Z^E(-n))\to 0,\quad n\in\bb Z,$$
of Nisnevich sheaves. If $\charr(k)\ne 2$ then there is an exact sequence of Abelian groups
   $$K^{MW}_{-n}(k)/2\to\pi_{n+1,n}^{\bb A^1}(S_T)(k)\to H^{-1-n}(\bb Z_{fr}(-n))(k)\to 0,\quad n\in\bb Z.$$
\end{cor}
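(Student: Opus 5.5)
We will read off the exact sequence from the low-degree part of the spectral sequence of Theorem~\ref{mathssthom}, taking total degree $p+q=1$ in the sheaf version. The coefficient groups are the classical stable stems: $\pi^s_0=\bb Z$, $\pi^s_1=\bb Z/2$, and $\pi^s_q=0$ for $q<0$.

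The first task is to determine which $E^2$-terms can contribute in total degree $1$. These are
\[
E^2_{1,0}=H^{-1-n}(\bb Z^E(-n))
\quad\text{and}\quad
E^2_{0,1}=H^{-n}((\bb Z/2)^E(-n)),
\]
together with $E^2_{p,q}$ for $q\ge 2$ and $p=1-q\le -1$. The latter are sheaves $H^{-p-n}((\pi_q^s)^E(-n))$ with $-p-n\ge 1-n$.

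Here we need a vanishing statement: $H^{i}(A^E(-n))=0$ for $i>-n$. This is the analogue of the fact that motivic cohomology sheaves vanish above the weight. It holds because $A^E(q)$ is $C_*\bb ZF^E(\bb G^{\w q},A)_{\nis}$ shifted cohomologically by $-q$. That complex is concentrated in cohomological degrees $\le 0$ before the shift, so its cohomology sheaves live in degrees $\le q$. For $q<0$ we use the $\uhom(\bb G^{\w|q|},-)$ description, and the same bound follows. We apply this with weight $-n$: the terms with $q\ge 2$ vanish, since $-p-n=q-1-n>-n$.

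The same vanishing handles differentials. The differentials $d^r\colon E^r_{p,q}\to E^r_{p+r-1,q-r}$ leaving $E_{1,0}$ land in $q<0$, which is zero. The differentials entering $E_{1,0}$ come from $E_{2-r,r-1}$ with $r\ge2$. For $r=2$ this is $E_{0,1}$ in total degree $1$, so it is not a differential into $E_{1,0}$. Let me recompute with the degree conventions. A differential into total degree $1$ comes from total degree $2$, namely $E_{2-r+1,\,r-1}$ with cohomological index $r-2-n$. For $r\ge3$ this exceeds $-n$ and so vanishes. For $r=2$ the source is $E_{1,1}$, with index $-1-n$; it need not vanish, but its target $E_{2,-1}$ is zero.

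As a result, $E^\infty_{1,0}=E^2_{1,0}$, and $E^\infty_{0,1}$ is the quotient of $E^2_{0,1}$ by the image of $d^2$ from $E_{-1,2}$ (the outgoing $d^2$ lands in $E_{1,-1}=0$). Strong convergence then gives a two-step filtration of $\pi^{\bb A^1}_{n+1,n}(E)$, with sub $E^\infty_{0,1}$ and quotient $E^\infty_{1,0}$. We still need to identify which term is the sub, that is, the direction of the tower filtration; the $N^q$ tower makes higher $q$ the deeper piece, so $E_{0,1}$ is the sub. This yields the surjection $\pi^{\bb A^1}_{n+1,n}(E)\to H^{-1-n}(\bb Z^E(-n))$ with kernel a quotient of $H^{-n}((\bb Z/2)^E(-n))$.

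It remains to show $H^{-n}((\bb Z/2)^E(-n))\cong H^{-n}(\bb Z^E(-n))/2$. This is the top cohomology sheaf. The complex $C_*\bb ZF^E(\bb G^{\w q})\otimes\bb Z/2$ is the derived tensor product, since the terms are free. Top-degree homology is right exact, so there is no $\Tor$ contribution from above, and sheafification is exact.

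For the field statement we take $E=S_T$ and evaluate at $k$, which is exact on stalks. We need $H^{-n}(\bb Z_{fr}(-n))(k)\cong K^{MW}_{-n}(k)$. For $n\le 0$ this is the known computation of $H^{q,q}_{fr}$ of a field as Milnor--Witt $K$-theory (Neshitov, Ananyevskiy--Neshitov), which requires $\charr k\neq2$. For $n>0$ it follows via contraction, using the $\uhom(\bb G^{\w n},-)$ definition together with $(K^{MW}_0)_{-n}=K^{MW}_{-n}$.

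The main obstacle is the vanishing-above-weight step for negative weights and the bookkeeping of which differentials hit $E_{1,0}$ and $E_{0,1}$. These I would check carefully against the grading of Theorem~\ref{ovechkin}.
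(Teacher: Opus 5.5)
Your plan for the first sequence is the paper's: read off total degree $1$ of Theorem~\ref{mathssthom} using $\pi^s_1=\bb Z/2$. Two of your steps are details the paper leaves implicit: the vanishing of $E^2_{p,q}$ for $p<0$ (cohomology above the weight), and the identification $H^{-n}((\bb Z/2)^E(-n))\cong H^{-n}(\bb Z^E(-n))/2$.

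\textbf{The error.} The differential analysis, which is exactly the step that yields the surjection, is wrong as written. You use $d^r\colon E^r_{p,q}\to E^r_{p+r-1,q-r}$, which is the convention of the simplicial spectral sequence in Lemma~\ref{lapierre}(2), not that of the tower spectral sequence. With your convention there is a $d^2\colon E^2_{0,2}\to E^2_{1,0}$. Its source $H^{-n}((\pi^s_2)^E(-n))$ sits exactly at the weight, so your vanishing does not kill it; for $E=S_T$ and $n=0$ it is $H^0(\bb Z_{fr}(0))/2$, which is nonzero in general. Then $E^\infty_{1,0}$ could be a proper subsheaf of $E^2_{1,0}$, and surjectivity onto $H^{-1-n}(\bb Z^E(-n))$ would not follow. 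Your ``recomputation'' examines $d^2$ out of $E_{1,1}$, which has nothing to do with $E_{1,0}$.

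\textbf{The fix.} In the tower~\eqref{shtower}, the connecting map from the $q$-th layer to $M_E(Y,N^{q+1})[1]$, followed by projection to the next layer, raises the layer index. So the differentials are of Atiyah--Hirzebruch type, $d^r\colon E^r_{p,q}\to E^r_{p-r,q+r-1}$. Then:
\begin{enumerate}
\item nothing enters $E_{1,0}$, since the sources have $q=1-r<0$;
\item nothing leaves $E_{1,0}$ or $E_{0,1}$, since the targets have $p<0$ and vanish by your weight bound;
\item the only differential touching total degree $1$ is $d^2\colon E^2_{2,0}\to E^2_{0,1}$.
\end{enumerate}
Hence $E^\infty_{1,0}=E^2_{1,0}$ and $E^\infty_{0,1}$ is a quotient of $E^2_{0,1}$. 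Your identification of $E^\infty_{0,1}$ as the subobject is correct, so the argument then goes through.

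It is simpler still to avoid spectral-sequence conventions and use the tower directly, as the paper does in the proof of Theorem~\ref{mathssapp}. Local $1$-connectivity of $M_E(Y,N^2)$ gives $\pi^{\nis}_1M_E(Y,N^1)\cong\pi^{\nis}_0LM_E(Y,\bb Z/2)_{\nis}$. The cofibre sequence $M_E(Y,N^1)\to M_E(Y,N)\to LM_E(Y,\bb Z)_{\nis}$, together with $\pi^{\nis}_0M_E(Y,N^1)=0$, then gives the exact sequence at once.

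\textbf{The field statement.} Here you take a different route from the paper. You use Neshitov's computation of $H^{q,q}_{fr}$ of fields as $K^{MW}_q$, plus contraction for negative weights. The paper instead uses that $\pi^{\bb A^1}_{-n,-n}(S_T)(k)=H^n(\bb Z_{fr}(n))(k)$ for every $n\in\bb Z$. This comes from the proof of Garkusha--Panin's Corollary~11.3; it is also the total-degree-$0$ part of the same spectral sequence, where only $E_{0,0}$ contributes. The paper then applies Morel's theorem $\pi_{-n,-n}(S_T)(k)\cong K^{MW}_n(k)$.

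The paper's route is uniform in $n$. Yours needs, for $n>0$, that the sheaf $H^0(\bb Z_{fr}(0))$ itself, not just its values on fields, is the Milnor--Witt sheaf $K^{MW}_0$, because evaluating a contraction at $k$ uses sections over $\bb G_m^{\times n}$. That identification is available, for instance from the paper's route at $n=0$ on the sheaf level, but it is an extra step you should state.
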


\begin{proof}
The first exact sequence immediately follows from Theorem~\ref{mathssthom} if we recall that $\pi_1^s(\bb S)=\bb Z/2\bb Z$.
The proof of~\cite[Corollary~11.3]{GP1} shows that
   $$\pi_{-n,-n}^{\bb A^1}(S_T)(k)=H^{n}(\bb Z_{fr}(n))(k),\quad n\in\bb Z.$$
By Morel's theorem~\cite{Mor1} the left hand side Abelian group is $K^{MW}_n(k)$. The exactness of the second sequence is now obvious.
\end{proof}

\begin{rem}
We raise a question whether both exact sequences of Corollary~\ref{sandin} are short exact.
\end{rem}

By the Linearization Theorem of~\cite{GNP} and~\cite[Lemma~9.7]{GN} we have that for each $n\in\bb Z$ the morphism
$\lambda_n:M_E(\gmpn)\to LM_E(\gmpn)$ ($\lambda_n:=\uhom(\gmpn,\lambda_0)$ for $n<0$)
 induces isomorphisms of Nisnevich sheaves
   $$\pi_{n+*,n}^{\bb A^1}(E)_{\bb Q}\cong H^{-*-n}(\bb Q^E(-n)).$$

As an application of Theorem~\ref{mathssthom} we have the following refinement 
of these isomorphisms showing which primes are enough to invert
in order to get similar isomorphisms in each range $[1,\ldots,\ell]$. Recall that
$\pi_{t,s}^{\bb A^1}(E)=0$ for $t<s$ and $\pi_{n,n}^{\bb A^1}(E)\cong H^{-n}(\bb Z^E(-n))$, $n\in\bb Z$.

\begin{thm}\label{mathssapp}
Suppose $k$ is perfect and $E$ is a symmetric Thom $T$-spectrum with
the bounding constant $d\leq 1$ and contractible alternating group action.
For any $\ell\geq 1$ let $\{p_1,\ldots,p_{t(\ell)}\}$ be the set of all prime divisors
of orders of the finite Abelian groups $\pi_1^s,\ldots,\pi_{\ell}^s$. Then for any integer $n$ the morphism
$M_E(\gmpn)\to LM_E(\gmpn)$ induces isomorphisms of Nisnevich sheaves
   $$\pi_{n+\ell,n}^{\bb A^1}(E)[p_1^{-1},\ldots,p_{t(\ell)}^{-1}]\cong H^{-\ell-n}(\bb Z[p_1^{-1},\ldots,p_{t(\ell)}^{-1}]^E(-n)).$$
\end{thm}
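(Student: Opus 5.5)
Write $R=\bb Z[p_1^{-1},\ldots,p_{t(\ell)}^{-1}]$. The idea is to localize the spectral sequence of Theorem~\ref{mathssthom} at $R$ and show that, in total degrees up to $\ell$, only the row $q=0$ survives.

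First I would apply Theorem~\ref{ovechkin} with coefficients $N=\bb S_R$, the Moore spectrum of $R$, rather than $N=\bb S$. This gives a spectral sequence of Nisnevich sheaves
$$E^2_{p,q}=H^{-p-n}((\pi^s_q\otimes R)^E(-n))\Longrightarrow\pi^{\bb A^1}_{p+q+n,n}(E)\otimes R.$$
For this I need two facts.
\begin{itemize}
\item The framed motive construction commutes with localization. Since $R$ is a filtered colimit of copies of $\bb Z$ under multiplication by products of the $p_i$, and the $\Gamma$-space evaluation commutes with filtered colimits, we get $M_E(\gmpn,\bb S_R)\simeq M_E(\gmpn)\otimes R$.
\item The sheaves $\pi_*$ of the latter are $\pi^{\bb A^1}_{*+n,n}(E)\otimes R$. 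This follows from the bispectrum identification \cite[Theorem~9.13]{GN}, as in the proof of Theorem~\ref{mathssthom}.
\end{itemize}
Also $\pi_q(\bb S_R)=\pi^s_q\otimes R$, and this vanishes for $1\le q\le\ell$, because each $p_i$ is inverted and the groups $\pi^s_q$ are finite.

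Next comes the degree bookkeeping. The rows $q=1,\ldots,\ell$ vanish, so a nonzero $E^2_{p,q}$ with $q\ge 1$ must have $q\ge\ell+1$. By Theorem~\ref{ovechkin}, $M_E(Y,N^{q})$ is locally $(q-1)$-connected, so on stalks (Henselian local schemes) the sheaf-level tower is just a Postnikov tower of sheaves. In sheaf terms, the map $M_E(\gmpn,\bb S_R)\to M_E(\gmpn,H R)$ has fiber $M_E(\gmpn,\bb S_R^{1})$. Here $\bb S_R^1=\tau_{\ge1}\bb S_R$, and by the vanishing above it is in fact $\ell$-connected. Corollary~\ref{roy}, together with Theorem~\ref{oshie} and Lemma~\ref{lapierre}, then shows that this fiber is locally $\ell$-connected. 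The long exact sequence of homotopy sheaves therefore gives an isomorphism on $\pi_j$ for $j<\ell$ and a surjection on $\pi_\ell$. To get an isomorphism at $j=\ell$ I need injectivity there, which requires the fiber's $\pi_\ell$ to vanish, i.e. local $(\ell+1)$-connectedness rather than just $\ell$.

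This is the step I expect to be the main obstacle: the bound must be sharpened by one. I would handle it by using the precise form of the Postnikov layers. The first nonzero homotopy of $\tau_{\ge1}\bb S_R$ sits in degree $\ell+1$, so it is exactly $\ell$-connected. Its framed motive's $\pi_j$ is $LM_E(\gmpn,\pi^s_{\ell+1}\otimes R)[\ell+1]$ in degree $j$, and this vanishes for $j\le\ell$ because $C_*\bb ZF^E$ is connective. So the fiber is locally $\ell$-connected in the sense that its homotopy sheaves vanish in degrees $\le\ell$. This gives the isomorphism in degree $\ell$ as well.

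Finally, I identify the target $M_E(\gmpn,HR)=LM_E(\gmpn,R)$. Its $\pi_\ell$ sheaf is $H^{-\ell}(C_*\bb ZF^E(\gmpn,R)_{\nis})=H^{-\ell-n}(R^E(-n))$ for $n\le0$ by Definition~\ref{haapsalu}; for $n>0$ one uses the $\uhom(\gmpn,-)$ convention. The composite map is induced by $\lambda_n$, since $\bb S_R\to HR$ corresponds to linearization, and this gives the claimed isomorphism.
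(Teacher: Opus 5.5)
Your argument is correct and is essentially the paper's. Both run the Postnikov tower of Theorem~\ref{ovechkin} and use that its layers in degrees $1,\ldots,\ell$ vanish once the primes dividing $|\pi_1^s|,\ldots,|\pi_\ell^s|$ are inverted. The paper organizes this as an induction on $\ell$ with integral coefficients, killing the term $\pi_0(LM_E)\otimes\pi^s_\ell$ in its exact sequence at each step; you instead localize the coefficients to the Moore spectrum $\bb S_R$ up front, which is slightly cleaner.

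Your ``main obstacle'' is only an off-by-one slip. An $\ell$-connected fiber already has vanishing homotopy sheaves in degrees $\le\ell$. That gives injectivity and surjectivity on $\pi_\ell$ at once, so no sharpening of the bound is needed.
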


\begin{proof}
We use induction in $\ell$. If $\ell=1$ then $\pi_1^s=\bb Z/2\bb Z$ and 
   $$\pi_{n+1,n}^{\bb A^1}(E)[2^{-1}]\cong H^{-1-n}(\bb Z[2^{-1}]^E(-n))$$ 
by Corollary~\ref{sandin}. For any $q\leq\ell-1$ Theorem~\ref{ovechkin} implies an isomorphisms of presheaves
   $$\pi_{<q}(M_E(Y,\bb S^q))= 0\quad{\textrm{and}}\quad\pi_q(M_E(Y,\bb S^q))=\pi_q(LM_E(Y))\otimes\pi_q^s.$$
The tower~\eqref{shtower} now yields an exact sequence of Nisnevich sheaves
   \begin{multline*}
    (\pi^{\nis}_0(LM_E(\gmp^{\w\ell}))\otimes\pi_\ell^s)[p_1^{-1},\ldots,p_{t(\ell-1)}^{-1}]\to\\
       \to\pi_{n+\ell,n}^{\bb A^1}(E)[p_1^{-1},\ldots,p_{t(\ell-1)}^{-1}]\to H^{-\ell-n}(\bb Z[p_1^{-1},\ldots,p_{t(\ell-1)}^{-1}]^E(-n))\to 0.
    \end{multline*}
Our result now follows.
\end{proof}

We finish the section with discussing the motivic Adams spectral sequence (MASS) for framed motives.

\begin{thm}[MASS for framed motives]\label{massthom}
Suppose $N$ is an $S^1$-spectrum and $E$ is a motivic Thom $T$-spectrum. 
Let $X,U\in\Sm_k$, $Z$ be closed in $X$ and $Y=X/X-Z$. Suppose that $N$ has an Adams resolution of the form~\eqref{adams}.

$(1)$ The Adams resolution~\eqref{adams} yields a tower in $SH$ of the form
   \begin{equation}\label{masstower} 
    \xymatrix{\cdots\ar[r]&M_E(Y,N^2)(U)\ar[d]\ar[r]&M_E(Y,N^1)(U)\ar[d]\ar[r]&M_E(Y,N)(U)\ar[d]\\
                                     &C_*(\bb ZF^E(U,Y)\otimes L^2)&C_*(\bb ZF^E(U,Y)\otimes L^1)&C_*(\bb ZF^E(U,Y)\otimes L^0)}
   \end{equation}

$(2)$ If $k$ is perfect,
the tower~\eqref{masstower} yields a tower of $\bb A^1$-local motivic $S^1$-spectra in $SH_{S^1}(k)$
   \begin{equation}\label{shmasstower} 
    \xymatrix{\cdots\ar[r]&M_E(Y,N^2)\ar[d]\ar[r]&M_E(Y,N^1)\ar[d]\ar[r]&M_E(Y,N)\ar[d]\\
                                     &C_*(\bb ZF^E(Y)\otimes L^2)_{\nis}&C_*(\bb ZF^E(Y)\otimes L^1)_{\nis}&C_*(\bb ZF^E(Y)\otimes L^0)_{\nis}}
   \end{equation}

$(3)$ Suppose that for each $i\geq 0$ there is $n\geq 0$ such that $N^q$ is $i$-connected for $q\geq n$.
Then the tower~\eqref{shmasstower} yields a strongly convergent spectral sequence
   $$E^2_{p,q}=\pi_{p+q}(C_*(\bb ZF^E(Y)\otimes L^q)^f)(U)\Longrightarrow\pi_{p+q}(M_E(Y,N)^f(U)),$$
where $C_*(\bb ZF^E(Y)\otimes L^q)^f$ and $M_E(Y,N)^f$ are stable local fibrant replacements of 
$C_*(\bb ZF^E(Y)\otimes L^q)$ and $M_E(Y,N)$. In particular, it induces  a strongly convergent spectral sequence
of Nisnevich sheaves
   $$E^2_{p,q}=\pi_{p+q}^{\nis}(C_*(\bb ZF^E(Y)\otimes L^q)_{\nis})\Longrightarrow\pi_{p+q}^{\nis}(M_E(Y,N)).$$
\end{thm}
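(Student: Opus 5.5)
The plan is to run the proof of Theorem~\ref{ovechkin} again, with the Postnikov tower replaced by the given Adams resolution~\eqref{adams}. Theorem~\ref{oshie} is stated for arbitrary Adams resolutions, so it already supplies the level-wise input; the only genuinely new point is convergence in part (3).

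For (1), first take $E=S_T$. Because $Y\wedge E$ is again a motivic Thom spectrum, we may reduce to $Y=\pt$ exactly as in Theorem~\ref{ovechkin}. Fix $U$ and a simplicial degree $m$ of $\Delta^\bullet_k$. Theorem~\ref{oshie}, applied to $X/X'=U\times\Delta^m_k\times\PP^{\w j}$, produces an Adams resolution of $\Fr(U\times\Delta^m_k\times\PP^{\w j},\ldots\otimes N)$ whose layers are $\bb ZF(\ldots)\otimes L^q$. By Corollary~\ref{roy} the functor is triangulated, so each degreewise sequence $M^{q+1}\to M^q\to L^q$ is sent to a homotopy fibre sequence. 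Lemma~\ref{lapierre}(3) then says that geometric realization over $\Delta^\op$ preserves these fibre sequences. Lemma~\ref{lapierre}(1) identifies the realized layer with $C_*(\bb ZF(U,-)\otimes L^q)$, since $L^q$ is levelwise simplicial abelian.

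To pass to a general Thom spectrum $E$, write $M_E(\pt,N)=\colim_j\uhom(\PP^{\w j},M_{fr}(E_j,N))$ using \cite[Corollary~9.3, Lemma~9.4]{GN}. The same colimit description holds for the linear versions, $\bb ZF^E=\colim_j\bb ZF(-\times\PP^{\w j},E_j)$. Filtered colimits preserve stable equivalences and fibre sequences, so the tower~\eqref{masstower} follows for $E$.

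For (2), apply the argument of Theorem~\ref{ovechkin}(2) to every term. The presheaves $\bb ZF^E(Y)\otimes L^q$ are $\bb A^1$-invariant quasi-stable framed presheaves after applying $C_*$, so their homotopy sheaves are strictly $\bb A^1$-invariant by the result of~\cite{GP4} quoted in the notation section. The same holds for each $M_E(Y,N^q)$ via the colimit description. Morel's \cite[Theorem~6.2.7]{Mor} then shows that local fibrant replacements are motivically fibrant, so every term of~\eqref{shmasstower} is $\bb A^1$-local. Sheafifying the tower from (1) gives~\eqref{shmasstower}.

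For (3), the main obstacle is convergence, because $N$ is no longer connective and the layers $L^q$ need not be shifted Eilenberg--MacLane spectra. Given $i$, choose $n$ with $N^q$ $i$-connected for $q\geq n$. Corollary~\ref{roy} and Corollary~\ref{strome} represent $\Fr^\tau(\ldots\otimes N^q)$ as a wedge of copies of $N^q$, so it is also $i$-connected. Lemma~\ref{lapierre}(2), applied to the simplicial direction, together with the colimit over $j$, then shows that $M_E(Y,N^q)$ is locally $i$-connected. By \cite[Lemma~4.3.1]{Mor}, on $U$ of dimension $d$ the section spectrum $M_E(Y,N^q)^f(U)$ is $(i-d)$-connected. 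Hence for each fixed total degree the tower becomes eventually constant, that is, $\operatorname{holim}_q$ vanishes with no $\lim^1$ contribution. With this in hand, \cite[Corollary~6.1.1]{FS}, or the standard exact-couple argument, gives the strongly convergent spectral sequence with the stated $E^2$-term. Finally, sheafifying in $U$, which is exact, yields the spectral sequence of Nisnevich sheaves.
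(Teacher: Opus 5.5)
Your proposal is correct and is essentially the paper's proof, which says only that the argument literally repeats that of Theorem~\ref{ovechkin}. That argument uses Theorem~\ref{oshie}, Lemma~\ref{lapierre} and Corollary~\ref{roy} for the tower, the colimit over the layers $L_jE$ for general $E$, strict $\bb A^1$-invariance with Morel's theorems for $\bb A^1$-locality, and \cite[Corollary~6.1.1]{FS} for convergence. Your one explicit addition, in part (3), is to replace the $(q-1)$-connectivity of the Postnikov stages by the hypothesis that the connectivity of $N^q$ tends to infinity, giving $\pi_t(M_E(Y,N^q)^f(U))=0$ for $q\gg 0$ in each fixed degree $t$, which is exactly the modification the repeated argument needs.
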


\begin{proof}
The proof literally repeats that of Theorem~\ref{ovechkin}.
\end{proof}

\begin{rem}\label{remmass}
Theorem~\ref{massthom} applies to a broad class of Adams-type resolutions and therefore provides a unified 
mechanism for constructing motivic spectral sequences from framed motives.
One example is the Adams resolution discussed in Example~\ref{adamsresex}. Another 
important instance is the classical ($\modd p$) Adams resolution of the sphere spectrum $\bb S$.
In this case, there is a spectral sequence (\`a la Thereom~\ref{massthom}) that relates
the $p$-completed bigraded homotopy sheaves
$\pi_{*,*}^{\bb A^1}(E)\otimes\bb Z_p$ to
$E$-framed motivic cohomology with coefficients in the groups
$\operatorname{Ext}_{\mathcal{A}_p}^{*,*}(\mathbb{Z}/p, \mathbb{Z}/p)$.

More generally, the same construction applies to $F$-Adams resolutions, where $F$ is 
a ring spectrum such as $MU$ or $BP$, yielding motivic analogues of Adams--Novikov spectral sequences. 
Thus Theorem~\ref{massthom} may be viewed as a general framework that associates to a suitable 
Adams-type resolution a corresponding spectral sequence for $E$-framed motives. 
We leave the details of these constructions to the interested reader.
\end{rem}

\section{Computing framed motivic cohomology}\label{nikishin}


In the previous section, we constructed the motivic Atiyah--Hirzebruch spectral sequence and
established its relationship with framed motivic cohomology. The effectiveness of this spectral
sequence depends on our ability to understand and compute the corresponding cohomology groups.
The purpose of this section is to develop additional tools for such computations by exploiting the
symmetric-group actions that are intrinsic to the theory of framed correspondences.
The results obtained here also
form the key technical ingredient in the reconstruction theorem proved in the following section.


\subsection{The tame $M$-module structure and permutation free correspondences}

\begin{dfn}
We follow~\cite[Definition~5.1]{GG23}.
Suppose $\sigma:\bb P^{\w 1}\to T$ is the canonical morphism of pointed Nisnevich sheaves given by $(\{0\},\bb A^1,t)\in\Fr_1(\pt,\pt)$.
Let $E$ be a symmetric motivic Thom $T$-spectrum in
$Sp^\Sigma(\cc M,T)$. Given $X,Y\in\Sm_k$ and an open subset $V\subset Y$, define the symmetric $S^1$-spectrum 
$\Fr^{E,\Sigma}_*(X,Y/V)$ as follows.
First, let
   $$\Fr^{E,\Sigma}_n(X,Y/V):=\Hom_{\cc M}(X\wedge \bb P^{\wedge n},Y/V\wedge E_n\wedge S^n).$$
This simplicial set is pointed at the zeroth map. 
By definition, $\Fr^{E,\Sigma}_0(X,Y/V):=\Hom_{\cc M}(X,Y/V\wedge E_0)$.
Each $\Fr^{E,\Sigma}_n(X,Y/V)$ is a $\Sigma_n$-simplicial set. 
The left action of $\Sigma_n$ on
$\Fr^{E,\sigma}_n(X,Y/V)$ is given by conjugation: for each $f:X\wedge \bb P^{\wedge n}\to Y/V\wedge E_n\wedge S^n$
and each $\pi\in\Sigma_n$ the morphism $\pi\cdot f$ is defined as
the composition
   \begin{equation}\label{sigmaact}
    X\wedge\bb P^{\wedge n}\xrightarrow{X\wedge\pi^{-1}}X\wedge\bb P^{\wedge n}\xrightarrow{f}Y/V\wedge E_n\wedge S^n
       \xrightarrow{Y/V\wedge\pi\wedge\pi}Y/V\wedge E_n\wedge S^n.
   \end{equation}

Second, the morphism $\sigma$ induces natural $(\Sigma_n\times\Sigma_k)$-equivariant maps 
   $$\Fr^{E,\Sigma}_n(X,Y/V)\wedge S^k\to\Fr^{E,\Sigma}_{n+k}(X,Y/V),$$
so that 
   $$\Fr^{E,\Sigma}_*(X,Y/V):=(\Fr^{E,\Sigma}_0(X,Y/V),\Fr^{E,\Sigma}_1(X,Y/V),\Fr^{E,\Sigma}_2(X,Y/V),\ldots)$$
becomes a symmetric $S^1$-spectrum.
\end{dfn}

\begin{lem}\label{sup}
$\pi_k(\Fr^{E,\Sigma}_*(X,Y/V))=\bb ZF^E(X,Y/V)\otimes\pi_k^s$ for any $k\in\bb Z$.
\end{lem}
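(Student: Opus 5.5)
The plan is to compare the symmetric $S^1$-spectrum $\Fr^{E,\Sigma}_*(X,Y/V)$ with the spectra of Section~\ref{hutson}, that is, with the value of the $\Gamma$-space $\Gamma\Fr^E(X,Y/V)$ at the sphere spectrum $\bb S=(S^0,S^1,\ldots)$, and then apply Theorem~\ref{carbery}. Stable homotopy groups of a symmetric spectrum are computed here naively, as $\colim_n\pi_{k+n}$ of the underlying $S^1$-spectrum. This is legitimate because the underlying spectrum will turn out to be levelwise equivalent to one whose homotopy is already "semistable", as the computation below shows. So the $\Sigma_n$-actions \eqref{sigmaact} can be ignored for this computation.

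\emph{Step 1: identify the levels.} Since $S^n$ is a simplicial set, I would show levelwise that
\[
\Hom_{\cc M}(X\wedge\bb P^{\wedge n},Y/V\wedge E_n\wedge S^n)\cong \Fr^E_n(X,Y/V\otimes S^n).
\]
Writing $E_n=\colim_i V_{n,i}/(V_{n,i}-Z_{n,i})$ and using that $X\wedge\bb P^{\wedge n}$ is compact (a quotient of smooth schemes), both sides commute with the colimit in $i$. So it suffices to treat a single Thom space $W/(W-Z)$. In simplicial degree $m$, the target $Y/V\wedge W/(W-Z)\wedge (S^n)_m$ is the Thom space of $\sqcup_{(S^n)_m\setminus *}$ copies of $Y\times W$. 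Voevodsky's Lemma~\ref{Voevlemma} then identifies these morphisms with the value of the $\Gamma$-space $\Gamma\Fr_n^E(X,Y/V)$ at the finite pointed set $(S^n)_m$. Hence $\Fr^{E,\Sigma}_n(X,Y/V)$ is the $n$-th space of the $\Gamma$-space evaluated at $\bb S$ before stabilisation in the framing direction. The structure maps $\Fr_n\wedge S^1\to\Fr_{n+1}$ combine the $\Gamma$-space assembly map with $-\wedge\sigma$.

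\emph{Step 2: pass to the colimit.} Consider the filtered system over $n$ (level) and $j$ (framing). Using $-\wedge\sigma$ and the $E$-structure maps, the colimit $\colim_n\pi_{k+n}\Fr^{E,\Sigma}_n$ equals $\colim_n\colim_j \pi_{k+j}(\Gamma\Fr^E_n(X,Y/V)(\bb S))$. Theorem~\ref{protas} (via Theorem~\ref{carbery} and Corollary~\ref{strome}) says that for fixed level $n$, the spectrum $\Gamma\Fr^E_n(X,Y/V)(\bb S)$ is stably a wedge of spheres indexed by $F^E_n(X,Y/V)\setminus 0$, i.e.\ by framed correspondences with connected support. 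Its $\pi_k$ is therefore $\bb ZF^E_n(X,Y/V)\otimes\pi_k^s$. Here one needs the connected-support decomposition of Lemma~\ref{dorofeev} to hold for $E$-framed correspondences. This holds exactly as for $\tau$-framed ones, since $\sigma$ is geometrically connected and the maps $-\wedge\sigma$ preserve connected supports. Taking the colimit in $n$ then gives $\bb ZF^E(X,Y/V)\otimes\pi_k^s$, because filtered colimits commute with homotopy groups and with $\otimes\pi_k^s$.

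\emph{Main obstacle.} The delicate point is Step 2: justifying the interchange of the diagonal colimit with the double colimit. This amounts to checking that the structure maps of $\Fr^{E,\Sigma}_*$ are compatible with the identifications of Step 1, in particular that the suspension coordinate and the framing coordinate enter symmetrically up to the $\Sigma_n$-twist, which does not affect $\pi_*$ after colimits. I would first try a cofinality argument on the bi-indexed system $(n,j)$ with $j\le n$. Failing that, I would appeal to the fact that the permutation twists act on spheres $S^n$ by maps of degree $\pm1$, so they only change signs that cancel in the colimit.
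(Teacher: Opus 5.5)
This is essentially the paper's argument. The paper imports the comparison of the diagonal spectrum $\Fr^{E,\Sigma}_*(X,Y/V)$ with $\Fr^E(X,Y/V\w\bb S)$ from \cite[Lemma~6.10]{GG23} and then applies Theorem~\ref{carbery}. Your cofinality argument on the $(n,j)$-indexed system is a valid hands-on replacement for that citation: framing stabilization is a map of $\Gamma$-spaces, so it commutes with the assembly maps and the diagonal is cofinal.

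Drop the remark about semistability, though. In the lemma, $\pi_k$ is by definition the naive homotopy group, so no justification is needed. Moreover, the spectrum is in general \emph{not} semistable: by Lemma~\ref{naivevstrue} and Corollary~\ref{naivevstruecor}, its true group $\wt\pi_0$ is $\bb Z\otimes_M\bb ZF^E(X,Y/V)$. This in general differs from $\bb ZF^E(X,Y/V)$, because $M$ acts nontrivially by permuting framing coordinates.
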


\begin{proof}
The proof of~\cite[Lemma~6.10]{GG23} shows that this spectrum is stably equivalent to 
   $$\Fr^{E}(X,Y/V\wedge\bb S):=(\Fr^{E}(X,Y/V),\Fr^{E}(X,Y/V\wedge S^1),\Fr^{E}(X,Y/V\w S^2),\ldots).$$
Our proof now follows from Theorem~\ref{carbery}.
\end{proof}

Likewise, for any Abelian group $A$ denote by 
   $$\bb ZF^{E,\Sigma}_*(X,Y/V)\otimes A:=(\bb ZF^{E}_0(X,Y/V)\otimes A,\bb ZF^{E}_1(X,Y/V\otimes S^1)\otimes A,\bb ZF^{E}_2(X,Y/V\otimes S^2)\otimes A,\ldots),$$
where each $\bb ZF^{E}_n(X,Y/V\otimes S^n)\otimes A=\bb ZF^{E}_n(X,Y/V)\otimes A\otimes\bb Z(S^n)$. It is a symmetric $S^1$-spectrum with
symmetric groups action given by~\eqref{sigmaact}. It is stably equivalent to
   $$\bb ZF^{E}(X,Y/V\otimes\bb S)\otimes A:=(\bb ZF^{E}(X,Y/V)\otimes A,\bb ZF^{E}(X,Y/V\otimes S^1)\otimes A,\bb ZF^{E}(X,Y/V\otimes S^2)\otimes A,\ldots).$$
Note that this spectrum is the Eilenberg--MacLane spectrum of $\bb ZF^{E}(X,Y/V)\otimes A$.

By~\cite{Sch08} the Abelian group $\bb ZF^E(X,Y/V)\otimes A=\pi_0(\Fr^{E,\Sigma}_*(X,Y/V)\otimes A)=\pi_0(\bb ZF^{E,\Sigma}_*(X,Y/V)\otimes A)$ 
enjoys the structure of a tame $M$-module, where $M$
is the monoid of injective self-maps of the set $\omega=\{1,2,\ldots\}$ of positive natural numbers.
We recall this structure for the convenience of the reader (see~\cite{Sch08} for details).

Consider a category $I$ consisting of objects $\mathbf n=\{1,\dots,n\}$
for every non-negative integer $n$, including $\mathbf{0}=\emptyset$.
Its morphisms in $I$ are all injective maps.
An {\em $I$-functor} is a covariant functor from the category $I$
to the category of Abelian groups.

Let $X\in Sp_{S^1}^\Sigma$ be a symmetric $S^1$-spectrum and let
$k\in\bb Z$. We assign an $I$-functor $\underline{\pi}_kX$ 
to $X$. On objects, this $I$-functor is given by
   $$ (\underline{\pi}_kX)(\mathbf n) \ = \ \pi_{k+n}X_n $$
if $k+n\geq 2$ and $(\underline{\pi}_kX)(\mathbf n)=0$ for $k+n<2$.
If $\alpha:\mathbf n\to \mathbf m$ is an injective map and $k+n\geq 2$,
then $\alpha_*: (\underline{\pi}_kX)(\mathbf n)\to (\underline{\pi}_kX)(\mathbf m)$
is given as follows. We choose a permutation $\gamma\in\Sigma_m$
such that $\gamma(i)=\alpha(i)$ for all $i=1,\dots,n$ and set
   \begin{equation}\label{mustamae}
    \alpha_*(x)\ =sign(\gamma) \cdot \gamma_*(\iota_*^{m-n}(x))
   \end{equation}
where $\iota_*:\pi_{k+n}X_n\to\pi_{k+n+1}X_{n+1}$ is the 
stabilization map
   $$\pi_{k+n} \, X_n \ \xrightarrow{\ -\w S^1\ } \
       \pi_{k+n+1} \, \left( X_n\w S^1 \right) \ \xrightarrow{\ (\sigma_n)_*\ } \pi_{k+n+1} \, X_{n+1}.$$
The definition of $\alpha_*(x)$ is independent of the choice of $\gamma$.

\begin{lem}\label{zanyatno}
We have $(\underline{\pi}_0\,\bb ZF^{E,\Sigma}_*(X,Y/V)\otimes A)(\mathbf n)=\bb ZF^{E}_n(X,Y/V)\otimes A$ and
$\alpha_*(x)=\gamma_*(u_*^{m-n}(x))$ for any $x\in\bb Z^E_n(X,Y/V))\otimes A$, where $u_*$
 is determined by the composite map
    $${\cc M}(X\wedge \bb P^{\wedge n},Y/V\wedge E_n)\xrightarrow{-\wedge\sigma}
        {\cc M}(X\wedge \bb P^{\wedge n+1},Y/V\wedge E_n\wedge T)\xrightarrow{u_n}
        {\cc M}(X\wedge \bb P^{\wedge n+1},Y/V\wedge E_{n+1})$$
and $\Sigma_m$ acts on $\bb ZF^{E}_m(X,Y/V)\otimes A$ by conjugation as in~\eqref{sigmaact}.
\end{lem}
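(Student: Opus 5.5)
The plan is to compute both sides of the statement directly from the definition of $\underline{\pi}_0$ and of the spectrum $\bb ZF^{E,\Sigma}_*(X,Y/V)\otimes A$.

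\emph{Step 1: the value on objects.} By definition $(\underline{\pi}_0 X)(\mathbf n)=\pi_n X_n$ for $n\geq 2$. Here
\[
X_n=\bb ZF^E_n(X,Y/V)\otimes A\otimes\bb Z(S^n),
\]
which is a simplicial Abelian group equal to the $n$-fold suspension of the discrete group $\bb ZF^E_n(X,Y/V)\otimes A$. By Dold--Kan, $\pi_n$ of $\bb Z(S^n)\otimes B$ is canonically $B$. This gives the identification $\pi_n X_n=\bb ZF^E_n(X,Y/V)\otimes A$. For small $n$ (where $n<2$ and the $I$-functor is set to $0$) I will either note the convention, or use the fact that only large $n$ matter for the tame $M$-module.

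\emph{Step 2: the stabilization map.} Under the identification of Step 1, $\iota_*$ becomes the map $u_*$. The structure map $X_n\wedge S^1\to X_{n+1}$ is induced by $-\wedge\sigma$ followed by $u_n\colon E_n\wedge T\to E_{n+1}$, tensored with the canonical iso $\bb Z(S^n)\otimes\bb Z(S^1)\cong\bb Z(S^{n+1})$. On $\pi_{n+1}$ the suspension iso $\pi_n(\bb Z(S^n)\otimes B)\cong\pi_{n+1}(\bb Z(S^{n+1})\otimes B)$ is the identity on $B$ under Dold--Kan, provided $S^1$ is placed last in the smash product. This matches the convention for $\sigma_n$.

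\emph{Step 3: the symmetric group action, which is the main obstacle.} By \eqref{sigmaact}, $\gamma\in\Sigma_m$ acts on $X_m$ by conjugation on the framed part tensored with the permutation action of $\gamma$ on $S^m=(S^1)^{\wedge m}$. On $\pi_m(\bb Z(S^m))\cong\bb Z$, a coordinate permutation of $S^m$ acts by $\mathrm{sign}(\gamma)$, since each transposition of two $S^1$-factors has degree $-1$. Hence
\[
\gamma_*^{X}=\mathrm{sign}(\gamma)\cdot\gamma_*^{\mathrm{conj}},
\]
where $\gamma_*^{\mathrm{conj}}$ is the conjugation action on $\bb ZF^E_m(X,Y/V)\otimes A$.

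The care needed here is to separate the two roles of $\pi$ in $Y/V\wedge\pi\wedge\pi$. One copy acts on $E_m$ and belongs to the conjugation action on framed correspondences. The other acts on $S^m$ and produces the sign. I will fix the decomposition $\bb ZF^E_m\otimes A\otimes\bb Z(S^m)$ so that this splitting is explicit.

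\emph{Step 4: assembling the formula.} Substituting Steps 2 and 3 into \eqref{mustamae} gives
\[
\alpha_*(x)=\mathrm{sign}(\gamma)\cdot\mathrm{sign}(\gamma)\,\gamma_*^{\mathrm{conj}}\bigl(u_*^{m-n}(x)\bigr)=\gamma_*\bigl(u_*^{m-n}(x)\bigr),
\]
because the two signs cancel. This is exactly the claimed formula. Independence of the choice of $\gamma$ is inherited from the general statement about \eqref{mustamae}.
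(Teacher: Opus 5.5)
Your proposal is correct and follows essentially the same route as the paper. The paper identifies $\pi_n$ of the $n$-th level with $\bb ZF^E_n(X,Y/V)\otimes A$, splits the $\Sigma_m$-action on $\bb ZF^E_m(X,Y/V)\otimes A\otimes\bb Z(S^m)$ as $(\gamma_*\otimes\id)\circ(\id\otimes\gamma_*)$ with $\pi_m(\id\otimes\gamma_*)=\mathrm{sign}(\gamma)$, and then cancels the two signs in \eqref{mustamae}. You also make explicit two points the paper leaves implicit: that the Dold--Kan identification carries $\iota_*$ to $u_*$, and that $\underline{\pi}_0$ is defined to be $0$ for $n<2$, so the first identification literally holds only for $n\geq 2$, which is harmless for the colimit.
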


\begin{proof}
By definition, 
   \begin{multline*}
    (\underline{\pi}_0\,\bb ZF^{E,\Sigma}_*(X,Y/V)\otimes A)(\mathbf n)=
    {\pi}_n(\,\bb ZF^{E}_n(X,Y/V)\otimes A\otimes\bb Z(S^n))=\\
       =\bb ZF^{E}_n(X,Y/V)\otimes A\otimes\pi_n(\bb Z(S^n))=\bb ZF^{E}_n(X,Y/V)\otimes A.
   \end{multline*}
 The action of $\gamma\in\Sigma_m$ on the simplicial Abelian group 
 $\bb ZF^{E}_m(X,Y/V)\otimes A\otimes\bb Z(S^m)$
equals $\gamma_*\otimes\gamma_*=(\gamma_*\otimes\id)\circ(\id\otimes\gamma_*)$, where
$\gamma_*\otimes\id$ (respectively $\id\otimes\gamma_*$) acts on the $\bb ZF^{E}_m(X,Y/V)\otimes A$-factor
(respectively on the $\bb Z(S^m)$-factor). But
   $$\pi_m(\id\otimes\gamma_*)=sign(\gamma)\cdot(\id\otimes\id)$$
due to commutativity of the diagram for any $t\in\pi_m(\bb Z[S^m])$
   $$\xymatrix{S^m\ar[r]^{t}\ar[d]_\gamma&\bb Z[S^m]\ar[d]^\gamma\\
                       S^m\ar[r]^{t}&\bb Z[S^m]}$$
and the effect on homotopy groups of precomposing with a 
coordinate permutation of the sphere is
multiplication by the sign of the permutation.

Now the general formula~\eqref{mustamae} takes the form
   $$\alpha_*(x)=(sign(\gamma))^2(\gamma_*\otimes\id)(u_*^{m-n}(x))=(\gamma_*\otimes\id)(u_*^{m-n}(x)).$$
Our lemma now follows.
\end{proof}

For any $I$-functor $F$ the colimit
of $F$, formed over the subcategory $\bb N$ of inclusions, 
has a natural left action by the monoid $M$ of injective endomorphisms
of the set $\omega$ of natural numbers. Applied to the $I$-functor
$\underline{\pi}_kX$ coming from a symmetric spectrum $X$, this yields
the $M$-action on the stable homotopy group $\pi_kX$.

In more detail (see~\cite[p.~1318]{Sch08}), set $F(\omega)=\colim_\bb N F$. First define
$\beta_*:F(\mathbf n)\to F(\omega)$ for every injection 
$\beta:\mathbf n\to\omega$
as follows. Let $m=\max\{\beta(\mathbf n)\}$ and let
$\beta|_{\mathbf n}:\mathbf n\to \mathbf m$ be
the restriction of $\beta$. Take $\beta_*(x)$ 
to be the class in the colimit 
represented by the image of $x$  under 
$$ (\beta|_{\mathbf n})_*\ :\  F(\mathbf n)\ \to \ F(\mathbf m). $$
This is a functorial extension of
$F$, i.e., for every map $\alpha:\mathbf k\to\mathbf n$ in $I$ one has
$(\beta\alpha)_*(x)=\beta_*(\alpha_*(x))$.

Next, let $f:\omega\to\omega$ be an injective endomorphism of $\omega$.
We want to define $f_*:F(\omega)\to F(\omega)$.
If $[x]\in F(\omega)$ is an element in the colimit represented 
by $x\in F(\mathbf n)$, one sets
$f_*[x]=[(f|_{\mathbf n})_*(x)]$, where $f|_{\mathbf n}:\mathbf n\to \omega$
is the restriction of $f$ and $f_*:F(\mathbf n)\to F(\omega)$
was defined above. This does not depend on the representative $x$
of the class $[x]$ in the colimit. Moreover, the extension is functorial in the sense that
$(f\alpha)_*(x)=f_*(\alpha_*(x))$ for injections 
$\alpha:\mathbf n\to \omega$ as well as
$(fg)_*[x]=f_*(g_*[x])$ when $g$ is another injective endomorphism of $\omega$.
Note that if $\iota:\mathbf n\to\omega$ is the inclusion,
then $\iota_*(x)=[x]$ for $x\in F(\mathbf n)$.

It is worth mentioning that the action of the monoid $M$ on the colimit of
any $I$-functor $F$ has a special property: 
every element in the colimit $F(\omega)$ is represented by a class
$x\in F(\mathbf n)$ for some $n\geq 0$; 
then for every element $f\in M$ which fixes 
the numbers $1,\dots,n$, we have $f_*[x]=[x]$.
If $F$ is an $I$-functor of Abelian groups, then such $M$-modules are referred to as {\it tame\/} in~\cite{Sch08}.

\begin{cor}\label{M-module}
$\bb ZF^E(Y/V,A)=\bb ZF^E(-,Y/V)\otimes A$ enjoys the structure of a presheaf of tame $M$-modules.
\end{cor}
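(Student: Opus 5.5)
The plan is to apply Schwede's construction objectwise and then check naturality in the first variable. Fix $U\in\Sm_k$. By Lemma~\ref{zanyatno}, the $I$-functor $F_U:=\underline{\pi}_0\,(\bb ZF^{E,\Sigma}_*(U,Y/V)\otimes A)$ has $F_U(\mathbf n)=\bb ZF^E_n(U,Y/V)\otimes A$. For an injection $\alpha:\mathbf n\to\mathbf m$ it acts by $\alpha_*(x)=\gamma_*(u_*^{m-n}(x))$, where $u_*$ is the stabilization $-\wedge\sigma$ followed by the structure map of $E$, and $\gamma$ acts by conjugation as in~\eqref{sigmaact}.

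First I would identify the colimit of $F_U$ over the subcategory $\bb N$ of inclusions. Restricted to $\bb N$, the transition maps of $F_U$ are exactly the maps $u_*$, since for the standard inclusion we may take $\gamma=\id$. These are the maps defining $\bb ZF^E(U,Y/V)=\colim_n\bb ZF^E_n(U,Y/V)$. Tensoring with $A$ commutes with filtered colimits, so $F_U(\omega)=\bb ZF^E(U,Y/V)\otimes A$. Equivalently, one can use Lemma~\ref{sup} together with Schwede's identification of $\pi_0$ of a symmetric spectrum with $\colim_{\bb N}\underline{\pi}_0$. In either form, the Schwede construction described above gives a tame $M$-module structure on $\bb ZF^E(U,Y/V)\otimes A$. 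Tameness is automatic for colimits of $I$-functors, by the remark preceding the corollary.

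Second, I would show that for a morphism $g:U'\to U$ in $\Sm_k$ the restriction map $g^*:\bb ZF^E(U,Y/V)\otimes A\to\bb ZF^E(U',Y/V)\otimes A$ is $M$-equivariant. It suffices to show that $g^*$ gives a natural transformation of $I$-functors $F_U\to F_{U'}$, because the colimit over $\bb N$ and the formulas for $\beta_*$ and $f_*$ are functorial in the $I$-functor. Naturality has two parts. The map $g^*$ is precomposition with $g\wedge\bb P^{\wedge n}$. This commutes with $-\wedge\sigma$ and with postcomposition by $u_n$, since those act on the target and on the $\bb P$-factors only. It also commutes with the conjugation action~\eqref{sigmaact}, because $\pi^{-1}$ permutes only the $\bb P^{\wedge n}$-factors while $g$ acts on the $U$-factor. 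One also checks that $g^*$ preserves the subsets of correspondences with connected support used in defining $\bb ZF^E$, or more precisely is compatible with the linearization. I would handle this as in the definition of presheaf structure on $\bb ZF^E$: for $g^*$ to make sense on linear correspondences it must already be defined there, and conjugation by $\pi$ does not change supports.

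The main obstacle is the bookkeeping in the second step, specifically the connected-support issue. Pulling back a correspondence with connected support along $g$ may produce a support with several components, so $g^*$ on $\bb ZF^E_n$ is given by summing over components. I would verify equivariance at the level of the symmetric spectra $\Fr^{E,\Sigma}_*(U,Y/V)$, where $g^*$ is simply precomposition and is plainly a map of symmetric spectra. Then I would use Lemma~\ref{sup} and the identification $\pi_0=\bb ZF^E\otimes A$ via Theorem~\ref{carbery}. Schwede's $M$-action on $\pi_0$ is natural for maps of symmetric spectra, so the presheaf $U\mapsto\bb ZF^E(U,Y/V)\otimes A$ is a presheaf of tame $M$-modules.
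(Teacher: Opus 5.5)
Your proposal is correct and follows essentially the paper's route: the corollary is read off from Lemma~\ref{zanyatno} together with Schwede's tame $M$-action on $\colim_{\bb N}\underline{\pi}_0$, which is natural in maps of symmetric spectra and therefore gives a presheaf structure in $U$. Your extra care about connected supports is the naturality check the paper leaves implicit, and your direct $I$-functor argument already settles it, since both conjugation and $-\wedge\sigma$ commute with pulling back along $U'\to U$ and splitting the support into connected components.
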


Recall that stable equivalences of symmetric spectra are not detected by stable homotopy groups defined naively.
Instead, one defines ``true" stable homotopy groups for a symmetric spectrum $X\in Sp^\Sigma_{S^1}$, denoted
by $\wt\pi_*(X)$. There is a strongly convergent half-plane spectral sequence 
   \begin{equation} \label{schwede} 
    E^2_{p,q} = \Tor_p^{\bb Z[M]}(\bb Z,\pi_q X) \ \Longrightarrow \wt\pi_{p+q}(X). 
    \end{equation}
relating ordinary stable homotopy groups to true ones. The spectral sequence is natural in~$X$ with 
$d^r$-differential of bidegree $(-r,r-1)$ --- see~\cite[Section~5]{Sch08} for details.

Combining this spectral sequence with Lemma~\ref{sup} and Corollary~\ref{M-module}, one gets the following statement.

\begin{lem}\label{naivevstrue}
There is a strongly convergent half-plane spectral sequence 
   \begin{equation*} \label{schwede1} 
    E^2_{p,q} = \Tor_p^{\bb Z[M]}(\bb Z,\bb ZF^E(X,Y/V)\otimes\pi_q^s) \ \Longrightarrow \wt\pi_{p+q}(\Fr^{E,\Sigma}_*(X,Y/V)). 
    \end{equation*}
\end{lem}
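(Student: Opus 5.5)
The plan is to specialize Schwede's spectral sequence~\eqref{schwede} to the symmetric $S^1$-spectrum $X=\Fr^{E,\Sigma}_*(X,Y/V)$ and then identify its $E^2$-term using Lemma~\ref{sup} and Corollary~\ref{M-module}. The spectral sequence~\eqref{schwede} exists, is half-plane and strongly convergent, and is natural for an arbitrary symmetric spectrum of simplicial sets (see~\cite[Section~5]{Sch08}). So the only real content is the identification
$$\pi_q\Fr^{E,\Sigma}_*(X,Y/V)\cong\bb ZF^E(X,Y/V)\otimes\pi_q^s$$
as tame $M$-modules, not just as Abelian groups.

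First, Lemma~\ref{sup} identifies the underlying Abelian groups. Its proof goes through the stable equivalence with $\Fr^E(X,Y/V\wedge\bb S)$ and then through Theorem~\ref{carbery} and Corollary~\ref{strome}. These show that, after passing to the colimit over levels, the naive homotopy groups are $\bb ZF^E(X,Y/V)\otimes\pi_q^s$.

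Second, I have to compare the $M$-actions. The $M$-action on $\pi_q$ comes from the $I$-functor $\underline{\pi}_q$. The description in Lemma~\ref{zanyatno} (conjugation action~\eqref{sigmaact} on levels, stabilization by $-\wedge\sigma$ composed with $u_n$, and the sign from permuting sphere coordinates cancelling the sign in~\eqref{mustamae}) gives the $M$-module structure on $\bb ZF^E(X,Y/V)$ of Corollary~\ref{M-module}. For general $q$ I would argue levelwise. By Corollary~\ref{lindgren}, at level $n$ the space is stably $F^E_n(X,Y/V)\wedge S^n$ up to the $E_n$-twist. The permutation $\gamma\in\Sigma_n$ acts diagonally on the correspondence factor and on the sphere coordinates. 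On $\pi_{q+n}$ this action is $\gamma_*\otimes(\text{action of }\gamma\text{ on }\pi_{q+n}S^n)$, which in the stable range is $\gamma_*\otimes \mathrm{sign}(\gamma)$ acting on $\pi_q^s$. The sign then cancels against the sign in~\eqref{mustamae}, exactly as in the proof of Lemma~\ref{zanyatno}. Hence $\pi_q\Fr^{E,\Sigma}_*(X,Y/V)\cong \bb ZF^E(X,Y/V)\otimes\pi_q^s$ as $M$-modules, with $M$ acting trivially on $\pi_q^s$.

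Finally, substituting this into~\eqref{schwede} gives $E^2_{p,q}=\Tor_p^{\bb Z[M]}(\bb Z,\bb ZF^E(X,Y/V)\otimes\pi_q^s)$, converging strongly to $\wt\pi_{p+q}\Fr^{E,\Sigma}_*(X,Y/V)$.

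I expect the main obstacle to be the second step: checking that the stable equivalence used in Lemma~\ref{sup} is compatible with the $\Sigma_n$-actions, so that the isomorphism respects the $M$-module structure and not merely the underlying groups. If a direct argument is awkward, the fallback is to note that the isomorphism is natural in $q$ and is induced by a $\Sigma_n$-equivariant levelwise map. One can then verify compatibility on the generators given by single correspondences with connected support, using Lemma~\ref{dorofeev} to split the spectrum as a wedge indexed by such correspondences.
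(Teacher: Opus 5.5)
Your proposal is correct and follows the paper's route: apply Schwede's spectral sequence \eqref{schwede} to $\Fr^{E,\Sigma}_*(X,Y/V)$ and identify the $E^2$-term using Lemma~\ref{sup} and Corollary~\ref{M-module}. Your levelwise check that the sign of the sphere-coordinate permutation cancels the sign in \eqref{mustamae} identifies $\pi_q$ with $\bb ZF^E(X,Y/V)\otimes\pi_q^s$ as a tame $M$-module, with trivial action on $\pi_q^s$; this fills in a compatibility that the paper's one-line argument leaves implicit.
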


\begin{cor}\label{naivevstruecor}
There is an isomorphism of Abelian groups 
   $$\bb Z\otimes_M\bb ZF^E(X,Y/V)\cong\wt\pi_0(\Fr^{E,\Sigma}_*(X,Y/V)),$$ 
where $\bb Z$ is regarded as an $M$-module with trivial $M$-action.
\end{cor}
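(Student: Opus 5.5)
We read off the isomorphism from the spectral sequence of Lemma~\ref{naivevstrue} in total degree zero. Its $E^2$-page is
$$E^2_{p,q}=\Tor_p^{\bb Z[M]}(\bb Z,\bb ZF^E(X,Y/V)\otimes\pi_q^s),$$
and it converges strongly to $\wt\pi_{p+q}(\Fr^{E,\Sigma}_*(X,Y/V))$. Two vanishing statements do all the work.

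First, $\pi_q^s=0$ for $q<0$, so $E^2_{p,q}=0$ whenever $q<0$.

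Second, $\Tor_p^{\bb Z[M]}(\bb Z,-)$ vanishes for $p<0$. The spectral sequence is a half-plane one, and we must check that it is the half-plane $p\geq 0$. This holds because $\Tor_p$ is defined only for $p\geq 0$.

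Together these show that the only term on the line $p+q=0$ is $E^2_{0,0}$. We then check that no differential touches $E^r_{0,0}$. The differential $d^r$ has bidegree $(-r,r-1)$.
\begin{itemize}
\item Outgoing: $d^r\colon E^r_{0,0}\to E^r_{-r,r-1}$ lands in the region $p<0$, which is zero.
\item Incoming: $d^r\colon E^r_{r,1-r}\to E^r_{0,0}$ starts at $q=1-r<0$ for $r\geq 2$, which is also zero.
\end{itemize}
Hence $E^\infty_{0,0}=E^2_{0,0}$. Since the filtration of $\wt\pi_0$ has a single nonzero graded piece, strong convergence gives
$$\wt\pi_0(\Fr^{E,\Sigma}_*(X,Y/V))\cong E^2_{0,0}=\Tor_0^{\bb Z[M]}(\bb Z,\bb ZF^E(X,Y/V)\otimes\pi_0^s).$$

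Finally, $\pi_0^s=\bb Z$, and $\Tor_0^{\bb Z[M]}(\bb Z,N)=\bb Z\otimes_{\bb Z[M]}N$, which is what the statement writes as $\bb Z\otimes_M N$. This gives the claimed isomorphism.

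We must also confirm that the $M$-module structure on $\bb ZF^E(X,Y/V)$ used on the $E^2$-page is the tame one of Corollary~\ref{M-module}. This comes from the identification $\pi_0\Fr^{E,\Sigma}_*(X,Y/V)=\bb ZF^E(X,Y/V)$ of Lemma~\ref{sup}, together with Lemma~\ref{zanyatno}, which computes the action.

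The only delicate step is the half-plane convention: we need Schwede's spectral sequence~\eqref{schwede} to be supported in $p\geq 0$ with the stated differentials. We take this from~\cite[Section~5]{Sch08}. Alternatively, the edge homomorphism $\bb Z\otimes_M\pi_0X\to\wt\pi_0X$ is an isomorphism for any symmetric spectrum $X$ with $\pi_q X=0$ for $q<0$. Connectivity holds in our case by Lemma~\ref{sup}.
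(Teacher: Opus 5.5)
Your proposal is correct and follows the same route as the paper, which simply reads the isomorphism off the spectral sequence of Lemma~\ref{naivevstrue}. Your check spells out what the paper leaves implicit: since $\pi_q^s=0$ for $q<0$ and $\Tor_p=0$ for $p<0$, the term $E^2_{0,0}=\Tor_0^{\bb Z[M]}(\bb Z,\bb ZF^E(X,Y/V))=\bb Z\otimes_M\bb ZF^E(X,Y/V)$ is the only one in total degree zero, and no differential enters or leaves it.
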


\begin{proof}
This follows from Lemma~\ref{naivevstrue}.
\end{proof}

The next result introduces an additive category which will play an important role in our analysis.

\begin{thm}\label{catcor}
Let $E$ be a symmetric motivic Thom ring $T$-spectrum. There is an
additive category, denoted by $\wt{\bb Z}F^E(k)$, whose objects are those of $\Sm_k$ and
morphisms are given by $\bb Z\otimes_M\bb ZF^E(X,Y)$. Coproducts are given by disjoint unions of schemes.
Moreover, $\wt{\bb Z}F^E(k)$ is symmetric monoidal if $E$ is a commutative ring $T$-spectrum.
The monoidal product is given by the product of schemes. Furthermore, there is a canonical strict monoidal functor
between additive categories 
   $$\Phi:\bb ZF_*(k)\to\wt{\bb Z}F^E(k)$$
induced by the canonical morphism of symmetric ring $T$-spectra $S_T\to E$.
\end{thm}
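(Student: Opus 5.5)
The plan is to produce the category $\wt{\bb Z}F^E(k)$ from a category enriched in symmetric $S^1$-spectra and then apply $\wt\pi_0$. Corollary~\ref{naivevstruecor} identifies $\bb Z\otimes_M\bb ZF^E(X,Y)$ with $\wt\pi_0(\Fr^{E,\Sigma}_*(X,Y))$, so it is enough to build composition pairings on these symmetric spectra.

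\emph{Composition.} For $f\in\Fr^{E,\Sigma}_n(X,Y)$ and $g\in\Fr^{E,\Sigma}_m(Y,W)$, I define $g\circ f\in\Fr^{E,\Sigma}_{n+m}(X,W)$ as the composite
\[
X\w\bb P^{\w n}\w\bb P^{\w m}\xrightarrow{f\w\id}Y_+\w E_n\w S^n\w\bb P^{\w m}\xrightarrow{\mathrm{tw}}Y_+\w\bb P^{\w m}\w E_n\w S^n
\xrightarrow{g\w\id}W_+\w E_m\w S^m\w E_n\w S^n .
\]
I then rearrange the factors and apply the ring multiplication $E_n\w E_m\to E_{n+m}$ together with $S^n\w S^m=S^{n+m}$. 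I would check that this map is $\Sigma_n\times\Sigma_m$-equivariant, using the conjugation action~\eqref{sigmaact}, and that it is associative and unital up to the block permutations $\chi_{n,m}$. The unit is the element $\id_{X_+}\w 1\in\Fr^{E,\Sigma}_0(X,X)$.

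\emph{Enrichment and passage to $\wt\pi_0$.} Next I show the pairing is compatible with the structure maps given by $\sigma$, so that it defines a map of symmetric spectra
\[
\Fr^{E,\Sigma}_*(Y,W)\w\Fr^{E,\Sigma}_*(X,Y)\to\Fr^{E,\Sigma}_*(X,W).
\]
This is where the twist of $\sigma$ past $E_n$ and the symmetric-ring axioms for $E$ are used. Once this holds, true homotopy groups are lax monoidal with respect to the smash product, so we get a pairing $\wt\pi_0\otimes\wt\pi_0\to\wt\pi_0$. That gives a category with morphism groups $\bb Z\otimes_M\bb ZF^E(X,Y)$.

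\emph{The main obstacle.} The hard step is avoiding a full symmetric-spectrum enrichment. I would first try the direct route: compose in $\bb ZF^E_*$ and show the map descends to $M$-coinvariants. Bilinearity is automatic. For descent, note that in $\bb Z\otimes_M-$ every element $[x]$ with $x\in F(\mathbf n)$ is identified with $\gamma_*[x]$ for any $\gamma$, by tameness. Lemma~\ref{zanyatno} shows the $M$-action is conjugation with no sign. So it suffices that composition is $\Sigma_n\times\Sigma_m$-equivariant and respects the stabilization $u_*$ up to a permutation. The permutation appears because the stabilization in the source and target is inserted in different coordinates, and it is killed in the coinvariants. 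I expect this bookkeeping, namely checking that $(u\,g)\circ f$ and $g\circ(u\,f)$ agree with $u(g\circ f)$ up to a block permutation, to be the technical core. The connectedness of supports is preserved because composition of connected-support correspondences is a sum of connected-support correspondences, by linear extension.

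\emph{Remaining claims.} Additivity and coproducts given by disjoint unions follow because $\bb ZF^E(X\sqcup X',Y)=\bb ZF^E(X,Y)\oplus\bb ZF^E(X',Y)$, and similarly in the second variable, since supports are connected. These decompositions commute with the $M$-action and hence with coinvariants.

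For commutative $E$, the product $f\boxtimes f'$ is built from products of schemes, $\bb P$-smash factors and the ring multiplication. The symmetry isomorphism of $\bb P^{\w n}\w\bb P^{\w n'}$ introduces a shuffle permutation, which again becomes trivial in the coinvariants. Therefore the interchange law and the symmetry hold strictly in $\wt{\bb Z}F^E(k)$.

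Finally, $\Phi$ is the composite $\bb ZF_*(k)\to\bb ZF^E(X,Y)\to\bb Z\otimes_M\bb ZF^E(X,Y)$, induced by the unit $S_T\to E$ at level $n$. It is strict monoidal because $S_T\to E$ is a map of ring spectra, so it commutes with all of the constructions above.
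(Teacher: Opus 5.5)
Your announced plan is exactly the paper's proof: enrich $\Sm_k$ over symmetric $S^1$-spectra via $\Fr^{E,\Sigma}_*(X,Y)$, pass to $\wt\pi_0$ by lax monoidality of true homotopy groups, and identify $\wt\pi_0$ with $\bb Z\otimes_M\bb ZF^E(X,Y)$ by Corollary~\ref{naivevstruecor}. The one difference is that the paper does not build the pairings. It takes the enrichment, symmetric monoidal for commutative $E$, ready-made from \cite[Theorem~5.2]{GG23} and concludes with \cite[Theorem~I.6.16]{Sch}. So the enrichment is not an obstacle to be avoided. If you do construct it by hand, associativity and unitality must hold strictly, and they do, by associativity of the multiplication of $E$. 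The block permutations enter only in the compatibility of the pairing with the structure maps in the $f$-variable.

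Your fallback \emph{direct route} is a genuinely different, more elementary argument, and it is sound. An element of $M$ acts on a class $[x]$, $x\in F(\mathbf n)$, as $\gamma_*u_*^{k-n}$ with $\gamma\in\Sigma_k$, and Lemma~\ref{zanyatno} shows there is no sign. So it suffices to check three things about the levelwise composition $\bb ZF^E_m(Y,W)\otimes\bb ZF^E_n(X,Y)\to\bb ZF^E_{n+m}(X,W)$:
\begin{enumerate}
\item it is $\Sigma_n\times\Sigma_m$-equivariant for the conjugation action;
\item it is strictly compatible with stabilizing $g$, by associativity of the multiplication;
\item it is compatible with stabilizing $f$ up to a block permutation. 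This is where you use that $u_n$ is multiplication by the unit $T\to E_1$, together with centrality of the unit.
\end{enumerate}
This avoids true homotopy groups and Schwede's spectral sequence entirely. In effect you work with the description $\colim_n\bb Z\otimes_{\Sigma_n}\bb ZF^E_n$ of Lemma~\ref{rakvere}.

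What the paper's route buys is the spectral category itself. It is reused in Section~\ref{rational}, through the category $\cc O^E$ and the map $\alpha:\cc O^E\to H(\wt\pi_0(\cc O^E))$. With the direct route you would additionally need to know that your composition agrees with the one induced on $\wt\pi_0$.
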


\begin{proof}
By~\cite[Theorem~5.2]{GG23} $\Sm_k$ is enriched over $Sp_{S^1}^\Sigma$. Its symmetric spectra of morphisms are given by
$\Fr_*^{E,\Sigma}(X,Y)$. This spectral category  is symmetric monoidal if $E$ is a commutative ring $T$-spectrum. The construction of the
strict monoidal functor $\Phi$ is straightforward.
Our statement now follows from Corollary~\ref{naivevstruecor} and~\cite[Theorem~I.6.16]{Sch}.
\end{proof}

We want to give another description of the Abelian groups $\wt{\bb Z}F^E(X,Y/V)$, where $E$ is a symmetric motivic Thom
$T$-spectrum. To this end, we will define the structure of a tame $M$-set on the framed sheaf $\Fr^E(Y/V)$.
Consider an $I$-functor $\underline{\Fr}^E(Y/V)$ from $I$ to presheaves of pointed sets. On
objects, this $I$-functor is given by
   $$\underline{\Fr}^E(Y/V)(\mathbf n):=\Fr_n^E(Y/V).$$
If $\alpha:\mathbf n\to \mathbf m$ is an injective map,
then $\alpha_*:\underline{\Fr}^E(Y/V)(\mathbf n)\to\underline{\Fr}^E(Y/V)(\mathbf m)$
is given as follows. We choose a permutation $\gamma\in\Sigma_m$
such that $\gamma(i)=\alpha(i)$ for all $i=1,\dots,n$ and set
   \begin{equation*}
    \alpha_*(x):=\gamma_*(\sigma_*^{m-n}(x))
   \end{equation*}
where $\iota_*:\Fr_n(Y/V)\to\Fr_{n+1}(Y/V)$ is the 
stabilization map 
   \begin{equation}\label{taktak}
    \uhom(\PP^{\w n},Y/V\w E_n)\xrightarrow{-\wedge\sigma}\uhom(\PP^{\w n}\wedge\PP^{\w 1},Y/V\w E_n\wedge T)
       \xrightarrow{u_n}\uhom(\PP^{\w n+1},Y/V\w E_{n+1}).
   \end{equation}

We claim that the definition of $\alpha_*(x)$ is independent of the choice of $\gamma$.
Suppose $\gamma'\in\Sigma_m$ is another permutation which agrees with $\alpha$
on $\mathbf n$. Then $\gamma^{-1}\gamma'$ is a permutation
of $\mathbf m$ which fixed the numbers $1,\dots,n$, so it is of the form
$\gamma^{-1}\gamma'=1\times\tau$ for some $\tau\in\Sigma_{m-n}$, 
where $1$ is the unit of $\Sigma_n$. As $E$ is a symmetric $T$-spectrum, one has a commutative diagram
   $$\xymatrix@C=15mm{ \bb P^{\w n+(m-n)} \ar[r]^-{x\w \sigma^{m-n}} \ar[d]_{\id\w\tau} &
       Y/V\w E_n\w T^{m-n} \ar[r]^-{u_*^{m-n}} \ar[d]^{\id\w\tau} &
       Y/V\w E_m \ar[d]^{1\times\tau}\\
       \bb P^{\w n+(m-n)} \ar[r]_-{x\w\sigma^{m-n}} &
       Y/V\w E_n\w T^{m-n} \ar[r]_{u_*^{m-n}} & Y/V\w E_m }$$
It follows that
   $(1\times\tau)_*(\iota^{m-n}_*(x))=\iota^{m-n}_*(x),$
and hence the claim.

The inclusion $\mathbf n\hookrightarrow \mathbf{n+1}$ induces the map $\iota_*$ 
over which the colimit $\Fr^E(Y/V)$ is formed. 
Denote by $\bb N$ the subcategory of $I$ containing all objects 
but only the inclusions as morphisms. Then
   $$\Fr^E(Y/V)= \colim_{\bb N}\ \underline{\Fr}^E(Y/V).$$

As above we can similarly define the $I$-subfunctor $\underline{F}^E(X,Y/V)\subset\underline{\Fr}^E(X,Y/V)$, $X\in\Sm_k$,
consisting of $E$-framed correspondences with connected supports. We can
extend an $I$-functors $F$ to a functor from the category $I_\omega$ to pointed sets
by the left Kan extension along the inclusion $I\hookrightarrow I_\omega$.
By construction, the value of the extension
at the object $\omega$ is the colimit of $F$ formed over the subcategory $\bb N$ of inclusions, denoted by $F(\omega)$.
Then the $M$-action on the colimit of $F$ is the action of the endomorphisms of $\omega$ in $I_\omega$ on $F(\omega)$.
In this way we get pointed $M$-sets $F^E(X,Y/V)$ and $\Fr^E(X,Y/V)$. Denote by 
   $$\wt F^E(X,Y/V)=F^E(X,Y/V)/\{mx\sim x\mid m\in M\}$$
and
   $$\wt\Fr^E(X,Y/V)=\Fr^E(X,Y/V)/\{mx\sim x\mid m\in M\}.$$
We also denote by
   $$\wt F^E_n(X,Y/V)=F_n^E(X,Y/V)/\{\tau x\sim x\mid\tau\in\Sigma_n\}$$
   $$\wt{\bb Z} F^E_n(X,Y/V)=\bb ZF_n^E(X,Y/V)/\{\tau x\sim x\mid\tau\in\Sigma_n\}=\bb Z\otimes_{\Sigma_n}\bb ZF_n^E(X,Y/V)$$
and
   $$\wt\Fr^E_n(X,Y/V)=\Fr_n^E(X,Y/V)/\{\tau x\sim x\mid\tau\in\Sigma_n\}.$$
The stabilization map $\iota_*:\Fr_n^E(Y/V)\to\Fr_{n+1}^E(Y/V)$ defined in~\eqref{taktak} induces
stabilization maps $i_*:\wt\Fr_n^E(X,Y/V)\to\wt\Fr^E_{n+1}(X,Y/V)$ and $j_*:\wt F^E_n(X,Y/V)\to\wt F^E_{n+1}(X,Y/V)$.
Observe that $i_*$ is functorial in $X$, and hence it induces a map of presheaves
$i_*:\wt\Fr_n^E(Y/V)\to\wt\Fr_{n+1}^E(Y/V)$.

\begin{lem}\label{rakvere}
The natural maps $\alpha:\colim_{n}\wt\Fr_n^E(Y/V)\to\wt\Fr^E(Y/V)$ and $\beta:\colim_{n}\wt F_n^E(X,Y/V)\to\wt F^E(X,Y/V)$
are bijective. As a consequence, the natural morphism 
   $$\colim_n\bb Z\otimes_{\Sigma_n}\bb ZF_n^E(Y/V)=\colim_{n}\wt{\bb Z} F_n^E(Y/V)\to\wt{\bb Z} F^E(Y/V)$$ 
of presheaves of free Abelian groups is an isomorphism.
\end{lem}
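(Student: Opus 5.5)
The plan is to compare two quotients of the same filtered colimit. On one side we quotient each level by $\Sigma_n$ and then take the colimit. On the other we take the colimit $\Fr^E(Y/V)=\colim_{\bb N}\underline{\Fr}^E(Y/V)$ and then quotient by the $M$-action. We work sectionwise over $X\in\Sm_k$, so it suffices to treat pointed sets. The argument is the same for $\Fr$ and for $F$: the $I$-subfunctor $\underline F^E$ is preserved by all $\alpha_*$, since permutations preserve connectedness of supports and stabilization preserves it by the geometric connectedness of $\sigma$.

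\emph{Step 1: $\alpha$ is well defined.} The stabilization $\iota_*$ is the map induced by the inclusion $\mathbf n\hookrightarrow\mathbf{n+1}$, and $\alpha_*$ for injections is functorial. Hence $\iota_*(\tau x)=(\tau\times 1)_*\iota_*(x)$, so $\iota_*$ descends to $i_*$ on $\Sigma_n$-orbits. Moreover, a permutation $\tau\in\Sigma_n$ extends to an element $f\in M$ fixing all numbers $>n$, and $f_*[x]=[\tau_*x]$. So $[x]$ and $[\tau x]$ have the same image in $\wt\Fr^E(Y/V)$, and $\alpha$ is well defined. Surjectivity of $\alpha$ is immediate, since every element of $\Fr^E(Y/V)$ is represented by some $x\in\Fr^E_n(Y/V)$.

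\emph{Step 2: injectivity of $\alpha$, the main point.} The relation $\sim$ on $\Fr^E(Y/V)$ is generated by $[x]\sim f_*[x]$ for $f\in M$. So it suffices to show that for $x\in\Fr_n^E$ and $f\in M$, the classes of $x$ and of a representative of $f_*[x]$ agree in $\colim_n\wt\Fr^E_n$. Put $m=\max f(\mathbf n)$, with $m\geq n$ by injectivity. Then $f_*[x]$ is represented by $(f|_{\mathbf n})_*(x)=\gamma_*(\iota_*^{m-n}x)\in\Fr^E_m$, where $\gamma\in\Sigma_m$ extends $f|_{\mathbf n}$. In $\wt\Fr^E_m$ this is the $\Sigma_m$-orbit of $\iota^{m-n}_*x$, i.e.\ $i^{m-n}_*$ of the orbit of $x$. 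So the two classes coincide in the colimit. Since the generating relation is respected and both quotients are by the equivalence relation generated by these moves (filtered colimits commute with quotients), $\alpha$ is injective. The only care needed is the chain-of-relations bookkeeping: each elementary step $[y]\sim f_*[y]$ can be lifted to a common finite level, which is exactly what the argument above does. I expect this step to be the main, though mild, obstacle. The same argument applies verbatim to $\beta$.

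\emph{Step 3: linearization.} The group $\wt{\bb Z}F^E(Y/V)=\bb Z\otimes_M\bb ZF^E(Y/V)$ is the free abelian group on $\wt F^E(Y/V)$ modulo the basepoint. Likewise, $\bb Z\otimes_{\Sigma_n}\bb ZF^E_n$ is the reduced free abelian group on the orbit set $\wt F^E_n$, because the permutation action on the basis is by a group (an $M$- or $\Sigma_n$-set basis gives coinvariants free on orbits). The reduced free abelian group functor commutes with filtered colimits, so the bijection $\beta$ yields the claimed isomorphism. Naturality in $X$ makes this an isomorphism of presheaves.
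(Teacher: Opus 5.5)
Your proposal is correct and is essentially the paper's argument: surjectivity is immediate, and injectivity rests on the fact that, at the finite level $m=\max f(\mathbf n)$, the action of $f\in M$ on a representative is just stabilization followed by a permutation in $\Sigma_m$. The paper phrases this by choosing $h\in\Sigma_m$ with $hf|_{\mathbf n}=\id$, whereas you show directly that the two generated equivalence relations coincide; your version also handles chains of $M$-relations explicitly and spells out the passage to the linearized statement via coinvariants of permutation modules, which the paper leaves as ``a consequence''.
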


\begin{proof}
The map $\alpha$ is obviously surjective. Suppose $x,y\in\wt\Fr_n^E(X,Y/V)$ are such that
$[y]=f_*[x]$ for some $f\in M$, where $[y]=\alpha(y)$, $[x]=\alpha(x)$. Let $m=\max\{f(\mathbf n)\}$. There is
$h\in\Sigma_m$ such that $hf|_{\mathbf n}=\id$. We can regard $h$ as an element of $M$ by setting $h(i)=i$
for all $i\geq m+1$. One has $h_*[y]=(hf)_*[x]=[x]$, and so $h_*(u_*^{m-n}(y))=u_*^{m-n}(x)$ in $\Fr_m^E(X,Y/V)$. It follows that $\alpha$
is also injective. The map $\beta$ is bijective for the same reasons.
\end{proof}

The previous lemma justifies the following terminology.

\begin{dfn}\label{permfree}
Let $E$ be a symmetric motivic Thom ring $T$-spectrum. The additive category $\wt{\bb Z}F^E(k)$ of Theorem~\ref{catcor}
will be referred to as the {\it category of permutation free linear $E$-framed correspondences}. In the special case when
$E=S_T$ we will omit $E$ from notation
and refer to the category $\wt{\bb Z}F(k)$ as the {\it category of permutation free linear framed correspondences}.
\end{dfn}

\begin{rem}\label{permfreerem}
Let $E$ be a symmetric motivic Thom ring $T$-spectrum. We also have a category, denoted by $\wt\Fr^E(k)$, whose objects are those of
$\Sm_k$ and morphisms are given by the sets $\wt\Fr^E(X,Y)$, $X,Y\in\Sm_k$.
Lemma~\ref{rakvere} describes the sets as $\colim_n\Fr_n^E(X,Y)/\Sigma_n$. For this reason we refer to $\wt\Fr^E(k)$ as the
{\it category of permutation free $E$-framed correspondences}. Observe that there is a natural functor of categories
$\Fr_*(k)\to\wt\Fr^E(k).$
\end{rem}

Let $E$ be a symmetric motivic Thom $T$-spectrum and let $A$ be an Abelian group. Choose a free resolution $F_*\to\bb Z$
in the category of $M$-modules such that $F_0=\bb Z[M]$. We have
   $$\Tor_p^{\bb Z[M]}(\bb Z,\bb ZF^E(X,Y/V)\otimes A)=H_p(F_*\otimes_M\bb ZF^E(X,Y/V)\otimes A).$$
We have that $F_*\otimes_M\bb ZF^E(Y/V,A)$ is a chain 
complex of presheaves of Abelian groups. Thus $\Tor_p^{\bb Z[M]}(\bb Z,\bb ZF^E(Y/V,A))$
is a presheaf. We denote this presheaf by $\bb ZF_{\tor}^E(p,Y/V,A)$.

\begin{prop}\label{R-module}
Let $R$ be a symmetric motivic Thom ring $T$-spectrum. Suppose the motivic Thom $T$-spectrum $E$
is a right $R$-module. Then $\bb ZF_{\tor}^E(p,Y/V,A)$ is a $\wt{\bb Z}F^R(k)$-presheaf for all $p\geq 0$. In particular, it is a 
stable $\bb ZF_*(k)$-presheaf.
\end{prop}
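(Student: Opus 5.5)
We plan to obtain the $\wt{\bb Z}F^R(k)$-presheaf structure on $\bb ZF_{\tor}^E(p,Y/V,A)$ from a presheaf structure on the \emph{true} stable homotopy groups of symmetric spectra. The idea is to realize the whole Schwede spectral sequence of Lemma~\ref{naivevstrue}, including its $E^2$-term, functorially in the variable $X$ along $R$-framed correspondences. Concretely, since $E$ is a right $R$-module, the composition pairing of~\cite[Theorem~5.2]{GG23} generalizes to a pairing of symmetric $S^1$-spectra
   $$\Fr_*^{E,\Sigma}(X,Y/V)\wedge\Fr_*^{R,\Sigma}(X',X)\longrightarrow\Fr_*^{E,\Sigma}(X',Y/V).$$
This pairing is obtained by smashing an $R$-framed correspondence $X'\wedge\PP^{\w m}\to X\wedge R_m\wedge S^m$ with an $E$-framed one $X\wedge\PP^{\w n}\to Y/V\wedge E_n\wedge S^n$, and then applying the action maps $E_n\wedge R_m\to E_{n+m}$, which are $\Sigma_n\times\Sigma_m$-equivariant. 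Tensoring with $A$ (or using the linear version $\bb ZF^{E,\Sigma}_*\otimes A$) gives the same pairing with coefficients. Thus $X\mapsto\bb ZF^{E,\Sigma}_*(X,Y/V)\otimes A$ becomes a right module over the spectral category with morphism spectra $\Fr_*^{R,\Sigma}$.

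The next step is to pass to $\pi_0$ and to the $M$-module level. The pairing induces, on naive $\pi_0$, a map of $I$-functors
$$\underline\pi_0(\text{E-part})\otimes\underline\pi_0(\text{R-part})\to\underline\pi_0(\text{E-part})$$
over the concatenation $I\times I\to I$. By Schwede's theory~\cite{Sch08}, this yields a bilinear map of tame $M$-modules
$$\bb ZF^E(X,Y/V)\otimes A\;\times\;\bb ZF^R(X',X)\to\bb ZF^E(X',Y/V)\otimes A,$$
which is compatible with the $M$-actions through the "box" operation. The key point is that for a fixed $\phi\in\bb ZF_m^R(X',X)$, the operator $\phi^*$ is a map of tame $M$-modules up to the shift by $m$. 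Schwede's shift map is an isomorphism after applying $\bb Z\otimes_M-$, and more generally on $\Tor^{\bb Z[M]}_p(\bb Z,-)$, because the shift functor on tame $M$-modules is exact and induces the identity on derived coinvariants. It follows that $\phi$ induces a well-defined map on $\Tor_p^{\bb Z[M]}(\bb Z,-)$. Moreover, this map depends on $\phi$ only through its class in $\bb Z\otimes_M\bb ZF^R(X',X)$, since $M$-conjugates of $\phi$ act on the target through elements of $M$, and those elements act trivially on $\Tor(\bb Z,-)$.

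Alternatively, and more cleanly, we can use naturality of the spectral sequence~\eqref{schwede}. The $R$-module pairing makes the true homotopy $\wt\pi_*$ of the symmetric spectra $\bb ZF^{E,\Sigma}_*(X,Y/V)\otimes A\otimes H\bb Z[-p]$-type objects into modules over $\wt\pi_0\Fr^{R,\Sigma}_*=\wt{\bb Z}F^R$ (Corollary~\ref{naivevstruecor}). Since $\bb ZF^{E,\Sigma}_*\otimes A$ is an Eilenberg--MacLane-type symmetric spectrum with $\pi_q=0$ for $q\neq0$, its Schwede spectral sequence collapses, giving $\wt\pi_p=\Tor_p^{\bb Z[M]}(\bb Z,\bb ZF^E(X,Y/V)\otimes A)$. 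The action of $\wt{\bb Z}F^R(X',X)$ on $\wt\pi_p$ then gives the required presheaf structure. Functoriality, meaning associativity and unit, is inherited from the spectral category of Theorem~\ref{catcor}. The final claim follows by restricting along the strict functor $\Phi:\bb ZF_*(k)\to\wt{\bb Z}F^R(k)$ of Theorem~\ref{catcor}. Stability holds because $\sigma$ maps to the identity in $\wt{\bb Z}F^R(\pt,\pt)$, as it is the suspension of the unit.

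The main obstacle is constructing the module pairing at the level of symmetric spectra, and in particular checking equivariance and associativity of $E_n\wedge R_m\to E_{n+m}$ combined with the conjugation action~\eqref{sigmaact}. We will handle this by mimicking the proof of~\cite[Theorem~5.2]{GG23}, replacing the ring multiplication by the module action.
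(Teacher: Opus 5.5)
Your main argument (the third paragraph) is essentially the paper's proof. You make $\bb ZF_*^{E,\Sigma}(-,Y/V)\otimes A$ a right module over the spectral category of \cite[Theorem~5.2]{GG23} via the $R$-action on $E$. Since it is stably an Eilenberg--MacLane spectrum, Schwede's spectral sequence collapses to $\wt\pi_p\cong\Tor_p^{\bb Z[M]}(\bb Z,\bb ZF^E(-,Y/V)\otimes A)$. You then let $\wt\pi_0=\wt{\bb Z}F^R$ act via Corollary~\ref{naivevstruecor} and \cite[Theorem~I.6.16]{Sch}, and restrict along $\Phi$ using $\Phi(\sigma_X)=\id_X$.

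The hands-on route in your second paragraph is not needed, and you should drop it. Its claims that shift maps and $M$-conjugates act trivially on $\Tor_p^{\bb Z[M]}(\bb Z,-)$ are asserted without proof.
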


\begin{proof}
By~\cite[Theorem~5.2]{GG23} $\Sm_k$ is enriched over symmetric $S^1$-spectra whose morphism spectra
are given by $\Fr_*^{R,\Sigma}(X,Y)$, $X,Y\in\Sm_k$. Denote this spectral category by $\Fr_*^{R,\Sigma}(k)$.
As $E$ is a right $R$-module, $\Fr_*^{E,\Sigma}(-,Y/V)$ enjoys the structure of a right $\Fr_*^{R,\Sigma}(k)$-module,
where the relevant pairings literally repeat pairings from~\cite[Theorem~5.2]{GG23}. In a similar fashion, one has 
a linear spectral category $\bb Z F_*^{R,\Sigma}(k)$ whose symmetric spectra of morphisms are given by
$\bb Z F_*^{R,\Sigma}(X,Y)$, $X,Y\in\Sm_k$, as well as a right $\bb Z F_*^{R,\Sigma}(k)$-module $\bb Z F_*^{E,\Sigma}(-,Y/V)$.
Tensoring this module with $A$ produces a right $\bb Z F_*^{R,\Sigma}(k)$-module $\bb Z F_*^{E,\Sigma}(-,Y/V)\otimes A$.

As we have noted in the beginning of the section, $\bb Z F_*^{E,\Sigma}(-,Y/V)\otimes A$ is stably equivalent to
the Eilenberg–MacLane spectrum of 
$\bb Z F^{E}(-,Y/V)\otimes A$, and hence it has only one presheaf of stable homotopy groups 
$\pi_0(\bb Z F_*^{E,\Sigma}(-,Y/V)\otimes A)=\bb Z F^{E}(-,Y/V)\otimes A$. So the spectral
sequence~\eqref{schwede} for $\bb Z F_*^{E,\Sigma}(-,Y/V)\otimes A$ collapses onto the axis $q= 0$ to isomorphisms
   $$\tilde\pi_p(\bb Z F_*^{E,\Sigma}(-,Y/V)\otimes A)\cong\bb ZF_{\tor}^E(p,Y/V,A).$$
In particular, the true homotopy groups of $\bb Z F_*^{E,\Sigma}(-,Y/V)\otimes A$ need not be concentrated in degree 0.
The fact that $\bb ZF_{\tor}^E(p,Y/V,A)$ is a $\wt{\bb Z}F^R(k)$-presheaf for all $p\geq 0$ follows from
Corollary~\ref{naivevstruecor} and~\cite[Theorem~I.6.16]{Sch}.

By Theorem~\ref{catcor} there is a natural functor
   $$\Phi:\bb ZF_*(k)\to\wt{\bb Z}F^R(k).$$
Note that $\Phi(\sigma_X:X_+\w\bb P^{\w 1}\to X_+\w T)=\id_X$. So
$\bb ZF_{\tor}^E(p,Y/V,A)$ is a stable $\bb ZF_*(k)$-presheaf.
\end{proof}

\subsection{Torsion framed motivic cohomology}
Let $\Gr\Ab$ denote the closed symmetric monoidal category of graded Abelian 
groups (under graded tensor product, with Koszul sign convention for the symmetry isomorphism).
In fact, the category of permutation free framed correspondences are degree zero correspondences of 
a graded category of correspondences. More precisely, the proof of the preceding proposition together with
Theorem~\ref{catcor} imply the following statement.

\begin{thm}\label{torsioncorrs}
Let $R$ be a symmetric motivic Thom ring $T$-spectrum. The category $\Sm_k$ is enriched over $\Gr\Ab$
with morphism objects given by $\bigoplus_{p\geq 0}\bb ZF^R_{\tor}(p,X,Y)\in\Gr\Ab$,
$X,Y\in\Sm_k$. We call it the \emph{category of linear $(R,\Tor)$-framed correspondences\/} and denote it by $\wt{\bb Z}F^R_{\tor}(k)$. 
The elements of 
$\bb ZF^R_{\tor}(p,X,Y)$ are called \emph{linear $(R,\Tor)$-framed correspondences of level $p$}.
This category is symmetric monoidal whenever $R$ is commutative. Moreover,
if a motivic Thom $T$-spectrum $E$ is a right $R$-module then $\bigoplus_{p\geq 0}\bb ZF_{\tor}^E(p,Y/V,A)$ is a right
$\wt{\bb Z}F^R_{\tor}(k)$-module for any Abelian group $A$.
\end{thm}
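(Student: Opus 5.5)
The plan is to realize the graded morphism groups as true stable homotopy groups of linear symmetric spectra. Composition and the module structure will then come from the spectral-category structure of~\cite[Theorem~5.2]{GG23} together with the pairing on true homotopy groups of symmetric spectra in~\cite[Theorem~I.6.16]{Sch}.

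First, consider the linear spectral category $\bb ZF_*^{R,\Sigma}(k)$ from the proof of Proposition~\ref{R-module}. Its objects are those of $\Sm_k$ and its morphism symmetric spectra are $\bb ZF_*^{R,\Sigma}(X,Y)$. Composition is obtained by linearizing the pairings of~\cite[Theorem~5.2]{GG23}, namely
$$\Fr_*^{R,\Sigma}(Y,Z)\wedge\Fr_*^{R,\Sigma}(X,Y)\to\Fr_*^{R,\Sigma}(X,Z).$$
It is symmetric monoidal when $R$ is commutative. As in the proof of Proposition~\ref{R-module}, each $\bb ZF_*^{R,\Sigma}(X,Y)$ has a single naive homotopy group $\bb ZF^R(X,Y)$ in degree $0$. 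Hence Schwede's spectral sequence~\eqref{schwede} collapses and gives natural isomorphisms
$$\wt\pi_p(\bb ZF_*^{R,\Sigma}(X,Y))\cong\bb ZF^R_{\tor}(p,X,Y).$$
For $p<0$ these groups vanish, since $\Tor$ is concentrated in nonnegative degrees.

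Second, I will invoke~\cite[Theorem~I.6.16]{Sch}. The true homotopy groups form a lax symmetric monoidal functor $\wt\pi_*:(Sp^\Sigma,\wedge)\to(\Gr\Ab,\otimes)$, and its symmetry carries the Koszul sign. Applying a lax monoidal functor to an enriched category yields a $\Gr\Ab$-enriched category. So composition
$$\wt\pi_*(\bb ZF_*^{R,\Sigma}(Y,Z))\otimes\wt\pi_*(\bb ZF_*^{R,\Sigma}(X,Y))\to\wt\pi_*(\bb ZF_*^{R,\Sigma}(Y,Z)\wedge\bb ZF_*^{R,\Sigma}(X,Y))\to\wt\pi_*(\bb ZF_*^{R,\Sigma}(X,Z))$$
is associative and unital. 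In degree $0$ it recovers $\wt{\bb Z}F^R(k)$ of Theorem~\ref{catcor}, via Corollary~\ref{naivevstruecor}. Likewise, if $R$ is commutative, the monoidal structure of the spectral category (product of schemes) transports through the lax symmetric monoidal structure, giving a symmetric monoidal $\Gr\Ab$-category.

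Third, for the module statement I will use the right $\bb ZF_*^{R,\Sigma}(k)$-module $\bb ZF_*^{E,\Sigma}(-,Y/V)\otimes A$ constructed in the proof of Proposition~\ref{R-module}. Its true homotopy groups are $\bigoplus_p\bb ZF_{\tor}^E(p,Y/V,A)$ by the same collapse argument. The lax monoidal functor $\wt\pi_*$ turns a right module over a spectral category into a right module over the graded category obtained by applying $\wt\pi_*$.

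The main obstacle is the compatibility check: the pairing on true homotopy groups must be compatible with the composition maps of the linear spectral category, and it must make sense for objects that are not semistable. In particular, the linear pairing $\bb ZF_*^{R,\Sigma}(Y,Z)\wedge\bb ZF_*^{R,\Sigma}(X,Y)\to\bb ZF_*^{R,\Sigma}(X,Z)$ has to be induced from a map out of the smash product of symmetric spectra, so that it is $\Sigma_n\times\Sigma_m$-equivariant with respect to the conjugation action~\eqref{sigmaact}. I would verify this first, by linearizing the unstable pairings of~\cite{GG23} degreewise and observing that $\bb Z(-)$ is strong monoidal on pointed simplicial sets. After that, the rest is formal.
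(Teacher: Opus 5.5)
Your proposal is correct and is essentially the paper's own argument. The paper obtains Theorem~\ref{torsioncorrs} from the proof of Proposition~\ref{R-module}: the linear spectral category $\bb ZF_*^{R,\Sigma}(k)$, the right module $\bb ZF_*^{E,\Sigma}(-,Y/V)\otimes A$, and the collapse of Schwede's spectral sequence to $\wt\pi_p\cong\bb ZF^E_{\tor}(p,Y/V,A)$. It combines this with the passage to true homotopy groups via~\cite[Theorem~I.6.16]{Sch} used in Theorem~\ref{catcor}, exactly as you do.

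One refinement to your compatibility check: the level-$n$ term $\bb ZF^R_n(X,Y)\otimes\bb Z(S^n)$ is not $\bb Z(\Fr^{R,\Sigma}_n(X,Y))$. It is the quotient of it by the additivity relation that splits supports into connected components. So strong monoidality of $\bb Z(-)$ alone does not give the linear pairing. You also need that composition of framed correspondences respects this relation, which is the standard fact that composition descends to $\bb ZF_*(k)$.
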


\begin{dfn}\label{tormotcohom}
Let $E$ be a symmetric motivic Thom $T$-spectrum. 
For every integers $p,q\geq 0$ the motivic complex $\bb Z^E_{\tor}(p,q)$ is defined
as the following (bounded above) cochain complex of stable $\bb ZF_*(k)$-sheaves:
   $$\bb Z^E_{\tor}(p,q)=C_*\bb ZF_{\tor}^E(p,\bb G^{\w q})_{\nis}[-q]$$
(the shift is cohomological).
If $A$ is any other Abelian group then $A^E_{\tor}(p,q) $ is the cochain complex of
stable $\bb ZF_*(k)$-sheaves $C_*\bb ZF^E_{\tor}(p,\bb G^{\w q},A)_{\nis}[-q]$. If $q<0$ we set 
   $$A^E_{\tor}(p,q):=\uhom(\bb G^{\w q},A_{\tor}^E(p,0))[-q].$$
   
The {\it $(E,\Tor_p)$-framed or just torsion $E$-framed motivic cohomology groups $H^{n,q}_{E,\Tor_p}(X,A)$ with coefficients in $A$} are defined to
be the hypercohomology of the motivic complexes $A^E_{\tor}(p,q)$ with respect to the Nisnevich
topology:
   $$H^{n,q}_{E}(X,A,\Tor_p):=H_{\nis}^n(X,A^E_{\tor}(p,q)).$$
If $E=S_T$ we will write $A^{fr}_{\tor}(p,q)$ for $A_{\tor}^E(p,q)$ and $H^{n,q}_{fr}(X,A,\Tor_p):=H_{\nis}^n(X,A^{fr}_{\tor}(p,q)).$
\end{dfn}

The terminology of the preceding definition is justified by the following useful statement.

\begin{lem}\label{torsionfr}
$\bb ZF_{\tor}^E(p,\gmp^{\w q},A)\otimes\bb Q=0$ for any $p\geq 1$ and $q\in\bb Z$. 
\end{lem}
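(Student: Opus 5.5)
Since $\bb Q$ is flat over $\bb Z$, Tor commutes with rationalization:
$$\bb ZF_{\tor}^E(p,\gmp^{\w q},A)\otimes\bb Q=\Tor_p^{\bb Z[M]}(\bb Z,\bb ZF^E(\gmp^{\w q},A))\otimes\bb Q\cong\Tor_p^{\bb Q[M]}(\bb Q,\bb ZF^E(\gmp^{\w q},A)\otimes\bb Q).$$
This holds because $F_*\otimes\bb Q$ is a free $\bb Q[M]$-resolution of $\bb Q$ and homology commutes with $-\otimes\bb Q$. It therefore suffices to show that for any tame $M$-module $W$ over $\bb Q$ (here $W=\bb ZF^E(U,\gmp^{\w q})\otimes A\otimes\bb Q$, tame by Corollary~\ref{M-module}), $\Tor_p^{\bb Q[M]}(\bb Q,W)=0$ for $p\geq1$. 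For $q<0$ the groups are defined through $\uhom(\gmp^{\w|q|},-)$, which is a direct summand of evaluation on $U\times\bb G_m^{\times|q|}$ and is exact. So the case $q<0$ reduces to the case $q=0$, evaluated on different schemes.

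The key step is a general statement about tame modules, which I would take from Schwede~\cite{Sch08}. There, the homology $\Tor_*^{\bb Z[M]}(\bb Z,W)$ of a tame $M$-module $W=F(\omega)$ coming from an $I$-functor $F$ is identified as a colimit over $n$ of the group homology $H_*(\Sigma_n;\,\cdot\,)$. Concretely, $\bb Z\otimes_M W=\colim_n F(\mathbf n)_{\Sigma_n}$, which matches Lemma~\ref{rakvere}. In higher degrees, Schwede shows that $\Tor_p^{\bb Z[M]}(\bb Z,W)$ is a filtered colimit of groups annihilated by $n!$-type integers, or equivalently that these groups are built from $H_p(\Sigma_n;F(\mathbf n))$. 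Here each $F(\mathbf n)=\bb ZF^E_n(U,\gmp^{\w q})\otimes A$ is a $\Sigma_n$-module, by Lemma~\ref{zanyatno}. By the transfer argument, $H_p(\Sigma_n;V)$ is killed by $n!$ for $p\geq1$, so it vanishes after tensoring with $\bb Q$. Passing to the filtered colimit then gives the vanishing.

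If a clean citation is unavailable, I would argue directly via the spectral sequence~\eqref{schwede}, as in the proof of Proposition~\ref{R-module}. We have $\Tor_p^{\bb Z[M]}(\bb Z,\bb ZF^E(U,Y)\otimes A)\cong\wt\pi_p(\bb ZF^{E,\Sigma}_*(U,Y)\otimes A)$. Rationally, symmetric spectra whose naive homotopy groups are concentrated in degree $0$ are semistable. The reason is that the $M$-action on a rational tame module has no higher homology: the $\Sigma_n$-coinvariants functor is exact over $\bb Q$, and a tame module is the colimit of its $\Sigma_n$-fixed stages. For semistable spectra the true and naive homotopy groups agree, so $\wt\pi_p=\pi_p=0$ for $p\geq1$.

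The main obstacle is making this rigorous. One must show that the derived $M$-coinvariants of a tame $\bb Q[M]$-module vanish in positive degrees, i.e.\ that the standard resolution computing $\Tor^{\bb Q[M]}$ is quasi-isomorphic to $\colim_n$ of the $\Sigma_n$-group homology complexes. The first thing I would try is Schwede's explicit comparison in~\cite[Section~5]{Sch08}, which reduces $\Tor^{\bb Z[M]}_*(\bb Z,-)$ on tame modules to symmetric-group homology; after that, Maschke's theorem finishes the argument.
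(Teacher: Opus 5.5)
Your reduction is the paper's own. The paper's proof is a one-line citation of \cite[Example~5.7]{Sch08}, namely $\Tor_p^{\mathbb Z[M]}(\mathbb Q,W)=0$ for every tame $W$ and $p\geq 1$, combined with tameness from Corollary~\ref{M-module}. Your flatness step and your handling of $q<0$ are fine, and if you simply invoke that example you have the paper's argument.

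However, both justifications you sketch for the key vanishing are wrong, so as written the central step is unsupported.

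First, $\Tor_*^{\mathbb Z[M]}(\mathbb Z,F(\omega))$ is not a colimit of, nor ``built from'', the groups $H_*(\Sigma_n;F(\mathbf n))$. Take the constant $I$-functor $\mathbb Z$. Then $\Tor_p^{\mathbb Z[M]}(\mathbb Z,\mathbb Z)=0$ for $p\geq 1$ by \cite[Lemma~5.3(ii)]{Sch08}, the fact used in the proof of Theorem~\ref{rakushka}. On the other hand, $\colim_n H_1(\Sigma_n;\mathbb Z)=H_1(\Sigma_\infty;\mathbb Z)=\mathbb Z/2$. Only the degree-zero identification $\mathbb Z\otimes_M F(\omega)=\colim_n F(\mathbf n)_{\Sigma_n}$ (cf.\ Lemma~\ref{rakvere}) is correct, so your transfer argument has nothing to act on.

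Second, the semistability fallback fails. By \cite{Sch08}, semistability is equivalent to $M$ acting trivially on the naive homotopy groups; it is not implied by their being concentrated in one degree. If $\mathbb ZF^{E,\Sigma}_*(X,Y)\otimes\mathbb Q$ were semistable, you would get $\wt\pi_0=\pi_0=\mathbb QF^E(X,Y)$. But Corollary~\ref{naivevstruecor} gives $\wt\pi_0=\mathbb Q\otimes_M\mathbb QF^E(X,Y)$, which is in general a proper quotient because permutations act nontrivially on framed correspondences. Besides, ``the $M$-action on a rational tame module has no higher homology'' is exactly the statement to be proved, so it cannot be used as the reason.

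If you want a self-contained proof, apply Maschke to the underived functor $\colim_I$, not to a description of its derived functors.
\begin{enumerate}
\item One has $\mathbb Z\otimes_M F(\omega)\cong\colim_I F=\colim_n F(\mathbf n)_{\Sigma_n}$, and $F\mapsto F(\omega)$ is exact, being a filtered colimit.
\item It therefore suffices that evaluation at $\omega$ sends the projective $I$-modules $\mathbb Z[I(\mathbf n,-)]$ to $\Tor^{\mathbb Z[M]}_*(\mathbb Z,-)$-acyclic modules. Their values are $\mathbb Z[\mathrm{inj}(\mathbf n,\omega)]$.
\item The translation category of the $M$-set $\mathrm{inj}(\mathbf n,\omega)$ is equivalent to the submonoid $M_n\cong M$ of maps fixing $\mathbf n$ pointwise: all objects are isomorphic via bijections of $\omega$. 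Hence its higher homology is that of $BM$, which vanishes by the lemma just quoted.
\item Consequently $\Tor_*^{\mathbb Z[M]}(\mathbb Z,F(\omega))\cong L_*\colim_I F$.
\item Over $\mathbb Q$ the functor $\colim_I$ is exact, since $\Sigma_n$-coinvariants are exact and filtered colimits are exact. So its higher derived functors vanish, which is the required statement.
\end{enumerate}
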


\begin{proof}
This follows from the fact that $\Tor^{\bb Z[M]}_p(\bb Q,W)=0$ for every tame $M$-module $W$ and all $p\geq 1$ ---
see~\cite[Example~5.7]{Sch08}. It remains to apply Corollary~\ref{M-module}.
\end{proof}

We are now in a position to prove the main result of the section.

\begin{thm}\label{rakushka}
Suppose $k$ is a perfect field. Let $E$ be a symmetric motivic Thom 
$T$-spectrum with contractible alternating group action and let $A$ be an Abelian group.
There is a strongly convergent spectral sequence
   $$E^2_{s,t}=H_{E}^{-s,q}(U,A,\Tor_t)\Longrightarrow H_{E}^{-s-t,q}(U,A),\quad q\in\bb Z,$$
relating $E$-framed motivic cohomology to $(E,\Tor_t)$-framed motivic cohomology.
\end{thm}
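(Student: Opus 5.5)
The plan is to realize both sides of the spectral sequence as homotopy sheaves of a single motivic $S^1$-spectrum built from symmetric spectra. Then I will run Schwede's spectral sequence~\eqref{schwede}, which compares naive and true homotopy groups, together with a hypercohomology spectral sequence.

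Concretely, I would fix $q\geq 0$ first; the case $q<0$ follows by applying $\uhom(\bb G^{\w |q|},-)$ throughout. I would consider the presheaf of symmetric $S^1$-spectra
$$\mathcal S:=C_*\bigl(\bb ZF^{E,\Sigma}_*(-,\gmp^{\w q})\otimes A\bigr),$$
obtained by applying the Suslin complex levelwise. This is a simplicial object in symmetric spectra, realized diagonally.

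\textbf{Step 1.} Compute the naive homotopy of $\mathcal S$. The naive stable homotopy presheaves of $\bb ZF^{E,\Sigma}_*(-,\gmp^{\w q})\otimes A$ are concentrated in degree $0$ and equal $\bb ZF^E(\gmp^{\w q},A)$. Passing to $C_*$ and using Lemma~\ref{lapierre}, the naive homotopy of $\mathcal S$ is $\pi_n=H_n(C_*\bb ZF^E(\gmp^{\w q},A))$. This is $\bb A^1$-invariant and quasi-stable, so its Nisnevich sheafification is strictly $\bb A^1$-invariant by the main result of~\cite{GP4}.

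\textbf{Step 2.} Compute the true homotopy of $\mathcal S$. Applying~\eqref{schwede} degreewise in the simplicial direction, it collapses as in the proof of Proposition~\ref{R-module}, identifying true homotopy with $\bb ZF^E_{\tor}(t,\gmp^{\w q},A)$. Hence the true homotopy of the realization is governed by $C_*\bb ZF^E_{\tor}(t,\gmp^{\w q},A)$, with one column for each $t$.

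\textbf{Step 3.} Set up the comparison in the true stable homotopy category of symmetric spectra. There, $\mathcal S$ is modelled by $\bigoplus_t$-filtered pieces: the Postnikov-type filtration by true homotopy has layers $C_*\bb ZF^E_{\tor}(t,\gmp^{\w q},A)[t]$. After sheafifying and fibrant replacement in the Nisnevich topology, this filtration gives a tower whose layers compute $H^{-s,q}_E(U,A,\Tor_t)$, since $\bb Z F^E_{\tor}$ are stable $\bb ZF_*(k)$-presheaves by Proposition~\ref{R-module} and their $C_*$ sheafify to strictly $\bb A^1$-invariant complexes. On the other hand, the abutment is identified with $H^{-s-t,q}_E(U,A)$. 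The reason is that the underlying $S^1$-spectrum, that is, the homotopy colimit over the stabilization defining true homotopy, is the Eilenberg--MacLane object of the total complex; in the naive direction this is $C_*\bb ZF^E(\gmp^{\w q},A)$ after the change-of-rings/Tor identification. The spectral sequence is then the one of the filtered object, as in the proof of Theorem~\ref{ovechkin}(3), via~\cite[Corollary~6.1.1]{FS}.

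\textbf{Step 4.} Establish strong convergence. This follows as in Theorem~\ref{ovechkin}(3): for $U$ of finite Krull dimension $d$, Nisnevich cohomological dimension bounds the terms by Morel's connectivity~\cite[Lemma~4.3.1]{Mor}, while the filtration degree $t\geq 0$ is bounded below.

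The main obstacle is Step 3: pinning down which of the two homotopies, naive or true, plays the role of the abutment $H_E^{*,q}(U,A)$, and making the change-of-rings isomorphism between $\bb Z\otimes^{\mathbf L}_{M}$ applied to the complex and the hypercohomology precise. I would first try to avoid symmetric spectra altogether. The idea is to take a free $M$-resolution $F_*\to\bb Z$ with $F_0=\bb Z[M]$ and form the double complex $F_*\otimes_M C_*\bb ZF^E(\gmp^{\w q},A)_{\nis}$. Its two filtrations give the desired $E^2$-term in one direction, using that $\Tor_t$ commutes with $C_*$ and sheafification. In the other direction they give the identification with $C_*\bb ZF^E$, provided the higher Tor contributions assemble correctly. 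The hypercohomology spectral sequence of this double complex of sheaves then yields the statement.
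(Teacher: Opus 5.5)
There is a genuine gap, and it sits at the step you flag as the main obstacle. Your fallback plan has the same skeleton as the paper's proof: a free resolution $F_*\to\bb Z$ over $\bb Z[M]$ with $F_0=\bb Z[M]$, the double complex $F_*\otimes_M C_*\bb ZF^E(\gmp^{\w q},A)$, a filtration by resolution degree, Nisnevich sheafification, and \cite[Corollary~6.1.1]{FS}. However, the identification of the abutment is left as ``provided the higher Tor contributions assemble correctly''.

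The total complex $\Tot(F_*\otimes_M C_*\bb ZF^E(\gmp^{\w q},A))$ computes $\bb Z\otimes^{\mathbf L}_M C_*\bb ZF^E(\gmp^{\w q},A)$. Its other spectral sequence has $E^2_{s,t}=\Tor^{\bb Z[M]}_s(\bb Z,H_t(C_*\bb ZF^E(\gmp^{\w q},A)))$, and nothing formal makes this collapse to $H_t(C_*\bb ZF^E(\gmp^{\w q},A))$. Before applying $C_*$ it plainly does not: $\bb Z\otimes_M\bb ZF^E$ is the coinvariant quotient $\wt{\bb Z}F^E$.

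The missing idea is that $M$ acts \emph{trivially} on the homology presheaves of $C_*\bb ZF^E(\gmp^{\w q},A)$. This is exactly where the hypothesis that $E$ has contractible alternating group action is needed, and your proposal never invokes it. The argument runs as follows:
\begin{enumerate}
\item Even permutations act on $E_n$ by maps $\bb A^1$-homotopic to the identity, and $C_*$ turns $\bb A^1$-homotopies into chain homotopies. So \cite[Corollary~2.13]{GN} shows that even permutations act trivially on $H_*(C_*\bb ZF^E_n(\gmp^{\w q},A))$.
\item For the tame $M$-module obtained as the colimit of this $I$-functor, this forces all of $M$ to act trivially \cite[Example~3.2]{Sch08}.
\item Consequently $\bb Z\otimes_M H_t=H_t$ and $\Tor^{\bb Z[M]}_{s>0}(\bb Z,H_t)=0$ \cite[Lemma~5.3(ii)]{Sch08}.
\item Hence the canonical map $C_*\bb ZF^E(\gmp^{\w q},A)\to\Tot(F_*\otimes_M C_*\bb ZF^E(\gmp^{\w q},A))$ is a sectionwise quasi-isomorphism. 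Only this identifies the abutment with $H^{-s-t,q}_E(U,A)$.
\end{enumerate}
In your symmetric-spectrum formulation, the same issue is the semistability of $\mathcal S$. Its true homotopy equals its naive homotopy $H_*(C_*\bb ZF^E(\gmp^{\w q},A))$ exactly when \eqref{schwede} for $\mathcal S$ collapses, which again is the triviality of the $M$-action.

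Two smaller points. First, in Step~3 the Postnikov filtration by true homotopy is the wrong filtration: its layers are Eilenberg--MacLane objects of the abutment, not $C_*\bb ZF^E_{\tor}(t,\gmp^{\w q},A)$. Likewise, neither column filtration of the double complex produces the hypercohomology groups $H^{-s,q}_E(U,A,\Tor_t)$ as $E^2$-term. What works is the tower $\cc L_t=\Tot(C_*(\tau_{\geq t}(F_*\otimes_M\bb ZF^E(\gmp^{\w q},A))))$ of canonical truncations in the resolution direction. Here $\cc L_t$ is $(t-1)$-connected, and its $t$-th layer is quasi-isomorphic to $C_*\bb ZF^E_{\tor}(t,\gmp^{\w q},A)$ shifted up by $t$ homological degrees. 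Second, in Step~4, what \cite[Corollary~6.1.1]{FS} needs is that the connectivity of the sheafified, fibrantly replaced tower evaluated at $U$ grows, not merely that $t\geq 0$. Here that holds because the homology of $\cc L_t^f(U)$ vanishes below $t-\dim U$.
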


\begin{proof}
By Corollary~\ref{M-module} $C_*\bb ZF^E(\bb G^{\w q},A)$ is a chain complex of presheaves of $\bb Z[M]$-modules.
Denote by $\cc K_0=F_*\otimes_M\bb ZF^E(\bb G^{\w q},A)$ and
$\cc L_0=\Tot(F_*\otimes_MC_*\bb ZF^E(\bb G^{\w q},A))$, where $F_*\to\bb Z$ is a resolution of $\bb Z$
by free $\bb Z[M]$-modules with $F_0=\bb Z[M]$. If we regard $\bb ZF^E(\bb G^{\w q},A)$ as a $\bb Z[M]$-module
concentrated in zeroth degree, the canonical map of chain complexes of $\bb Z[M]$-modules
$\kappa:\bb ZF^E(\bb G^{\w q},A)\to\cc K_0$ induces a map of chain complexes
   $$\lambda:C_*\bb ZF^E(\bb G^{\w q},A)\to\cc L_0.$$
   
We claim that $\lambda$ is a sectionwise quasi-isomorphism of chain complexes of Abelian groups.
Due to our assumption that $E$ is a symmetric motivic Thom $T$-spectrum with contractible alternating group action
as well as the fact that $\bb A^1$-homotopies become the usual ones after applying the Suslin complex $C_*$,
it follows from~\cite[Corollary~2.13]{GN} that even permutations from $\Sigma_n$ act trivially on the homology presheaves of
$C_*\bb ZF^E_n(\bb G^{\w q},A)$. Therefore the monoid $M$ acts trivially on the homology presheaves of
$C_*\bb ZF^E(\bb G^{\w q},A)$ due to~\cite[Example~3.2]{Sch08}. As $F_a$ is a free $\bb Z[M]$-module for every $a\geq 0$,
the same is true for the homology presheaves of the chain complex $F_a\otimes_M C_*\bb ZF^E(\bb G^{\w q},A)$
as $H_*(F_a\otimes_M C_*\bb ZF^E(\bb G^{\w q},A))=F_a\otimes_MH_*(C_*\bb ZF^E(\bb G^{\w q},A))$.
By~\cite[Lemma~5.3(ii)]{Sch08} the groups $\Tor^{\bb Z[M]}_{s>0}(\bb Z,H_*(C_*\bb ZF^E(\bb G^{\w q},A))$ 
vanish. The convergent spectral sequence for double complexes~\cite[Lemma ~12.25.3]{Stack} 
takes the form
   $$E^2_{s,t}=H_s(H_t(F_*\otimes_MC_*\bb ZF^E(\bb G^{\w q},A)))=
       \Tor^{\bb Z[M]}_s(\bb Z,H_t(C_*\bb ZF^E(\bb G^{\w q},A)))\Longrightarrow H_{s+t}(\cc L_0).$$
The spectral sequence now implies $\lambda$ is a quasi-isomorphism, as claimed.

There is a tower of chain complexes
   \begin{equation*}\label{ttower}
    \cdots\to\cc L_2\to\cc L_1\to\cc L_0,
   \end{equation*}
where each $\cc L_i=\Tot(C_*(\tau_{\geq i}\cc K_0))$ with $\tau_{\geq i}\cc K_0$ being the $i$-th
truncation of $\cc K_0$ --- see~\cite[Section~12.15]{Stack}. By construction, $\cc L_i$ is $(i-1)$-connected.
We have $\coker(\tau_{\geq i+1}\cc K_0\to\tau_{\geq i}\cc K_0)$ equals the complex
$\im d_{i+1}\hookrightarrow\kr d_i$ placed in the $(i+1)$-th and $i$-th degrees. The latter complex is quasi-isomorphic to
$H_i(\cc K_0)=\Tor^{\bb Z[M]}_i(\bb Z,\bb ZF^E(\bb G^{\w q},A))$ placed in the $i$-th degree. Thus
$C_*(H_i(\cc K_0))=C_*\bb Z^E_{\tor}(i,\bb G^{\w q},A)[-i]$ (the shift is homological) is sectionwise quasi-isomorphic to
the $i$-th layer of the tower above. After sheafifying this tower in the Nisnevich topology, one gets a tower
      \begin{equation}\label{dtower}
       \cdots\to\cc L_{2,\nis}\to\cc L_{1,\nis}\to\cc L_{0,\nis},
   \end{equation}      
where each $\cc L_{i,\nis}$ is $(i-1)$-connected as a chain complex of Nisnevich sheaves. By construction,
the $i$-th layer is quasi-isomorphic to the chain complex $C_*\bb Z^E_{\tor}(i,\bb G^{\w q},A)_{\nis}[-i]$ (the shift is homological).
By Proposition~\ref{R-module} the latter is a chain complex of stable $\bb ZF_*(k)$-sheaves.
As $k$ is perfect this complex has strictly $\bb A^1$-invariant cohomology sheaves, and hence it is 
$\bb A^1$-local by~\cite[Theorem~6.2.7]{Mor}. It follows that the local fibrant replacement 
$C_*\bb Z^E_{\tor}(i,\bb G^{\w q},A)_{\nis}^f$ is motivically fibrant.

The tower~\eqref{dtower} induces a tower of motivically fibrant complexes
      \begin{equation}\label{mtower}
       \cdots\to\cc L_{2}^{mf}\to\cc L_{1}^{mf}\to\cc L_{0}^{mf}.
   \end{equation}  
As $\cc L_{i,\nis}$ is locally $(i-1)$-connected, so is $\cc L_i^{mf}$ by Morel's
Connectivity Theorem~\cite{Mor}.
If $U\in\Sm_k$ is of dimension $d$, then homology groups of the chain complex
$\cc L_i^{mf}(U)$ vanish in degrees smaller than $i-d$
by~\cite[Lemma~4.3.1]{Mor}. By~\cite[Corollary~6.1.1]{FS} the tower~\eqref{mtower}
produces a strongly convergent spectral sequence
   $$E^2_{s,t}=H_{s+t}(C_*\bb Z^E_{\tor}(t,\bb G^{\w q},A)_{\nis}^f(U)[-t])\Longrightarrow H_{s+t}(\cc L_{0}^{mf}(U)).$$
As we have shown above $\cc L_0$ is sectionwise quasi-isomorphic to $C_*\bb ZF^E(\bb G^{\w q},A)$. So
$\cc L_{0,\nis}$ is locally quasi-isomorphic to $C_*\bb ZF^E(\bb G^{\w q},A)_{\nis}$ and we may assume that $\cc L_{0}^{mf}=\cc L_{0,\nis}^f$.
We see that 
   $$H_{s+t-q}(\cc L_{0,\nis}^f(U))\cong H_{\nis}^{-s-t}(U,A^E(q))=H_{E}^{-s-t,q}(U,A)$$
and
   \begin{multline*}
    H_{s+t-q}(C_*\bb Z^E_{\tor}(t,\bb G^{\w q},A)_{\nis}^f(U)[-t])=H_{s-q}(C_*\bb Z^E_{\tor}(t,\bb G^{\w q},A)_{\nis}^f(U))\cong\\
    \cong H_{\nis}^{-s}(U,A^E(t,q))=H_{E}^{-s,q}(U,A,\Tor_t).
   \end{multline*}
This completes the proof of the theorem.
\end{proof}

\subsection{Computing rational framed motives}

Recall that $E$-framed motives (of motivic Thom spa\-ces) with rational coefficients can be computed as the motivic complexes $C_*{\bb Q}F^E(Y/V)$.
By~\cite[Example~5.7]{Sch08} $\Tor^{\bb Z[M]}_p(\bb Q,W)=0$ for every tame $M$-module $W$ and all $p\geq 1$.
If we consider $A=\bb Q$, the proof of preceding theorem shows that the complex $\cc K_0\otimes\bb Q$ is quasi-isomorphic
to the presheaf $\wt{\bb Q}F^E(\gmp^{\w q})=\bb Z\otimes_M{\bb Q}F^E(\gmp^{\w q})$ regarded as a complex concentrated in the zeroth degree.
The proof also implies that the morphism of complexes
   $$\lambda:C_*{\bb Q}F^E(\gmp^{\w q})\to C_*\wt{\bb Q}F^E(\gmp^{\w q})$$
is a sectionwise quasi-isomorphism. Replacing $E$ by $E\w Y/V$, $Y\in\Sm_k$ and $V\subset Y$ is open, we thus obtain the following
important computation.

\begin{thm}\label{shell}
Let $E$ be a symmetric motivic Thom 
$T$-spectrum with contractible alternating group action. Suppose $Y\in\Sm_k$ and $V\subset Y$ is open. Then
the natural morphism of complexes of $\bb QF_*(k)$-presheaves
   $$\lambda:C_*{\bb Q}F^E(Y/V)\to C_*\wt{\bb Q}F^E(Y/V)$$
is a quasi-isomorphism. If $k$ is perfect then the natural morphism of complexes of $\bb QF_*(k)$-sheaves
   $$\lambda:{\bb Q}^E({ q})\to \wt{\bb Q}^E({ q})=\bb Q^E_{\tor}(0,q)$$
is a quasi-isomorphism.
\end{thm}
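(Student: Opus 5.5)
The plan is to rerun the argument of Theorem~\ref{rakushka} with $A=\bb Q$, with $E$ replaced by the Thom spectrum $E\w Y/V$, so that $\bb G^{\w q}$ becomes $Y/V$. First we check that $E\w Y/V$ is still a symmetric motivic Thom $T$-spectrum with contractible alternating group action. The permutations act only on the $E_n$-factor, so the $\bb A^1$-homotopies for $E$ smash with $Y/V$. By Corollary~\ref{M-module}, $C_*\bb QF^E(Y/V)$ is then a complex of presheaves of tame $M$-modules. Fix a free resolution $F_*\to\bb Z$ of $M$-modules with $F_0=\bb Z[M]$, and put
\[
\cc K_0=F_*\otimes_M\bb QF^E(Y/V),\qquad \cc L_0=\Tot(F_*\otimes_M C_*\bb QF^E(Y/V)).
\]
We then have the map $\lambda\colon C_*\bb QF^E(Y/V)\to\cc L_0$ exactly as before.

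Step one: show that $\cc K_0$ is quasi-isomorphic to $\bb Z\otimes_M\bb QF^E(Y/V)=\wt{\bb Q}F^E(Y/V)$ concentrated in degree zero. Its homology is $\Tor_p^{\bb Z[M]}(\bb Z,\bb QF^E(Y/V))$. For $p\geq1$ this group vanishes because the module is tame and rational (\cite[Example~5.7]{Sch08}, as in Lemma~\ref{torsionfr}). The same holds after applying $C_*$ levelwise, since $C_*$ is exact in the presheaf variable and commutes with $F_a\otimes_M-$. This identifies $\cc L_0$ with $C_*\wt{\bb Q}F^E(Y/V)$ up to sectionwise quasi-isomorphism.

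Step two: show that $\lambda\colon C_*\bb QF^E(Y/V)\to\cc L_0$ is a sectionwise quasi-isomorphism. We use the argument of Theorem~\ref{rakushka}. By \cite[Corollary~2.13]{GN}, the contractible alternating action makes even permutations act trivially on the homology presheaves of $C_*\bb QF^E_n(Y/V)$. By \cite[Example~3.2]{Sch08}, $M$ then acts trivially on $H_*(C_*\bb QF^E(Y/V))$, so by \cite[Lemma~5.3(ii)]{Sch08} the higher Tor groups vanish. The double complex spectral sequence then collapses to $E^2_{0,t}=H_t(C_*\bb QF^E(Y/V))$. Composing the two identifications gives the map $\lambda$ of the theorem, $C_*\bb QF^E(Y/V)\to C_*\wt{\bb Q}F^E(Y/V)$, as a quasi-isomorphism. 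We still have to check that this composite is the natural map, i.e.\ the one induced by the quotient $\bb QF^E\to\bb Z\otimes_M\bb QF^E$. This holds because the augmentation $F_0=\bb Z[M]\to\bb Z$ induces both maps. Compatibility with framed transfers follows from Proposition~\ref{R-module} applied with $R=S_T$.

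Step three is the sheaf statement. Take $Y/V=\gmp^{\w q}$ for $q\geq0$, Nisnevich-sheafify and shift by $[-q]$. A sectionwise quasi-isomorphism stays a quasi-isomorphism after sheafification, and this gives $\bb Q^E(q)\simeq\wt{\bb Q}^E(q)=\bb Q^E_{\tor}(0,q)$. For $q<0$ both sides are defined as $\uhom(\bb G^{\w|q|},-)$ applied to the $q=0$ case. Here perfectness of $k$ enters: both complexes consist of stable $\bb ZF_*(k)$-sheaves, using Proposition~\ref{R-module}, so their cohomology sheaves are strictly $\bb A^1$-invariant. Hence they are $\bb A^1$-local by \cite[Theorem~6.2.7]{Mor}, and the internal Hom from $\bb G^{\w|q|}$ preserves the quasi-isomorphism.

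The main obstacle is step two, specifically transporting the trivial-action statement of \cite[Corollary~2.13]{GN} from $\bb G^{\w q}$ to a general $Y/V$. The homotopies live on $E_n$, so smashing with $Y/V$ should cause no difficulty.
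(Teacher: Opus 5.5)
Your proposal is correct and follows the paper's own argument. You rerun the proof of Theorem~\ref{rakushka} with $A=\bb Q$, use \cite[Example~5.7]{Sch08} to identify $\cc K_0\otimes\bb Q$ (hence $\cc L_0$) with $\wt{\bb Q}F^E$ in degree zero, combine this with $\lambda\colon C_*\bb QF^E\to\cc L_0$ being a quasi-isomorphism, and reach general $Y/V$ by replacing $E$ with $E\w Y/V$; the differences are cosmetic. You start from $Y/V$ and specialize to $\gmp^{\w q}$, which is not literally of that form but is handled verbatim as in Theorem~\ref{rakushka}, and you treat $q<0$ explicitly where the paper leaves it implicit.
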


\begin{cor}[Cancellation]\label{shellcor}
Let $k$ be perfect and let $E$ be a symmetric motivic Thom 
$T$-spectrum with contractible alternating group action and the bounding constant $d\leq 1$.
The natural morphism of complexes of sheaves
   $$\gamma:C_*\wt{\bb Q}F^E(Y/V)_{\nis}\to\uhom(\gmp, C_*\wt{\bb Q}F^E(Y/V\w\gmp)_{\nis})$$
is a quasi-isomorphism. In particular, the natural morphism of complexes of sheaves 
   $$\gamma:\wt{\bb Q}^E({ q})\to\uhom(\gmp, \wt{\bb Q}^E({ q}+1))[1]$$
is a quasi-isomorphism.
\end{cor}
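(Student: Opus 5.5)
The plan is to reduce the cancellation statement for permutation free correspondences to the known cancellation theorem for ordinary $E$-framed motives, by means of the quasi-isomorphism $\lambda$ of Theorem~\ref{shell}. Consider the commutative square of complexes of Nisnevich sheaves
\[
\xymatrix{C_*{\bb Q}F^E(Y/V)_{\nis}\ar[r]^-{\gamma}\ar[d]_{\lambda} & \uhom(\gmp, C_*{\bb Q}F^E(Y/V\w\gmp)_{\nis})\ar[d]^{\uhom(\gmp,\lambda)}\\
C_*\wt{\bb Q}F^E(Y/V)_{\nis}\ar[r]^-{\gamma} & \uhom(\gmp, C_*\wt{\bb Q}F^E(Y/V\w\gmp)_{\nis}).}
\]
The first task is to check that this square commutes. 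Both horizontal maps are induced by the same construction, namely smashing a correspondence with $\id_{\gmp}$. The map $\lambda$ is the quotient by the $M$-action. So commutativity amounts to compatibility of $-\w\gmp$ with the $I$-functor structure of Lemma~\ref{rakvere}: permutations act on the $\bb P^{\w n}$, $E_n$ and $T^n$ factors and not on the $Y/V$ or $\gmp$ factors. This should be a direct check on representatives in $\Fr^E_n$.

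Next I handle the vertical arrows. By Theorem~\ref{shell}, the left one is a sectionwise quasi-isomorphism before sheafification, hence a local quasi-isomorphism after it. The right vertical arrow is obtained by applying Theorem~\ref{shell} with $Y/V$ replaced by $Y/V\w\gmp$. Here $\gmp$ is the cone of $\pt\to\bb G_m$, so $Y/V\w\gmp$ is presented, up to the splitting off of a summand, by the Thom space $(Y\times\bb G_m)/(V\times\bb G_m)$. Equivalently, one can use the direct summand decomposition $C_*F(Y/V\w\bb G_{m+}) = C_*F(Y/V)\oplus C_*F(Y/V\w\gmp)$, which $\lambda$ respects.

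Applying $\uhom(\gmp,-)$ requires more than a local quasi-isomorphism. My approach is to note that for $k$ perfect all the complexes involved have homotopy invariant, quasi-stable framed transfers. Their Nisnevich cohomology sheaves are therefore strictly $\bb A^1$-invariant by the theorem of~\cite{GP4} quoted in the introduction. Consequently $\uhom(\gmp,-)$ computed on local fibrant replacements preserves quasi-isomorphisms; this is the contraction $(-)_{-1}$, which is exact on strictly $\bb A^1$-invariant sheaves. I expect this interpretation of $\uhom(\gmp,-)$ as a derived contraction to be the step most in need of care.

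The top horizontal arrow is a quasi-isomorphism by the rational form of the cancellation theorem for $E$-framed motives, \cite[Lemma~9.7]{GN} together with the Linearization Theorem of~\cite{GNP}. These results use exactly the hypotheses that $E$ has bounding constant $d\leq 1$ and contractible alternating group action. By two-out-of-three, the bottom arrow is then a quasi-isomorphism.

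For the second assertion, take $Y/V=\gmp^{\w q}$ and shift. By definition $\wt{\bb Q}^E(q)=C_*\wt{\bb Q}F^E(\gmp^{\w q})_{\nis}[-q]$. The extra $[1]$ accounts for the shift $[-q-1]$ in $\wt{\bb Q}^E(q+1)$. For $q<0$ both sides are defined by $\uhom(\gmp^{\w|q|},-)$ of the $q=0$ case, so that case reduces formally to $q=0$.
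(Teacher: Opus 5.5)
\textbf{There is a gap at the step you flagged.} Your architecture matches the paper's: a square comparing $\gamma$ for $\bb QF^E$ and for $\wt{\bb Q}F^E$ via $\lambda$, Theorem~\ref{shell} for the vertical maps, framed cancellation for the $\bb QF^E$ row, and two-out-of-three.

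\emph{Where it breaks.} You draw the square with sheafified complexes, so you need $\uhom(\gmp,\lambda_{\nis})$ to be a quasi-isomorphism. But $\lambda_{\nis}$ is only a local quasi-isomorphism. The functor in the statement is the underived $\uhom(\gmp,-)$ applied to the non-fibrant complex $C_*\wt{\bb Q}F^E(Y/V\w\gmp)_{\nis}$. Its value at a henselian local $U$ is a summand of the complex of sections over the non-local scheme $U\times\bb G_m$.

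\emph{Why your justification does not close it.} Strict $\bb A^1$-invariance of the homology sheaves and exactness of $(-)_{-1}$ on strictly $\bb A^1$-invariant sheaves only describe the derived functor $\uhom(\gmp,(-)^f)$. They do not show that $\uhom(\gmp,K)\to\uhom(\gmp,K^f)$ is a local quasi-isomorphism for $K=C_*F_{\nis}$. Identifying the underived $\uhom(\gmp,-)$ with the derived one is exactly the missing input. It amounts to contraction commuting with Nisnevich sheafification for homotopy invariant presheaves with framed transfers, together with the comparison of $C_*F$ with its sheafification. The paper imports this from the proof of \cite[Theorem~A(2)]{AGP}. In your arrangement you would need it for the $\bb QF^E$-complex as well as for the $\wt{\bb Q}F^E$-complex.

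\emph{How the paper avoids this.} It draws the square for the unsheafified presheaf complexes. There $\uhom(\gmp,F)(U)$ is a natural direct summand of $F(U\times\bb G_m)$, so $\uhom(\gmp,\lambda)$ is a sectionwise quasi-isomorphism for free. The bottom row is a local quasi-isomorphism by \cite[Theorem~C]{AGP} and \cite[Proposition~9.12]{GN}. Two-out-of-three then gives the presheaf-level $\gamma$ as a local quasi-isomorphism. The AGP comparison is needed only once, for
$$\uhom(\gmp,C_*\wt{\bb Q}F^E(Y/V\w\gmp))\to\uhom(\gmp,C_*\wt{\bb Q}F^E(Y/V\w\gmp)_{\nis}).$$

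\emph{A smaller point.} Your citation for the cancellation row is off. The Linearization Theorem of \cite{GNP} and \cite[Lemma~9.7]{GN} compare $M_E$ with $LM_E$ rationally and say nothing about $\gmp$-loops. The cancellation input is \cite[Theorem~C]{AGP} together with \cite[Proposition~9.12]{GN}, using that $E\w Y/V$ still has bounding constant $d\leq 1$.
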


\begin{proof}
One has a commutative diagram
   $$\xymatrix{C_*\wt{\bb Q}F^E(Y/V)\ar[r]^(.35)\gamma&\uhom(\gmp, C_*\wt{\bb Q}F^E(Y/V\w\gmp))\\
                       C_*{\bb Q}F^E(Y/V)\ar[r]\ar[u]&\uhom(\gmp, C_*{\bb Q}F^E(Y/V\w\gmp))\ar[u]}$$
It follows from~\cite[Theorem~C]{AGP} and~\cite[Proposition~9.12]{GN} that the lower arrow is locally a quasi-isomorphism
(note that the spectrum $E\w Y/V$ has the bounding constant $d\leq 1$). The left vertical arrow is a
sectionwise quasi-isomorphism by Theorem~\ref{shell}, and hence so is the right vertical one. We see that
$\gamma$ is locally a quasi-isomorphism.

Next, the proof of~\cite[Theorem~A(2)]{AGP} shows that the map
   $$\uhom(\gmp, C_*\wt{\bb Q}F^E(Y/V\w\gmp))\to\uhom(\gmp, C_*\wt{\bb Q}F^E(Y/V\w\gmp)_{\nis})$$
is locally a quasi-isomorphism. The statement now follows.
\end{proof}


\section{Reconstructing rational stable motivic homotopy theory}\label{rational}

The goal of this section is to prove that permutation-free framed correspondences provide a 
complete description of rational stable motivic homotopy theory $SH(k)_{\bb Q}$. Building on 
the constructions of the previous section, we investigate the category of rational 
permutation-free framed correspondences and study the corresponding category of motives.
We prove in this section that this category reconstructs $SH(k)_{\bb Q}$.


Throughout this section $E$ is a symmetric motivic Thom  
$T$-spectrum with contractible alternating group action. 
Recall that for a morphism $f:X\rightarrow Y$ we denote by $\check{C}(f)$ or $\check{C}(Y/X)$ the Cech simplicial object defined by $f$.

\begin{prop}\label{thm4.4}
Let $U\rightarrow X$ be a Nisnevich covering of a scheme $X$. Then for any $n\geq 0$ and any local essentially $k$-smooth
henselian scheme $Y$ the map of pointed simplicial sets $F^E_{n}(Y,\check{C}(U/X))\rightarrow F^E_{n}(Y,X)$ is a weak equivalence.
\end{prop}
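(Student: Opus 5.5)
The plan is to adapt the argument for the analogous statement for ordinary framed correspondences (the Čech-descent property of $F_n$ on henselian local schemes in~\cite{GP1}, cf.\ also~\cite{GN}) to $E$-framed correspondences with connected support. First we make the simplicial set explicit. In simplicial degree $m$ the Čech object is $\check{C}(U/X)_m=U^{\times_X(m+1)}$. Hence $F^E_n(Y,\check{C}(U/X))_m$ is the pointed set of elements of $\uhom(\PP^{\w n},U^{\times_X(m+1)}_+\w E_n)(Y)$ whose support is connected. By Voevodsky's Lemma~\ref{Voevlemma}, and since $E_n=\colim_i V_{n,i}/(V_{n,i}-Z_{n,i})$ with filtered colimits commuting with sections over $Y\times\PP^{\w n}$-type objects, such an element is a triple $(W,Z,\phi)$. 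Here $Z\subset Y\times\bb A^n$ is closed and finite over $Y$ with $Z$ connected, $W$ is an étale neighbourhood of $Z$, and $\phi=(g,e):W\to U^{\times_X(m+1)}\times V_{n,i}$ satisfies $Z=e^{-1}(Z_{n,i})$. We work up to the usual germ equivalence.

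The key step is the henselian property. Because $Y$ is local henselian and $Z$ is finite over $Y$ and connected, $Z$ is itself a local henselian scheme, and we may shrink $W$ to its henselization along $Z$, i.e.\ $W=Z^h$, which is local henselian. Thus an $E$-framed correspondence with connected support from $Y$ to $X$ amounts to data $(Z,e)$ together with a morphism $g:Z^h\to X$. Lifting it to level $m$ of the Čech nerve means choosing $m+1$ lifts of $g$ along $U\to X$. The question is therefore fibred over the "base" set $F^E_n(Y,X)$. For a fixed $\Phi\in F^E_n(Y,X)\setminus 0$, the fibre of $F^E_n(Y,\check{C}(U/X))\to F^E_n(Y,X)$ over $\Phi$ is the Čech nerve of the set $L_\Phi=\Hom_X(Z^h,U)$, i.e.\ the simplicial set $m\mapsto L_\Phi^{m+1}$. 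The basepoint contributes only itself. One checks that face maps preserve the connected-support condition, since the support does not change, only the map to the target. This yields a decomposition
$$F^E_n(Y,\check{C}(U/X))\cong\bigvee_{\Phi\in F^E_n(Y,X)\setminus 0}\check{C}(L_\Phi)_+,$$
compatible with the map to the constant simplicial set $F^E_n(Y,X)=\bigvee_\Phi S^0$. Here we must be careful to verify that the germ equivalence relation in Lemma~\ref{Voevlemma} is compatible with passing to $Z^h$, namely that lifts over $Z^h$ correspond to lifts over some étale neighbourhood, modulo equivalence. This follows because $Z^h$ is the limit of étale neighbourhoods of $Z$ and $U\to X$ is locally of finite presentation.

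Finally, since $U\to X$ is a Nisnevich covering and $Z^h$ is local henselian, $g$ lifts to $U$. Its closed point maps to some $x\in X$, there is a point of $U$ over $x$ with the same residue field, and henselianity extends the section. Hence $L_\Phi\neq\emptyset$, and the Čech nerve of a nonempty set is contractible. Each wedge summand $\check{C}(L_\Phi)_+\to S^0$ is then a weak equivalence, and so is their wedge.

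The main obstacle is the middle step: justifying that connected support makes $W$ replaceable by a local henselian scheme, and keeping the germ equivalence honest in the colimit over $i$ for the Thom spaces $E_n$. The connectedness hypothesis is exactly what rules out supports with several components mapping to different points, which would not lift simultaneously in a controlled way.
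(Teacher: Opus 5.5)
Your proposal is correct and follows essentially the paper's argument. Both use Voevodsky's lemma to rewrite connected-support $E$-framed correspondences as maps out of the henselization $(\bb A^n\times Y)^h_Z$, which is local henselian because $Z$ is connected and finite over $Y$; the Nisnevich lifting property then makes the Čech nerve contractible. The only cosmetic difference is the splitting: the paper splits $F^E_n(Y,-)$ only over the framing data $(Z,\phi)$ and uses that the Čech nerve of the surjection $\Hom((\bb A^n\times Y)^h_Z,U)\to\Hom((\bb A^n\times Y)^h_Z,X)$ is homotopy equivalent to its target, whereas you split further over each point of that target and use contractibility of $m\mapsto L_\Phi^{m+1}$ for nonempty $L_\Phi$.
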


\begin{proof}
The proof is like that of~\cite[Theorem~4.4]{Voe2}.
Recall that $E_n=\colim_i V_{n,i}/(V_{n,i}-Z_{n,i})$ (see Section~\ref{mathssmass}). As
local equivalences are closed under filtered colimits, it suffices to prove the statement for spaces of the form $E_n=V/(V-W)$
with $V$ having a $\Sigma_n$-action.
The functor $F^E_{n}(Y,-)$ is a coproduct of functors of the form $F_{n}^{E,Z,\phi}(Y,-)$, where $Z$ is a connected closed subset of 
$\mathbb{A}_Y^n$ finite over $Y$, and $\phi\colon (\bb A^n\times Y)^h_Z\to V$ is a map 
such that $Z=\phi^{-1}(W)$. Therefore it is sufficient to show that for any $Z$ and $\phi$ the map
   \begin{equation}\label{eq4.4.1}
    F_{n}^{E,Z,\phi}(Y,\check{C}(U/X))\rightarrow F_{n}^{E,Z,\phi}(Y,X)
   \end{equation}
is a weak equivalence. Note that for any $S\in\Sm_k$ the set $F_n^{E,Z,\phi}(Y,S)$
is isomorphic to the set $\Hom_{\Sm'_k}((\bb A^n\times Y)^h_Z,S)$.
It follows that $F_n^{E,Z,\phi}(Y,-)$ commutes with fiber products and
   \[ F_{n}^{E,Z,\phi}(Y,\check{C}(U/X))=\check{C}(F_{n}^{E,Z,\phi}(Y,U)\rightarrow F_{n}^{E,Z,\phi}(Y,X)). \]
Since $Z$ is connected and finite over the henselian $Y$, then $Z$
is henselian and local, and hence $(\bb A^n\times Y)^h_Z$ is local. Thus the map is 
surjective, and hence~\eqref{eq4.4.1} is a simplicial homotopy equivalence by~\cite[Lemma~14.26.9]{Stack}.
\end{proof}

\begin{cor}\label{thm4.4cor}
Let $f : Y \rightarrow X$ be a Nisnevich covering of $X\in\Sm_k$. Then the following sequence of Nisnevich sheaves with transfers is exact:
\[
0 \leftarrow{ (\mathbb{Q}}F_n^E(X)/\Sigma_n)_{\nis} \xleftarrow{\ f_* \ }{ (\mathbb{Q}}F_n^E(Y)/\Sigma_n)_{\nis} \xleftarrow{ (p_2)_* - (p_1)_* }
{ (\mathbb{Q}}F^E_n(Y \times_X Y)/\Sigma_n)_{\nis} \xleftarrow{ (p_{23})_* - (p_{13})_* + (p_{12})_* } \cdots
\]
\end{cor}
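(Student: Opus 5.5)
The plan is to deduce the corollary from Proposition~\ref{thm4.4} in three moves: pass to stalks, linearize, and then take $\Sigma_n$-coinvariants, using that $\bb Q$ is flat and $\Sigma_n$ is finite.

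\emph{Reduction to stalks.} Exactness of a complex of Nisnevich sheaves can be checked on stalks. The stalks are the values of the presheaves at local essentially smooth henselian schemes $Y'$, since all presheaves in question commute with the relevant filtered limits. Moreover, sheafification commutes with taking $\Sigma_n$-coinvariants, because coinvariants are a colimit. So it is enough to show that for each such $Y'$ the augmented complex of abelian groups
\[
\cdots\to\bb QF^E_n(Y',Y\times_XY)/\Sigma_n\to\bb QF_n^E(Y',Y)/\Sigma_n\to\bb QF_n^E(Y',X)/\Sigma_n\to 0
\]
is acyclic.

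\emph{Linearization.} By Proposition~\ref{thm4.4}, the map of pointed simplicial sets $F^E_n(Y',\check C(Y/X))\to F^E_n(Y',X)$ is a weak equivalence. Its proof gives more: the map is a simplicial homotopy equivalence compatible with the splitting into summands indexed by $(Z,\phi)$. On each summand, the Čech nerve of a surjection of sets is augmented contractible. Applying the reduced free $\bb Q$-vector space functor, the Dold--Kan normalized (alternating-sum) complex of $\bb QF^E_n(Y',\check C(Y/X))$, augmented to $\bb QF^E_n(Y',X)$, is therefore acyclic. Concretely, on each summand, the summand's component of $\bb QF^E_n(Y',-)$ is $\bb Q[\Hom((\bb A^n\times Y')^h_Z,-)]$. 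The Čech complex of a surjection of sets $S\to T$ admits the standard contracting homotopy obtained from a section $T\to S$. Since $(\bb A^n\times Y')^h_Z$ is henselian local, a section exists, namely a lift of $\phi$'s target map through the Nisnevich cover.

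\emph{Passing to coinvariants.} The $\Sigma_n$-action given by~\eqref{sigmaact} is natural in the target, so the augmented Čech complex is a complex of $\bb Q[\Sigma_n]$-modules. Since $|\Sigma_n|$ is invertible in $\bb Q$, the functor $(-)_{\Sigma_n}$ is exact: it is naturally isomorphic to the invariants, and equals the image of the idempotent $\frac1{n!}\sum\tau$. Exact functors preserve acyclicity, which gives exactness at each stalk and hence the claim. Compatibility with transfers is automatic from functoriality in the source variable.

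The main obstacle I expect is the contracting homotopy, since the section must be chosen compatibly only summand by summand. The $\Sigma_n$-action permutes summands and need not preserve the chosen sections. I avoid this by never requiring an equivariant homotopy: I only use acyclicity of the underlying complex and then the exactness of coinvariants over $\bb Q$. A secondary point to check is that $\bb QF_n^E$, defined via connected supports, is indeed the free module on the summands appearing in Proposition~\ref{thm4.4}, and that the alternating-sum differentials in the statement are the Dold--Kan differentials.
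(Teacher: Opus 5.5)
Your proposal is correct and follows the paper's argument. You use Proposition~\ref{thm4.4} to get acyclicity of the linearized \v{C}ech complex at henselian local stalks, then finish with exactness of $\Sigma_n$-coinvariants on rational representations, spelling out the stalk reduction and linearization steps that the paper's two-line proof leaves implicit.
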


\begin{proof}
The functor of coinvariants $(-)/\Sigma_n$ is exact for rational representations. The preceding proposition now implies the claim.
\end{proof}

Recall that a \emph{Nisnevich square\/} is a cartesian diagram of $k$-smooth schemes of the form
\[
\begin{array}{ccc}
W & \longrightarrow & V \\
\downarrow & & \downarrow \lefteqn{p} \\
U & \xrightarrow{\ j \ } & X
\end{array}
\]
where $j : U \hookrightarrow X$ is an open immersion, $p : V \rightarrow X$ is an \'etale morphism, and
the induced map on the reduced complements $p^{-1}(X \setminus U)_{\text{red}} \rightarrow (X \setminus U)_{\text{red}}$ is an isomorphism.

\begin{cor}\label{nissquare}
For any Nisnevich square there is a short exact sequence of Nisnevich sheaves
   $$0\to\wt{ \mathbb{Q}}F^E(W)_{\nis}\to\wt{ \mathbb{Q}}_{\nis}F^E(U)_{\nis}\oplus\wt{ \mathbb{Q}}F^E(V)_{\nis}\to\wt{ \mathbb{Q}}F^E(X)_{\nis}\to 0.$$
\end{cor}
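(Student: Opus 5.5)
We deduce the statement from Corollary~\ref{thm4.4cor}, applied to the Nisnevich covering $f\colon U\sqcup V\to X$ determined by the square. The argument has three steps: reduce to finite level $n$, identify the \v{C}ech complex of $f$ with the Mayer--Vietoris sequence of the square, and pass to the colimit over $n$.

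\textbf{Step 1: reduction to finite level.} By Lemma~\ref{rakvere} (tensored with $\bb Q$) we have
$$\wt{\bb Q}F^E(S)=\colim_n {\bb Q}F_n^E(S)/\Sigma_n$$
for every $S\in\Sm_k$. Sheafification and filtered colimits are exact and commute. It therefore suffices to prove, for each fixed $n$, that the sequence
$$0\to({\bb Q}F_n^E(W)/\Sigma_n)_{\nis}\to({\bb Q}F_n^E(U)/\Sigma_n)_{\nis}\oplus({\bb Q}F_n^E(V)/\Sigma_n)_{\nis}\to({\bb Q}F_n^E(X)/\Sigma_n)_{\nis}\to 0$$
is exact, naturally in $n$, i.e.\ compatibly with the stabilization maps. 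The maps are the usual ones: the first is $(i_*,-p'_*)$ and the second is $j_*+p_*$.

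\textbf{Step 2: surjectivity and the \v{C}ech complex.} Surjectivity on the right is the first stage of Corollary~\ref{thm4.4cor} for the covering $f$. For exactness in the middle and on the left, we compare the \v{C}ech complex of $f$ with the Mayer--Vietoris complex. Write $Y=U\sqcup V$; then
$$Y\times_XY=U\sqcup W\sqcup W'\sqcup V\times_XV,\qquad W'\cong W,$$
since $U\times_XU=U$ and $U\times_XV=W$. In the Nisnevich square, the closed complement of $U$ lifts isomorphically to $V$. Hence $V\times_XV$ decomposes as $\Delta(V)\sqcup R$ with $R$ lying over $U$, and indeed $R\subset W\times_UW$.

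The cleanest route is to follow Voevodsky's argument for sheaves with transfers. Consider the presheaf $G$ that is the cokernel of $F_W\to F_U\oplus F_V$, where $F_S:={\bb Q}F_n^E(S)/\Sigma_n$. The goal is to show that $(\ker(F_W\to F_U\oplus F_V))_{\nis}=0$ and that $G_{\nis}\to (F_X)_{\nis}$ is an isomorphism. Both claims can be checked on henselian local $Y'$. For such $Y'$ we use the decomposition of $F_n^E(Y',-)$ into summands $F_n^{E,Z,\phi}(Y',-)\cong\Hom((\bb A^n\times Y')^h_Z,-)$, exactly as in the proof of Proposition~\ref{thm4.4}, where $(\bb A^n\times Y')^h_Z$ is local henselian. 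For a local henselian scheme $S$, the functor $\Hom(S,-)$ sends the Nisnevich square to a pushout square of sets that is injective on $W\to V$. Indeed, the closed point of $S$ maps either into $U$, so that all of $S$ maps into $U$, or into the complement, in which case the map lifts uniquely to $V$. Consequently the free $\bb Q$-vector spaces on these sets form a short exact Mayer--Vietoris sequence.

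Summing over the summands $(Z,\phi)$ gives exactness before taking coinvariants. The group $\Sigma_n$ acts compatibly by permuting the summands, so $\Sigma_n$-coinvariants, which are exact over $\bb Q$, preserve exactness. This step is exactly where rationality is used, as in Corollary~\ref{thm4.4cor}. Since all constructions are natural in $n$, Step~1 then concludes.

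\textbf{Main obstacle.} The main difficulty is the claim in Step~2 that $\Hom(S,-)$ turns the Nisnevich square into a pushout with injective left map when $S$ is local henselian but only essentially smooth. I expect this to follow from the standard lifting property of étale neighbourhoods over henselian local schemes. One must still check that the lift from $S$ to $V$ is unique when the closed point maps to the complement, which uses the isomorphism of reduced complements together with the étaleness of $p$.
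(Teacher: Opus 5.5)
Your proposal is correct and is essentially the paper's argument: check exactness stalkwise at a henselian local $Y$, decompose $F^E_n(Y,-)$ into summands $\Hom((\bb A^n\times Y)^h_Z,-)$ represented by local henselian schemes to get a pushout (coequalizer) of sets with injective left map, linearize, take $\Sigma_n$-coinvariants (exact over $\bb Q$), and pass to the colimit over $n$ via Lemma~\ref{rakvere}. The \v{C}ech-complex comparison at the start of Step~2 is an unneeded detour, while your explicit henselian lifting argument (existence and uniqueness of the lift to $V$ when the closed point maps outside $U$) supplies the justification that the paper leaves implicit.
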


\begin{proof}
The proof of Proposition~\ref{thm4.4} shows that there is a coequalizer of pointed sets
   $$F^E_n(Y,W)\rightrightarrows F^E(Y,U)\vee F_n^E(Y,V)\to F_n^E(Y,X),\quad n\geq 0,$$
where $Y$ is a local essentially $k$-smooth henselian scheme. It induces a right exact sequence of Abelian groups
   $$\bb ZF^E_n(Y,W)\to\bb ZF_n^E(Y,U)\oplus\bb ZF_n^E(Y,V)\to\bb ZF_n^E(Y,X)\to 0.$$
Observe that the left arrow of the latter sequence is a monomorphism. 
As the functor of coinvariants $(-)/\Sigma_n$ is exact for rational representations, one gets a short exact sequence
   $$0\to\bb QF^E_n(Y,W)/\Sigma_n\to\bb QF^E_n(Y,U)/\Sigma_n\oplus\bb QF_n^E(Y,V)/\Sigma_n\to\bb QF_n^E(Y,X)/\Sigma_n\to 0.$$
By Lemma~\ref{rakvere} it induces an exact sequence
   $$0\to\wt{ \mathbb{Q}}F^E(Y,W)\to\wt{ \mathbb{Q}}F^E(Y,U)\oplus\wt{ \mathbb{Q}}F^E(Y,V)\to\wt{ \mathbb{Q}}F^E(Y,X)\to 0.$$
Our statement now follows.
\end{proof}

In the remainder of the section $E$ is assumed to be a symmetric Thom commutative ring $T$-spectrum with the bounding 
constant $d\leq 1$ and contractible alternating group action and the base field $k$ is perfect. For instance, $E$ 
is the algebraic cobordism $T$-spectrum
$MGL$ or motivic sphere spectrum $S_T=(S^0,T,T^2,\ldots)$. 
By Theorem~\ref{catcor} we have a symmetric monoidal additive category
$\wt{ \mathbb{Q}}F^E(k)$. The following corollary is proven similarly to~\cite[Corollary~4.5]{Voe2}.

\begin{cor}\label{cor4.5}
Let $F$ be a $\wt{ \mathbb{Q}}F^E(k)$-presheaf. The associated sheaf in the Nisnevich topology has a 
unique structure of a $\wt{ \mathbb{Q}}F^E(k)$-presheaf such that the map $F\rightarrow F_{\nis}$ is a map of $\wt{ \mathbb{Q}}F^E(k)$-presheaves.
\end{cor}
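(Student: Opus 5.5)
The plan is to follow Voevodsky's argument for~\cite[Corollary~4.5]{Voe2}, with $\wt{\bb Q}F^E(k)$ in place of correspondences and Corollary~\ref{thm4.4cor} in place of~\cite[Theorem~4.4]{Voe2}. First I would reduce to representables. A $\wt{\bb Q}F^E(k)$-presheaf structure on a presheaf $G$ amounts to a compatible family of maps
$$\wt{\bb Q}F^E(X,Y)\otimes G(Y)\to G(X),$$
equivalently a morphism of presheaves $\wt{\bb Q}F^E(-,Y)\to\uhom(G(Y)\text{-values})$. In Yoneda form, for each $Y$ and each section $s\in F_{\nis}(Y)$ we must produce a morphism of presheaves $\wt{\bb Q}F^E(-,Y)\to F_{\nis}$ extending $s$ along $Y\mapsto\wt{\bb Q}F^E(-,Y)$, and then check associativity and unitality.

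Second, I would build this morphism using a Nisnevich cover $f:U\to Y$ on which $s$ comes from some $t\in F(U)$ with $p_1^*t-p_2^*t$ vanishing locally on $U\times_YU$. Refining further, I can assume $p_1^*t=p_2^*t$ in $F$ after passing to a cover of $U\times_YU$; working with the Čech nerve, this gives a compatible family. The $\wt{\bb Q}F^E(k)$-structure on $F$ produces maps $\wt{\bb Q}F^E(-,U^{\times_Y j})\to F$. Sheafifying, and using Corollary~\ref{thm4.4cor} together with Lemma~\ref{rakvere} (which identifies $\wt{\bb Q}F^E$ with $\colim_n\bb QF^E_n/\Sigma_n$) to pass the exact Čech sequences to the colimit, I obtain that
$$\wt{\bb Q}F^E(U\times_YU)_{\nis}\rightrightarrows\wt{\bb Q}F^E(U)_{\nis}\to\wt{\bb Q}F^E(Y)_{\nis}\to0$$
is exact. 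Since $F_{\nis}$ is a sheaf, the map $\wt{\bb Q}F^E(-,U)_{\nis}\to F_{\nis}$ induced by $t$ equalizes the two arrows, so it descends uniquely to $\wt{\bb Q}F^E(-,Y)_{\nis}\to F_{\nis}$. Precomposing with $\wt{\bb Q}F^E(-,Y)\to\wt{\bb Q}F^E(-,Y)_{\nis}$ gives the action of correspondences on $s$.

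Third, I would verify the remaining properties. Independence of the choice of $(U,t)$ follows by passing to a common refinement. Additivity and $\bb Q$-linearity in $s$ are clear. Compatibility with composition in $\wt{\bb Q}F^E(k)$ follows from uniqueness of the descended map, since both composites agree after restriction along covers. The map $F\to F_{\nis}$ is a homomorphism of $\wt{\bb Q}F^E(k)$-presheaves by construction, taking $U=Y$. For uniqueness, any such structure must send $s$ to the descended map, because $\wt{\bb Q}F^E(-,U)_{\nis}\to\wt{\bb Q}F^E(-,Y)_{\nis}$ is an epimorphism of sheaves and the action on $t$ is forced by $F\to F_{\nis}$.

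The main obstacle is the exactness step. Corollary~\ref{thm4.4cor} is stated levelwise for $\bb QF^E_n/\Sigma_n$, and passing to $\wt{\bb Q}F^E$ requires Lemma~\ref{rakvere} together with exactness of filtered colimits and the commutation of sheafification with colimits. I also need the stabilization maps to be compatible with the Čech differentials; this holds by functoriality of $\iota_*$ in the target.
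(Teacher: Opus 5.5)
Your route is the one the paper intends. The paper simply invokes Voevodsky's argument for \cite[Corollary~4.5]{Voe2}, with Corollary~\ref{thm4.4cor} (passed to $\wt{\bb Q}F^E$ via Lemma~\ref{rakvere}) in place of \cite[Theorem~4.4]{Voe2}. The colimit/exactness step you single out as the main obstacle is in fact harmless.

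The step that genuinely needs more is the claim that the map $\wt{\bb Q}F^E(-,U)_{\nis}\to F_{\nis}$ induced by $t$ equalizes the two arrows ``since $F_{\nis}$ is a sheaf''. Its two composites with $(p_1)_*,(p_2)_*$ are the sheafifications of $a\mapsto a^*(p_1^*t)$ and $a\mapsto a^*(p_2^*t)$. These depend on the sections $p_i^*t\in F(U\times_YU)$ themselves, not merely on their (equal) images in $F_{\nis}(U\times_YU)$. Knowing that $x=p_1^*t-p_2^*t$ vanishes locally does not, by sheafness of the target alone, tell you that $a^*x$ vanishes locally for an arbitrary $a\in\wt{\bb Q}F^E(V,U\times_YU)$. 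A correspondence into $U\times_YU$ need not factor through the cover on which $x$ dies; it does so only locally on its source, and that is a nontrivial input.

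The missing ingredient is local liftability of correspondences along a cover \emph{other} than $U\to Y$. Choose a Nisnevich cover $h\colon W'\to U\times_YU$ with $h^*x=0$ in $F(W')$. The two composites then agree on $\wt{\bb Q}F^E(-,W')$. The degree-zero part of Corollary~\ref{thm4.4cor} (plus Lemma~\ref{rakvere}), applied to $h$, shows that $\wt{\bb Q}F^E(-,W')_{\nis}\to\wt{\bb Q}F^E(-,U\times_YU)_{\nis}$ is an epimorphism, so the composites agree on $\wt{\bb Q}F^E(-,U\times_YU)_{\nis}$. Equivalently, you should isolate and prove the lemma that $\ker(F\to F_{\nis})$ is a $\wt{\bb Q}F^E(k)$-subpresheaf.

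The same fact is silently needed for your independence-of-$(U,t)$ step, since two representatives agree on $U\times_YU'$ only in $F_{\nis}$. Local lifting along arbitrary covers is also what makes ``agree after restriction along covers'' work in your composition check. With that lemma made explicit, the rest of your argument, including uniqueness, goes through.
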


It follows from Corollaries~\ref{nissquare} and~\ref{cor4.5} that $\wt{ \mathbb{Q}}F^E(k)$ is a strict $V$-category of correspondences
in the sense of~\cite{GG19}. For such categories of correspondences we can apply Voevodsky's construction of the triangulated
category of motives (see~\cite{GG19} for details). Namely, we first localize the derived category of $\wt{ \mathbb{Q}}F^E(k)$-sheaves
$D(Sh(\wt{ \mathbb{Q}}F^E(k)))$ with respect to the localizing subcategory $\cc L$
generated by complexes of the form
   $$\cdots\to 0\to\wt{ \mathbb{Q}}F^E(-,X\times\bb A^1)_{\nis}\xrightarrow{pr_X}\wt{ \mathbb{Q}}F^E(-,X)_{\nis}\to 0\to\cdots,\quad X\in\Sm_k.$$
The resulting quotient category $D(Sh(\wt{ \mathbb{Q}}F^E(k)))/\cc L$ is denoted by
$D_{\bb A^1}(Sh(\wt{ \mathbb{Q}}F^E(k)))$.

If we denote by $DM_{E}^{\eff}(k)_{\bb Q}$ the full subcategory of $D(Sh(\wt{ \mathbb{Q}}F^E(k)))$ consisting of
the complexes with strictly $\bb A^1$-invariant cohomology sheaves, then similar to a theorem of Voevodsky~\cite{Voe00}
the composite functor
   $$DM_{E}^{\eff}(k)_{\bb Q}\hookrightarrow D(Sh(\wt{ \mathbb{Q}}F^E(k)))\to D_{\bb A^1}(Sh(\wt{ \mathbb{Q}}F^E(k)))$$
is an equivalence of triangulated categories. Moreover, the functor
   $$C_*:D(Sh(\wt{ \mathbb{Q}}F^E(k)))\to D(Sh(\wt{ \mathbb{Q}}F^E(k))),\quad X\mapsto \Tot(X(-\times\Delta^\bullet)),$$
lands in $DM_{E}^{\eff}(k)_{\bb Q}$. The kernel of $C_*$ is $\cc L$ and $C_*$ is left adjoint to the inclusion
functor
   $$i:DM_{E}^{\eff}(k)_{\bb Q}\hookrightarrow D(Sh(\wt{ \mathbb{Q}}F^E(k)))$$
(see~\cite{Voe00} for details or~\cite[Theorem~3.5]{GP14}). In the special case when $E=S_T$ we will write $DM_{fr}^{\eff}(k)_{\bb Q}$
instead of $DM_{S_T}^{\eff}(k)_{\bb Q}$.

Following~\cite[Section~5]{GG23} we can consider a symmetric monoidal spectral category $\cc O^E$ whose objects are those of $\Sm_k$ with
symmetric spectra of morphisms given by
   $$\cc O^E(X,Y):=(\Fr_0^{E,\Sigma}(X,Y),\Fr_1^{E,\Sigma}(X,Y\otimes S^1),\ldots).$$
Following~\cite{GP12} we define the Nisnevich local
model structure on $\Mod\cc O^E$. Its homotopy category, denoted by $D\cc O^{E,\eff}(k)$, is closed symmetric monoidal 
compactly generated triangulated with compact
generators being the representables $\{\cc O^E(-,X)\mid X\in\Sm_k\}$.
The monoidal product $\cc O^E(-,X)\wedge \cc O^E(-,Y)$ in $D\cc O^{E,\eff}(k)$
is isomorphic to $\cc O^E(-,X\times Y)$ --- see~\cite[Theorem~6.11]{GG23}.
The category $D\cc O^{E,\eff}(k)$ plays the same role as the derived category
$D(\textrm{Shv}_{tr}^{\nis}(\Sm_k))$ of cochain complexes of Nisnevich sheaves with transfers.

Recall from~\cite{Voe1} that Voevodsky's category of motives $DM^{\eff}(k)$ is the localisation of the category 
$D(\textrm{Shv}_{tr}^{\nis}(\Sm_k))$
with respect to the family $\{\bb Z_{tr}(-,X\times\bb A^1)\to\bb Z_{tr}(-,X)\mid X\in\Sm_k\}$. It is
equivalent to the full subcategory of $D(\textrm{Shv}_{tr}^{\nis}(\Sm_k))$ consisting of
cochain complexes with homotopy invariant cohomology sheaves~\cite{Voe1}.
Likewise, we can localize $\mathsf{Mod}\cc O^E$ equipped with the Nisnevich local model structure with respect to the maps 
$\{\cc O^E(-,X\times\bb A^1)\to\cc O^E(-,X)\mid X\in\Sm_k\}$.
Denote by $D\cc O^{E,\eff}_{\mot}(k)$ its homotopy category. By~\cite[Theorem~6.12]{GG23} the
category $D\cc O^{E,\eff}_{\mot}(k)$ is equivalent to the full triangulated subcategory, denoted by $DE^{\eff}(k)$,
of $D\cc O^{E,\eff}(k)$ consisting of modules with homotopy invariant sheaves of stable homotopy groups.
The inclusion $DE^{\eff}(k)\to D\cc O^{E,\eff}(k)$ has a right adjoint
$C_*$ taking a module $M\in D\cc O^{E,\eff}(k)$ to its Suslin complex $C_*(M)$. Moreover,
there is a triangulated equivalence of compactly generated triangulated categories
   $$DE^{\eff}(k)\simeq\Mod^{\eff}_{SH(k)}E,$$
where $\Mod^{\eff}_{SH(k)}E$ is the full triangulated subcategory of $\Mod_{SH(k)}E$ of $E$-modules which are 
also effective $T$-spectra~\cite[Corollary~6.13]{GG23}.

There is a natural map of spectral categories
   $$\alpha:\cc O^E\to H(\wt{\pi}_0(\cc O^E)),$$
where $H(\wt{\pi}_0(\cc O^E))$ is the Eilenberg--Mac~Lane spectral category associated with the ringoid
$\wt{\pi}_0(\cc O^E)$. It follows from Corollary~\ref{naivevstruecor} that $\wt{\pi}_0(\cc O^E)$ equals $\wt{\bb Z}F^E(k)$.
Denote by $HE_{\bb Q}:=H(\wt{\bb Q}F^E(k))$. The map $\alpha$ induces a pair of adjoint functors
   \begin{equation}\label{pair}
    F:\Mod\cc O^E\rightleftarrows\Mod HE_{\bb Q}:U
   \end{equation}
Note that $F$ is strong monoidal.

It follows from Corollary~\ref{nissquare} that the spectral category $HE_{\bb Q}$ is Nisnevich excisive in the sense of~\cite{GP12}.
Therefore we can define the motivic model structure on $\Mod HE_{\bb Q}$ whose homotopy category will be denoted by $DHE_{\mot}^{\eff}(k)_{\bb Q}$.
The pair $(F,U)$~\eqref{pair} is a Quillen pair with respect to the motivic model structure. Therefore one has a pair
of adjoint functors
   $$F:D\cc O_{\mot}^{E,\eff}(k)\rightleftarrows DHE_{\mot}^{\eff}(k)_{\bb Q}:U$$
Moreover, $F$ is strict monoidal.
By~\cite[Section~6]{GP12} there is a pair of triangulated equivalences between triangulated categories
   $$L:DHE_{\mot}^{\eff}(k)_{\bb Q}\rightleftarrows D_{\bb A^1}(Sh(\wt{ \mathbb{Q}}F^E(k))):R$$
Moreover, $L$ is strict monoidal.

We are now in a position to prove the following theorem.

\begin{thm}\label{reconstreff}
There is a symmetric monoidal triangulated equivalence between triangulated categories with rational coefficients
   $$DE^{\eff}(k)\otimes\bb Q\simeq DM_E^{\eff}(k)_{\bb Q}.$$
In particular, $\Mod^{\eff}_{SH(k)}E\otimes\bb Q$ is equivalent to $DM_E^{\eff}(k)_{\bb Q}$ and 
$SH^{\eff}(k)\otimes\bb Q$ is equivalent to $DM_{fr}^{\eff}(k)_{\bb Q}$.
\end{thm}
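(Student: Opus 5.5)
We would prove the equivalence by composing the functors already assembled above. On the source side, \cite[Theorem~6.12]{GG23} identifies $DE^{\eff}(k)$ with $D\cc O^{E,\eff}_{\mot}(k)$. On the target side, the equivalence $L,R$ of \cite[Section~6]{GP12} together with Voevodsky's theorem identifies $DHE^{\eff}_{\mot}(k)_{\bb Q}$ with $D_{\bb A^1}(Sh(\wt{\bb Q}F^E(k)))\simeq DM^{\eff}_E(k)_{\bb Q}$. Since $F$ and $L$ are strict monoidal, it suffices to show that the rationalized derived functor
$$LF\otimes\bb Q:D\cc O^{E,\eff}_{\mot}(k)\otimes\bb Q\to DHE^{\eff}_{\mot}(k)_{\bb Q}$$
is an equivalence. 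Here the left side is understood as the localization at rationally acyclic objects, or equivalently as modules over $\cc O^E\wedge H\bb Q$.

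Both categories are compactly generated. The images $F(\cc O^E(-,X)_{\bb Q})=HE_{\bb Q}(-,X)$ of the compact generators $\cc O^E(-,X)$, $X\in\Sm_k$, are themselves compact generators on the right. So by the standard criterion for compactly generated triangulated categories, it is enough to prove full faithfulness on generators: for all $X,Y\in\Sm_k$ and $n\in\bb Z$, the map
$$\Hom(\cc O^E(-,X),\cc O^E(-,Y)[n])\otimes\bb Q\to\Hom(HE_{\bb Q}(-,X),HE_{\bb Q}(-,Y)[n])$$
should be an isomorphism. Via the $C_*$ descriptions of both motivic categories, the left side is $H^{-n}$ of the Nisnevich-local fibrant replacement of $C_*(\cc O^E(-,Y))(X)\otimes\bb Q$. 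The right side is the hypercohomology $H^{-n}_{\nis}(X,C_*\wt{\bb Q}F^E(Y)_{\nis})$.

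The key input is the comparison of presheaves of stable homotopy groups. By Lemma~\ref{sup}, the naive homotopy groups of $\cc O^E(-,Y)$ are $\bb ZF^E(-,Y)\otimes\pi_q^s$, which are torsion for $q>0$. Rationally only $q=0$ survives, and the Schwede spectral sequence~\eqref{schwede} together with Lemma~\ref{torsionfr} (vanishing of $\Tor_{p\ge1}$ rationally) then gives
$$\wt\pi_*(\cc O^E(-,Y))\otimes\bb Q=\bb Z\otimes_M\bb QF^E(-,Y)=\wt{\bb Q}F^E(-,Y)$$
concentrated in degree $0$. Hence $\alpha:\cc O^E(-,Y)_{\bb Q}\to HE_{\bb Q}(-,Y)$ is a sectionwise stable equivalence. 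Applying $C_*$ and Lemma~\ref{lapierre} (geometric realization preserves stable equivalences), the map $C_*\cc O^E(-,Y)_{\bb Q}\to C_*\wt{\bb Q}F^E(Y)$ is a sectionwise equivalence. Alternatively, Theorem~\ref{shell} compares both sides with $C_*\bb QF^E(Y)$.

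The main obstacle is checking that this comparison lands in the correct local objects on both sides. The targets of $C_*$ must be motivically fibrant after Nisnevich sheafification, so that the hypercohomology computes morphisms in the motivic categories. On the $\cc O^E$ side this is \cite[Theorem~6.12]{GG23}, using strict $\bb A^1$-invariance for perfect $k$. On the $\wt{\bb Q}F^E$ side it follows from Corollary~\ref{cor4.5} and the strict $V$-category structure, via Corollary~\ref{nissquare}. One must also verify that the local model structure interacts well with rationalization; I would handle this by noting that filtered colimits (the telescope for $\otimes\bb Q$) commute with Nisnevich sheafification and with $C_*$.

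With full faithfulness established, the equivalence follows, and monoidality is inherited from $F$ and $L$. The two particular cases then follow from \cite[Corollary~6.13]{GG23}, which gives $DE^{\eff}(k)\simeq\Mod^{\eff}_{SH(k)}E$, and from the case $E=S_T$, where $\Mod^{\eff}_{SH(k)}S_T=SH^{\eff}(k)$.
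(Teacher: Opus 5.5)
Your proof is correct. It has the same skeleton as the paper's argument: reduce to $F\otimes\bb Q$ between the motivic module categories, check that generators go to generators, compare Hom-groups between generators, and invoke the compact-generation criterion \cite[Lemma~4.8]{GJ}. The key comparison of homotopy presheaves, however, is done differently.

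The paper never looks at the true homotopy of $\cc O^E(X,Y)$ beyond $\wt\pi_0$. It passes through the non-symmetric framed motive via the zigzag $LM_E(X)\otimes\bb Q\leftarrow M_E(X)\otimes\bb Q\to M^\Sigma_E(X)\otimes\bb Q$, using the Linearization Theorem of \cite{GNP} and \cite[Lemma~6.5]{GG23}. It then uses Theorem~\ref{shell} to replace $C_*\bb QF^E(X)$ by $C_*\wt{\bb Q}F^E(X)$; that theorem's proof uses the contractible alternating group action to make $M$ act trivially on $H_*(C_*\bb ZF^E)$.

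You instead apply Schwede's spectral sequence~\eqref{schwede} directly to the morphism spectra $\cc O^E(U,Y)$. Lemma~\ref{sup} makes the rows $q>0$ torsion, and rational vanishing of $\Tor^{\bb Z[M]}_{p\geq 1}$ on tame modules collapses the row $q=0$. Hence $\alpha\otimes\bb Q$ is already a sectionwise stable equivalence of spectral categories before $C_*$ is applied.

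This is more economical and proves slightly more. It bypasses the linearization theorem and Theorem~\ref{shell}. It also shows that $\Mod\cc O^E$ and $\Mod HE_{\bb Q}$ agree rationally even before $\bb A^1$-localization, so the motivic statement follows once both $C_*$-replacements are known to be motivically fibrant after Nisnevich replacement. What the paper's route buys is an explicit identification of the rational generators with the linear framed motives $C_*\bb QF^E(X)$, i.e.\ with rational $E$-framed motivic cohomology. That ties the theorem to the computations of Section~\ref{nikishin}.

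Two small points to tidy up:
\begin{enumerate}
\item Say explicitly that $\wt\pi_0(\alpha)$ is an isomorphism by construction, since $\alpha$ is the map to the Eilenberg--Mac~Lane spectral category of $\wt\pi_0$. Together with your vanishing of the higher rational true homotopy groups, this is what makes $\alpha\otimes\bb Q$ a stable equivalence.
\item Lemma~\ref{lapierre} is stated for $S^1$-spectra. For $C_*$ of symmetric spectra you need its analogue in the stable model structure on symmetric spectra, which has the same proof.
\end{enumerate}
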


\begin{proof}
Due to arguments above all we need to prove is that the pair~\eqref{pair} induces an equivalence of 
compactly generated triangulated categories
   $$F:D\cc O_{\mot}^{E,\eff}(k)\otimes\bb Q\rightleftarrows DHE_{\mot}^{\eff}(k)_{\bb Q}:U$$
By~\cite[Theorem~6.7]{GG23} $D\cc O_{\mot}^{E,\eff}(k)$ is compactly generated triangulated with compact
generators being the symmetric $E$-framed motives $\{M^\Sigma_E(X)\mid X\in\Sm_k\}$.
By~\cite[Lemma~6.5]{GG23} the natural map $M_E(X)\to M^\Sigma_E(X)$ is a stable equivalence. Thus
one has a zigzag of sectionwise stable equivalences
   $$LM_E(X)\otimes\bb Q\xleftarrow{\sim}M_E(X)\otimes\bb Q\lra{\sim}M^\Sigma_E(X)\otimes\bb Q,$$ 
where $LM_E(X)$ is the linear $E$-framed motive in the sense of~\cite[Section~9]{GN}. Its presheaves
of stable homotopy groups are computed as the presheaves of homology groups of the complex $C_*\bb ZF^E(X)$.
By Theorem~\ref{shell} $C_*\bb QF^E(X)$ is sectionwise quasi-isomorphic to $C_*\wt{\bb Q}F^E(X)=C_*\wt{\bb Z}F^E(X)\otimes\bb Q$.

Likewise, $DHE_{\mot}^{\eff}(k)_{\bb Q}$ is compactly generated triangulated with compact
generators $\{M_{HE_{\bb Q}}(X)\mid X\in\Sm_k\}$, where $M_{HE_{\bb Q}}(X)=C_*(HE_{\bb Q}(-,X))$. The
presheaves of stable homotopy groups of $M_{HE_{\bb Q}}(X)$ are computed as the presheaves of homology groups of the complex
$C_*\wt{\bb Q}F^E(X)$.

It follows that $F$ takes compact generators to compact generators. Moreover, $F$ induces isomorphisms between
$D\cc O_{\mot}^{E,\eff}(k)\otimes\bb Q(M^\Sigma_E(X),M^\Sigma_E(Y)[i])$ and
$DHE_{\mot}^{\eff}(k)_{\bb Q}(M_{HE_{\bb Q}}(X),M_{HE_{\bb Q}}(Y)[i])$. By~\cite[Lemma~4.8]{GJ} this is enough to conclude that 
$F$ is a triangulated equivalence.
\end{proof}

We can stabilize our constructions in the $\gmp$-direction.
Denote by $-\boxtimes\gmp$ the endofunctor 
   $$\cc X\in\mathsf{Mod}\cc O^E\mapsto\cc X(\bb G_m^{\wedge 1})=\cc X\wedge_{\cc O^E}\cc O^E(-,\gmp).$$
Following Hovey~\cite[Section~8]{H}, we consider the stable motivic model structure on $\gmp$-symmetric spectra 
$Sp^\Sigma(\mathsf{Mod}\cc O^E,\gmp)$ (we start with the motivic stable model structure
on $\mathsf{Mod}\cc O^E$). Its homotopy category is denoted by $\mathsf{SH}_{S^1,\bb G_m}\cc O^E$.

By~\cite[Corollary~6.9]{GG23} there is
a triangulated equivalence of compactly generated triangulated categories
   $$\mathsf{SH}_{S^1,\bb G_m}\cc O^E\simeq\Mod_{SH(k)}E,$$
where $\Mod_{SH(k)}E$ is the category of $E$-modules in $SH(k)$. Note that $\mathsf{SH}_{S^1,\bb G_m}\cc O^E$
is equivalent to the category $DE(k)$ obtained from $DE^{\eff}(k)$ by stabilising the $\gmp$-direction. 
We define the triangulated category $DM_E(k)_{\bb Q}$ in a similar fashion. It is obtained from $DM^{\eff}_E(k)_{\bb Q}$
by stabilising in the $\gmp$-direction. In the special case when $E=S_T$ we will write $DM_{fr}(k)_{\bb Q}$
instead of $DM_{S_T}(k)_{\bb Q}$.

We finish the section by reconstructing stable motivic homotopy
theory with rational coefficients out of permutation free rational framed correspondences.

\begin{thm}[Reconstruction]\label{reconstr}
There is a symmetric monoidal triangulated equivalence between triangulated categories with rational coefficients
   $$DE(k)\otimes\bb Q\simeq DM_E(k)_{\bb Q}.$$
In particular, the category $\Mod_{SH(k)}E\otimes\bb Q$ is equivalent to $DM_E(k)_{\bb Q}$ and 
$SH(k)\otimes\bb Q$ is equivalent to $DM_{fr}(k)_{\bb Q}$.
\end{thm}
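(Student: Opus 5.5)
The plan is to deduce the stable statement from the effective one, Theorem~\ref{reconstreff}, by stabilizing in the $\gmp$-direction on both sides in a compatible way. Concretely, I would work with the Quillen pair~\eqref{pair}, rationalized, and pass to $\gmp$-symmetric spectra: on the left we have the stable motivic model structure on $Sp^\Sigma(\mathsf{Mod}\cc O^E,\gmp)$ with homotopy category $\mathsf{SH}_{S^1,\bb G_m}\cc O^E\simeq DE(k)$; on the right we take $Sp^\Sigma(\Mod HE_{\bb Q},\gmp)$ with the stable motivic model structure, the $\gmp$-suspension being $-\wedge_{HE_{\bb Q}}HE_{\bb Q}(-,\gmp)$. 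Since $F$ is strong monoidal and $F(\cc O^E(-,\gmp))=HE_{\bb Q}(-,\gmp)$, $F$ commutes with $\gmp$-suspension, so by Hovey~\cite[Section~8]{H} it prolongs levelwise to a symmetric monoidal Quillen pair between the categories of $\gmp$-symmetric spectra. Via the equivalence $L$ of~\cite[Section~6]{GP12}, the homotopy category on the right is identified with the $\gmp$-stabilization of $D_{\bb A^1}(Sh(\wt{\bb Q}F^E(k)))\simeq DM^{\eff}_E(k)_{\bb Q}$, i.e.\ with $DM_E(k)_{\bb Q}$.

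Next I would show the derived prolonged functor $F$ is an equivalence after rationalization. Both sides are compactly generated, with generators $\Sigma^\infty_{\gmp}M^\Sigma_E(X)[-n]$ shifted by $\gmp^{\w -n}$ (respectively $\Sigma^\infty_{\gmp}M_{HE_{\bb Q}}(X)$ and desuspensions), $X\in\Sm_k$, $n\geq 0$, and $F$ sends generators to generators. So by~\cite[Lemma~4.8]{GJ} it suffices to show $F$ induces isomorphisms on morphism groups between generators. These groups are colimits over $m$ of effective Hom groups between $M(X\w\gmp^{\w m})$ and $M(Y\w\gmp^{\w m+n})[i]$; Theorem~\ref{reconstreff} gives isomorphisms at each stage, compatible with the transition maps because $F$ is monoidal, hence an isomorphism on colimits. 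Better still, I would use the cancellation results to say the colimit is already attained at stage $0$: on the left by~\cite[Theorem~C]{AGP} and~\cite[Proposition~9.12]{GN} (the bounding constant $d\leq 1$), and on the right by Corollary~\ref{shellcor}. Then $\Omega_{\gmp}$-spectra on both sides are determined by their zeroth terms and the comparison reduces directly to Theorem~\ref{reconstreff}.

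Finally, specializing $E$ gives the stated consequences. Monoidality follows from strong monoidality of $F$ and $L$ together with the symmetric monoidal structure on $\gmp$-symmetric spectra. The identification $DE(k)\otimes\bb Q\simeq\Mod_{SH(k)}E\otimes\bb Q$ comes from~\cite[Corollary~6.9]{GG23}, and for $E=S_T$ one has $\Mod_{SH(k)}S_T=SH(k)$.

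The main obstacle is the compatibility of stabilization with rationalization and with the equivalence $L$. One has to check that $\gmp$-stabilizing $D_{\bb A^1}(Sh(\wt{\bb Q}F^E(k)))$ agrees with the model-categorical stabilization of $\Mod HE_{\bb Q}$. One also has to justify that ``$\otimes\bb Q$'' commutes with forming the stable category; since the generators are compact, this is a colimit statement on Hom groups. I would handle both by working entirely on compact generators, where the Hom computations reduce via cancellation to the effective computation already done.
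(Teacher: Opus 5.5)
Your proposal is correct and is essentially the paper's argument. The paper also reduces, via~\cite[Lemma~4.8]{GJ}, to comparing compact generators: it uses Corollary~\ref{shellcor} together with~\cite[Lemma~3.6]{GG19} to get motivically fibrant $\gmp$-$\Omega$-spectra as generators of $DM_E(k)_{\bb Q}$, and the fibrant bispectra built from $\Omega_{\gmpn}M_E^\Sigma(X)_f$ as generators of $DE(k)$; their rational bigraded homotopy presheaves are then matched by the computation behind Theorem~\ref{reconstreff}, which is exactly your ``better still'' cancellation route, while your explicit prolongation of the Quillen pair~\eqref{pair} to $\gmp$-symmetric spectra just makes precise the functor the paper describes only as induced by the effective equivalence.
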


\begin{proof}
The equivalence $G:DE(k)\otimes\bb Q\to DM_E(k)_{\bb Q}$ is induced by the triangulated equivalence of
Theorem~\ref{reconstreff}. In more detail, similarly to the proof of Theorem~\ref{reconstreff} the fact that $G$
is an equivalence reduces to comparing Hom-sets between compact generators in both categories.
It follows from Corollary~\ref{shellcor} that the strict $V$-category of correspondences $\wt{\bb Q}F^E(k)=\wt{\bb Z}F^E(k)\otimes\bb Q$
satisfies the cancelation property in the sense of~\cite[Definition~3.5]{GG19}. It follows from~\cite[Lemma~3.6]{GG19} that
$DM_{E}(k)_{\bb Q}$ is compactly generated by motivically fibrant $\gmp$-complexes 
   \begin{equation}\label{almeria}
    (\uhom(\gmp^{\w n},C_*\wt{\bb Q}F^E(X)_f),\uhom(\gmp^{\w n},C_*\wt{\bb Q}F^E(X_+\w\gmp)_f),\ldots),\quad n\in\bb Z,
   \end{equation}
where the subscript ``$f$" refers to the injective resolution in the category of $\wt{\bb Q}F^E(k)$-sheaves.

The category $DE(k)$ is compactly generated by the following motivically fibrant bispectra
   $$(\Omega_{\gmpn}M_E^\Sigma(X)_f,\Omega_{\gmpn}M_E^\Sigma(X_+\wedge\gmp)_f,\Omega_{\gmpn} M_E^\Sigma(X_+\wedge\bb G^{\wedge 2})_f,\ldots),\quad n\in\bb Z,$$
where ``$f$'' refers to level local
fibrant replacements of motivic $S^1$-spectra. The proof of Theorem~\ref{reconstreff} shows that the presheaves of bigraded
stable homotopy groups~\eqref{bigraded} with rational coefficients of this bispectrum are isomorphic to those of~\eqref{almeria}.
By~\cite[Lemma~4.8]{GJ} this is enough to conclude that $G$ is an equivalence.
\end{proof}

\end{document}